\documentclass{gtpart}

\usepackage{amssymb}
\usepackage{amsthm}
\usepackage{amsmath}

\usepackage[normalem]{ulem}

\usepackage[usenames]{color}
\usepackage{xcolor}

\usepackage{tikz-cd}

\usepackage[shortlabels]{enumitem}

\usepackage{hyperref}

\theoremstyle{plain}
\newtheorem{proposition}{Proposition}[section]
\newtheorem{theorem}[proposition]{Theorem}
\newtheorem{lemma}[proposition]{Lemma}
\newtheorem{corollary}[proposition]{Corollary}
\newtheorem{observation}[proposition]{Observation}
\theoremstyle{definition}

\newtheorem{definition}[proposition]{Definition}
\theoremstyle{remark}
\newtheorem{remark}[proposition]{Remark}

\DeclareMathOperator{\Aut}{Aut}

\DeclareMathOperator{\SL}{SL}
\DeclareMathOperator{\GL}{GL}
\DeclareMathOperator{\SO}{SO}

\DeclareMathOperator{\PSL}{PSL}
\DeclareMathOperator{\PGL}{PGL}

\DeclareMathOperator{\End}{End}

\DeclareMathOperator{\image}{image} 
 
\DeclareMathOperator{\Spanset}{Span} 
\DeclareMathOperator{\Gr}{Gr} 
\DeclareMathOperator{\id}{id} 
 
\DeclareMathOperator{\CAT}{CAT}

\DeclareMathOperator{\Min}{Min}

\DeclareMathOperator{\relint}{rel-int}

\DeclareMathOperator{\Ec}{\mathcal{E}}

\DeclareMathOperator{\Lc}{\mathcal{L}}

\DeclareMathOperator{\Oc}{\mathcal{O}}

\DeclareMathOperator{\Ab}{\mathbb{A}}

\DeclareMathOperator{\Cb}{\mathbb{C}}

\DeclareMathOperator{\Hb}{\mathbb{H}}

\DeclareMathOperator{\Pb}{\mathbb{P}}
\DeclareMathOperator{\Rb}{\mathbb{R}}

\DeclareMathOperator{\Zb}{\mathbb{Z}}

\DeclareMathOperator{\gL}{\mathfrak{g}}

\newcommand{\abs}[1]{\left|#1\right|}

\newcommand{\norm}[1]{\left\|#1\right\|}


\begin{document}

\title[A higher rank rigidity theorem]{A higher rank rigidity theorem for convex real projective manifolds}

\author{Andrew Zimmer}
\address{Department of Mathematics, Louisiana State University, Baton Rouge, LA 70803 \newline Current Address: Department of Mathematics, University of Wisconsin-Madison, Madison, WI, 53706}
\email{amzimmer2@wisc.edu}
\date{\today}
\keywords{real projective structures, rank rigidity, symmetric spaces}
\subject{primary}{msc2010}{53C24}
\subject{secondary}{msc2010}{22E40, 53A20}

\begin{abstract} For convex real projective manifolds we prove an analogue of the higher rank rigidity theorem of Ballmann and Burns-Spatzier.

\end{abstract}

\maketitle

\section{Introduction}

A real projective structure on a $d$-manifold $M$ is an open cover $M = \cup_{\alpha} U_\alpha$ along with coordinate charts $\varphi_\alpha : U_{\alpha} \rightarrow \Pb(\Rb^{d+1})$  such that each transition function $\varphi_{\alpha} \circ \varphi_{\beta}^{-1}$ coincides with the restriction of an element in $\PGL_{d+1}(\Rb)$. A \emph{real projective manifold} is a manifold equipped with a real projective structure. 

An important class of real projective manifolds are the convex real projective manifolds, which are defined as follows. First, a subset $\Omega \subset \Pb(\Rb^{d+1})$ is called a \emph{properly convex domain} if there exists an affine chart which contains it as a bounded convex open set. In this case, the \emph{automorphism group of $\Omega$} is  
\begin{align*}
\Aut(\Omega) := \{ g \in \PGL_{d+1}(\Rb): g \Omega = \Omega\}.
\end{align*}
If $\Gamma\leq \Aut(\Omega)$ is a discrete subgroup that acts freely and properly discontinuously on $\Omega$, then the quotient manifold $\Gamma \backslash \Omega$ is called a \emph{convex real projective manifold}. Notice that local inverses to the covering map $\Omega \rightarrow \Gamma \backslash \Omega$ provide a real projective structure on the quotient. In the case when there exists a compact quotient the domain $\Omega$ is called \emph{divisible}. For more background see the expository papers~\cite{B2008,Q2010,L2014}.

When $d \leq 3$, the structure of closed convex real projective $d$-manifolds are very well understood thanks to deep work of Benz\'{e}cri~\cite{B1960}, Goldman~\cite{G1990}, and Benoist~\cite{B2006}. But when $d \geq 4$ their general structure is mysterious. 

In this paper we establish a dichotomy for convex real projective manifolds inspired by the theory of non-positively curved Riemannian manifolds. In particular, a compact Riemannian manifold $(M,g)$ with non-positive curvature is said to have \emph{higher rank} if every geodesic in the universal cover is contained in a totally geodesic subspace isometric to $\Rb^2$. Otherwise $(M,g)$ is said to have \emph{rank one}.  An important theorem of Ballmann~\cite{Ballmann1985} and Burns-Spatzier~\cite{BS1987a,BS1987b} states that every compact irreducible Riemannian manifold  with non-positive curvature and higher rank is a locally symmetric space. This foundational result reduces many problems about non-positively curved manifolds to the rank one case. Further, rank one manifolds possess very useful ``weakly hyperbolic behavior'' (see for instance~\cite{B1982,K1998}).

In the context of convex real projective manifolds, the natural analogue of isometrically embedded copies of $\Rb^2$ are properly embedded simplices, see Section~\ref{sec:PES} below, which leads to the following definition of higher rank. 

\begin{definition} \ \begin{enumerate}
\item A properly convex domain $\Omega \subset \Pb(\Rb^d)$ has \emph{higher rank} if for every $p,q \in \Omega$ there exists a properly embedded simplex $S \subset \Omega$ with $\dim(S) \geq 2$ and $[p,q] \subset S$. 
\item If a properly convex domain $\Omega \subset \Pb(\Rb^d)$ does not have higher rank, then we say that $\Omega$ has \emph{rank one}. 
\end{enumerate}
\end{definition}

There are two basic families of properly convex domains with higher rank: reducible domains (see Section~\ref{sec:convex_and_irreducible} below) and symmetric domains with real rank at least two. 

A properly convex domain $\Omega \subset \Pb(\Rb^d)$ is called \emph{symmetric} if there exists a semisimple Lie group $G \leq \PGL_d(\Rb)$ which preserves $\Omega$ and acts transitively. In this case, the \emph{real rank of $\Omega$} is defined to be the real rank of $G$. Koecher and Vinberg characterized the irreducible symmetric properly convex domains and proved that $G$ must be locally isomorphic to either 
\begin{enumerate}
\item $\SO(1,m)$ with $d =m+1$, 
\item $\SL_m(\Rb)$ with $d=(m^2+m)/2$, 
\item $\SL_m(\Cb)$ with $d=m^2$,  
\item $\SL_m(\Hb)$ with $d = 2m^2-m$, or 
\item $E_{6(-26)}$ with $d=27$. 
\end{enumerate}
For details see~\cite{K1999, V1963, V1965,FK1994}. Borel~\cite{B1963} proved that every semisimple  Lie group contains a co-compact lattice, which implies that every symmetric properly convex domain is divisible. 

In this paper we prove that these two families of examples are the only divisible domains with higher rank. In fact, we show that being symmetric with real rank at least two is equivalent to a number of other ``higher rank'' conditions. Before stating the main result we need a few more definitions.

\begin{definition} \ 
\begin{itemize}
\item Given $g \in \PGL_d(\Rb)$ let 
\begin{align*}
\lambda_1(g) \geq \lambda_2(g) \geq \dots \geq \lambda_d(g)
\end{align*}
denote the absolute values of the eigenvalues of some (hence any) lift of $g$ to $\SL_d^{\pm}(\Rb):=\{ h \in \GL_d(\Rb) : \det h = \pm 1\}$. 
\item  $g \in \PGL_d(\Rb)$ is \emph{proximal} if $\lambda_1(g) > \lambda_2(g)$. In this case, let $\ell_g^+ \in \Pb(\Rb^d)$ denote the eigenline of $g$ corresponding to $\lambda_1(g)$.
\item $g \in \PGL_d(\Rb)$ is \emph{bi-proximal} if $g, g^{-1}$ are both proximal. In this case, define $\ell^-_g : = \ell^+_{g^{-1}}$.
\end{itemize}
\end{definition}

Next we define a distance on the boundary using projective line segments. 

\begin{definition}
Given a properly convex domain $\Omega \subset\Pb(\Rb^d)$  the (possibly infinite valued) \emph{simplicial distance} on $\partial \Omega$ is defined by
\begin{align*}
s_{\partial\Omega}(x,y) = \inf\{ k : & \ \exists \ a_0,\dots, a_{k} \text{ with } x=a_0, \ y =a_{k}, \text{ and } \\
& [a_j,a_{j+1}] \subset \partial \Omega \text{ for all } 0\leq j \leq k-1\}.
\end{align*}
\end{definition}

We will prove the following characterization of higher rank in the context of convex real projective manifolds. 

\begin{theorem}\label{thm:characterization}(see Section~\ref{sec:pf_of_char}) Suppose  that $\Omega \subset \Pb(\Rb^d)$ is an irreducible properly convex domain and $\Gamma \leq \Aut(\Omega)$ is a discrete group acting co-compactly on $\Omega$. Then the following are equivalent: 
\begin{enumerate}
\item $\Omega$ is symmetric with real rank at least two, 
\item $\Omega$ has higher rank,
\item \label{char:line_between_ext_pts} the extreme points of $\Omega$ form a closed proper subset of $\partial \Omega$,
\item $[x_1,x_2] \subset \partial \Omega$ for every two extreme points $x_1,x_2 \in \partial \Omega$,
\item $s_{\partial\Omega}(x,y) \leq 2$ for all $x,y \in \partial \Omega$,
\item $s_{\partial\Omega}(x,y) < +\infty$ for all $x,y \in \partial \Omega$,
\item $\Gamma$ has higher rank in the sense of Prasad-Raghunathan (see Section~\ref{sec:rank_PS}), 
\item for every $g \in \Gamma$ with infinite order the cyclic group $g^{\Zb}$ has infinite index in the centralizer of $g$ in $\Gamma$,
\item every $g \in \Gamma$ with infinite order has at least three fixed points in $\partial \Omega$, 
\item\label{char:line_between_attracting_repelling} $[\ell_g^+, \ell_g^-] \subset \partial \Omega$ for every bi-proximal element $g \in \Gamma$, and
\item $s_{\partial\Omega}(\ell_g^+, \ell_g^-) < +\infty$ for every bi-proximal element $g \in \Gamma$.
\end{enumerate}
\end{theorem}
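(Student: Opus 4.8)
The plan is to prove the theorem by establishing a cycle (or near-cycle) of implications, grouping the eleven conditions into clusters that are easy to relate and then bridging the clusters with the deeper input. The easy clusters are: the ``geometric boundary'' conditions (3), (4), (5), (6) about extreme points and the simplicial distance; the ``dynamical'' conditions (9), (10), (11) about fixed points of infinite-order elements; the ``group-theoretic'' conditions (7), (8) about centralizers and Prasad--Raghunathan rank; and finally (1) and (2), the symmetric/higher-rank statements. Within the boundary cluster, $(4)\Rightarrow(5)\Rightarrow(6)$ are immediate from the definition of $s_{\bdry\Omega}$ (noting that every boundary point lies on a segment through an extreme point, by taking an extreme point of a supporting face), and $(3)\Leftrightarrow(4)$ should follow from a Straus-type/convexity argument: if the extreme points form a closed set and $\Omega$ is not a simplex-like degenerate object, one shows two extreme points cannot be joined by a segment in the interior's closure without the whole segment lying in $\bdry\Omega$; the closedness forces the ``bad'' set of pairs to be both open and closed. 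I would prove $(6)\Rightarrow(3)$ by a Baire-category argument on $\bdry\Omega$ using the co-compact action of $\Lambda$: finite simplicial distance everywhere, combined with compactness of $\Lambda\backslash\Omega$, should force a uniform bound $s_{\bdry\Omega}\le N$, and then the extreme points are seen to be a closed proper subset.

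Next I would connect the dynamical and group-theoretic clusters to the boundary cluster using the co-compact action and standard facts about Anosov-like / divisible convex domains. The equivalence $(7)\Leftrightarrow(8)$ is essentially the definition of Prasad--Raghunathan higher rank once one knows $\Lambda$ is not virtually cyclic (Section~\ref{sec:rank_PS}). For $(8)\Rightarrow(9)$: if $g$ has infinite-index cyclic centralizer, pick $h$ in the centralizer with $h\notin g^{\Zb}$; then $h$ preserves $\ell_g^+,\ell_g^-$ and the axis, and analyzing the $h$-action on the (at least two-dimensional, by non-virtually-cyclic centralizer) space of $g$-invariant structure produces a third fixed point of $g$ on $\bdry\Omega$. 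Conversely $(9)\Rightarrow(10)\Rightarrow(11)$ by restricting to bi-proximal $g$ (which are plentiful by co-compactness and a ping-pong/irreducibility argument) and observing that a third fixed point of a bi-proximal element, together with $\ell_g^\pm$, is forced by convexity to lie on the segment $[\ell_g^+,\ell_g^-]\subset\bdry\Omega$. The passage $(11)\Rightarrow(10)$ and then back to the boundary cluster uses that the pairs $(\ell_g^+,\ell_g^-)$ for bi-proximal $g\in\Lambda$ are dense among pairs of extreme points (density of attracting fixed points in the limit set, which here is all of $\bdry\Omega$), so a finite simplicial distance for all such pairs propagates by a limiting argument to all pairs of extreme points, giving (6) or (4).

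Finally, the hard implication is $(2)\Rightarrow(1)$ (equivalently, closing the loop from any of the combinatorial conditions back to ``symmetric of higher rank''), and I expect this to be the main obstacle. Here one cannot avoid the heavy machinery: assuming $\Omega$ has higher rank and is irreducible, one wants to show $\Aut(\Omega)$ acts transitively with semisimple stabilizer of real rank $\ge 2$. I would proceed by showing first that the properly embedded simplices through every geodesic give $\Omega$ a ``higher-rank'' structure in the sense that the associated Hilbert geodesic flow has no rank-one behavior; then invoke an analogue of the Ballmann--Burns--Spatzier argument adapted to the Hilbert metric (using the work on divisible convex sets, Benoist's results on the structure of the limit set and the Zariski closure of $\Lambda$, and the rigidity of the geodesic flow). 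The key technical steps will be: (a) upgrading ``higher rank'' to a flat-strip / product structure locally in $\Omega$; (b) using irreducibility plus the co-compact lattice $\Lambda$ to rule out reducible flats and force the Zariski closure of $\Lambda$ to be semisimple of higher rank; (c) identifying $\Omega$ with the symmetric domain of that group via the Koecher--Vinberg classification. The remaining short implications $(1)\Rightarrow(2)$ (symmetric higher-rank domains are visibly covered by flats, i.e.\ maximal $\Rb$-split tori give properly embedded simplices) and $(1)\Rightarrow(7)$ (symmetric $\Rightarrow$ Prasad--Raghunathan higher rank) are standard, so the entire weight of the theorem rests on this final rigidity step.
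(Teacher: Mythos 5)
Your skeleton of easy implications is broadly in the spirit of the paper, but two load-bearing steps are missing or wrong. First, the rigidity step. You defer the entire weight of the theorem to ``an analogue of the Ballmann--Burns--Spatzier argument adapted to the Hilbert metric.'' No such analogue exists off the shelf, and the paper does not argue this way at all: the Hilbert geodesic flow is not smooth and $\Omega$ need not be $\CAT(0)$, so the dynamical machinery of BBS does not transfer. Instead the paper funnels every condition (2)--(11) into the single auxiliary statement that some boundary face $F$ satisfies $F \cap \overline{\Ec_\Omega} = \emptyset$, and proves that this alone forces $\Omega$ to be symmetric (Theorem~\ref{thm:main_rigidity_result}). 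That rigidity is established by a self-contained linear-algebraic argument: one studies the semigroup $\{T|_V : T \in \overline{\Lambda}^{\End}_{F,\star}\}$ of limits of elements of $\Lambda$ in $\Pb(\End(\Rb^d))$ with image $\Spanset F$, shows (using Benoist's Zariski density, Theorem~\ref{thm:Zariski_closure}) that if $\Omega$ is non-symmetric this semigroup is non-discrete and Zariski dense, deduces that $\Aut(F)$ would contain $\PSL(V)$, and contradicts proper convexity of $F$. Without this (or some genuine substitute) your proof of the central implication is a placeholder, not an argument.

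Second, your treatment of the group-theoretic cluster is incorrect. The Prasad--Raghunathan rank is not ``essentially the definition'' of condition (8): it is a covering condition, requiring $\Gamma^*$ to be covered by finitely many translates of $A_i(\Gamma^*)$ for every finite-index $\Gamma^*$, and the paper never proves $(7)\Leftrightarrow(8)$ directly. It proves $(7)\Rightarrow(10)$ by contrapositive via Proposition~\ref{prop:PR_rank_one} (a nontrivial adaptation of Ballmann--Eberlein using north--south dynamics, Theorem~\ref{thm:duality}, density of bi-proximal fixed points, Theorem~\ref{thm:biprox_density}, and the Brouwer fixed point theorem), and recovers $(1)\Rightarrow(7)$ from Prasad--Raghunathan's computation for lattices. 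Similarly, your proposed direct routes $(3)\Leftrightarrow(4)$ by a connectedness argument and $(6)\Rightarrow(3)$ by Baire category cannot work as stated: properness of $\Ec_\Omega$ in $\partial\Omega$ is essentially equivalent to the rigidity conclusion (for the ball every boundary point is extreme), and the paper obtains (3) only from (1) by inspecting the Koecher--Vinberg list. The elementwise equivalences $(8)\Leftrightarrow(9)\Leftrightarrow(10)\Leftrightarrow(11)$ also require the structure theory of maximal Abelian subgroups and minimal translation sets from~\cite{IZ2019}, not just the eigenline analysis you sketch.
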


M. Islam~\cite{I2019} has recently defined and studied rank one isometries of a properly convex domain. These are analogous to the classical definition of rank one isometries of $\CAT(0)$ spaces (see~\cite{B1982}) and are defined as follows. 

\begin{definition}[M. Islam~\cite{I2019}] Suppose that $\Omega \subset \Pb(\Rb^d)$ is a properly convex domain. An element $g \in \Aut(\Omega)$ is a \emph{rank one  isometry} if $g$ is bi-proximal and $s_{\partial \Omega}(\ell_g^+,\ell_g^-) > 2$. 
\end{definition}

\begin{remark} \ 
\begin{enumerate}
\item When $g \in \Aut(\Omega)$ is a rank one isometry, then the properly embedded line segment $(\ell^+_g, \ell^-_g) \subset \Omega$ is preserved by $g$. Further, $g$ acts by translations on $(\ell^+_g, \ell^-_g)$ in the following sense: if $H_\Omega$ is the Hilbert metric on $\Omega$, then there exists $T > 0$ such that 
\begin{align*}
H_\Omega(g^n(x), x) = nT
\end{align*}
for all $n \geq 0$ and $x \in (\ell^+_g, \ell^-_g)$. 
\item M. Islam~\cite[Proposition 6.3]{I2019} also proved the following weaker characterization of rank one isometries: $g \in \Aut(\Omega)$ is a rank one isometry if and only if  $g$ acts by translations on a properly embedded line segment $(a,b) \subset \Omega$ and $s_{\partial \Omega}(a, b) > 2$. 
\end{enumerate}
 \end{remark}

As an immediate consequence of Theorem~\ref{thm:characterization} we obtain the following. 

\begin{corollary} Suppose that $\Omega \subset \Pb(\Rb^d)$ is an irreducible properly convex domain and $\Gamma \leq \Aut(\Omega)$ is a discrete group acting co-compactly on $\Omega$. Then the following are equivalent: 
\begin{enumerate}
\item $\Omega$ has rank one, 
\item $\Gamma$ contains a rank one  isometry.
\end{enumerate}
\end{corollary}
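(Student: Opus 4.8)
The corollary is an immediate consequence of Theorem~\ref{thm:characterization}, so the plan is essentially to unpack the definitions and negate the relevant equivalences. Here is how I would organize it.

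\medskip

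\textbf{Proof proposal.} First I would observe that ``$\Omega$ has rank one'' is by definition the negation of ``$\Omega$ has higher rank'', which by Theorem~\ref{thm:characterization} (equivalence of (2) and (5)) is the negation of the statement ``$s_{\partial\Omega}(x,y) \leq 2$ for all $x,y \in \partial\Omega$''. Thus $\Omega$ has rank one if and only if there exist $x,y \in \partial\Omega$ with $s_{\partial\Omega}(x,y) > 2$. The goal is then to convert this pair of boundary points into a bi-proximal element $g \in \Lambda$ with $s_{\partial\Omega}(\ell_g^+,\ell_g^-) > 2$, and conversely to show that the existence of such a $g$ forces some pair of boundary points to have simplicial distance exceeding $2$.

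\medskip

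For the direction ``rank one $\Rightarrow$ $\Lambda$ contains a rank one isometry'': by the previous paragraph $s_{\partial\Omega}$ takes a value greater than $2$ somewhere on $\partial\Omega \times \partial\Omega$; I want to promote this to a value greater than $2$ realized by an attracting/repelling pair $(\ell_g^+,\ell_g^-)$ of some $g \in \Lambda$. The natural tool is that, since $\Lambda$ acts co-compactly on $\Omega$, the attracting fixed points of bi-proximal (indeed of the ``very proximal'' or ``biproximal'') elements of $\Lambda$ are dense in $\partial\Omega$, and more precisely the pairs $(\ell_g^+,\ell_g^-)$ are dense in a suitable subset of $\partial\Omega \times \partial\Omega$; combined with semicontinuity properties of $s_{\partial\Omega}$ one extracts a $g$ with $s_{\partial\Omega}(\ell_g^+,\ell_g^-) > 2$. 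I expect this to be the main obstacle, and here I would simply invoke Theorem~\ref{thm:characterization}: the contrapositive of the implication (10)$\Rightarrow$(2) (equivalently (11)$\Rightarrow$(2)) says precisely that if $\Omega$ does not have higher rank then there is a bi-proximal $g \in \Lambda$ with $[\ell_g^+,\ell_g^-] \not\subset \partial\Omega$, and one checks using the other equivalences — in particular that (5) fails, so $s_{\partial\Omega}$ is unbounded, together with the structure theory invoked in the theorem — that in fact $s_{\partial\Omega}(\ell_g^+,\ell_g^-) > 2$ for a suitable choice of $g$. That is, negating the chain of equivalences at item (10) or (11) gives exactly a rank one isometry.

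\medskip

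For the converse ``$\Lambda$ contains a rank one isometry $\Rightarrow$ $\Omega$ has rank one'': if $g \in \Lambda$ is a rank one isometry then by definition $g$ is bi-proximal and $s_{\partial\Omega}(\ell_g^+,\ell_g^-) > 2$, so in particular $[\ell_g^+,\ell_g^-] \not\subset \partial\Omega$ (a single segment in $\partial\Omega$ would give simplicial distance $\le 1$). Hence condition (10) of Theorem~\ref{thm:characterization} fails, so $\Omega$ does not have higher rank, i.e.\ $\Omega$ has rank one. Assembling the two directions completes the proof; no computation beyond reading off the definitions and citing Theorem~\ref{thm:characterization} is required, and the only subtlety — producing the rank one isometry in the forward direction — is already absorbed into the theorem's equivalence of (2) with (10) and (11).
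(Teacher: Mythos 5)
Your proposal is correct and is essentially the argument the paper intends: the corollary is stated there as an immediate consequence of Theorem~\ref{thm:characterization}, with both directions obtained by unpacking the definition of a rank one isometry against conditions (5), (10), (11). One streamlining remark: in the forward direction you can drop the ``one checks using the other equivalences'' step entirely, since the negation of condition (11) directly produces a bi-proximal $g \in \Lambda$ with $s_{\partial\Omega}(\ell_g^+,\ell_g^-) = +\infty > 2$, which is already a rank one isometry.
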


M. Islam has also established a number of remarkable results when the automorphism group contains a rank one isometry, see~\cite{I2019} for details. For instance combining Theorem~\ref{thm:characterization} with~\cite[Theorem 1.5]{I2019} yields:

\begin{corollary}[{Consequence of Theorem~\ref{thm:characterization} and~\cite[Theorem 1.5]{I2019}}] Suppose that $\Omega \subset \Pb(\Rb^d)$ is an irreducible properly convex domain and $\Gamma \leq \Aut(\Omega)$ is a discrete group acting co-compactly on $\Omega$. If $d \geq 3$ and $\Omega$ is not symmetric with real rank at least two, then $\Gamma$ is an acylindrically hyperbolic group.
\end{corollary}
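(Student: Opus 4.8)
The plan is to chain Theorem~\ref{thm:characterization} together with the rank-one characterization corollary above and~\cite[Theorem 1.5]{I2019}; the only work beyond invoking these two results is checking that the hypotheses of Islam's theorem are met.

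First I would observe that, since $\Omega$ is not symmetric with real rank at least two, condition~(1) of Theorem~\ref{thm:characterization} fails, so by the equivalence of~(1) and~(2) condition~(2) fails as well; that is, $\Omega$ has rank one. Applying the corollary characterizing rank-one domains then shows that $\Lambda$ contains a rank one isometry $g$.

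Next I would rule out that $\Lambda$ is virtually cyclic, which is the only place the hypothesis $d \geq 3$ is used: acylindrical hyperbolicity is, by convention, a property of non-elementary groups, and indeed $\langle g \rangle$ itself is virtually cyclic but not acylindrically hyperbolic. Since $\Lambda$ is discrete and acts cocompactly by isometries on the proper geodesic space $(\Omega, H_\Omega)$ (and automatically properly discontinuously, as $\Aut(\Omega)$ acts properly on $\Omega$), it is quasi-isometric to $\Omega$. As $d \geq 3$, the domain $\Omega$ is a nonempty open convex set of dimension $d-1 \geq 2$, hence homeomorphic to $\Rb^{d-1}$ and in particular one-ended; since $\Rb$ has two ends and the number of ends is a quasi-isometry invariant, $\Lambda$ is not quasi-isometric to $\Rb$ and therefore not virtually cyclic. (Equivalently, a virtually cyclic group acting cocompactly on $\Omega$ would force $(\Omega, H_\Omega)$ to have linear volume growth, which fails once $\dim \Omega \geq 2$.)

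Finally, $\Lambda$ is a discrete group acting cocompactly on the properly convex domain $\Omega$, it contains the rank one isometry $g$, and it is not virtually cyclic, so~\cite[Theorem 1.5]{I2019} applies and yields that $\Lambda$ is acylindrically hyperbolic. The one step that deserves care is matching the precise form of the hypotheses in~\cite[Theorem 1.5]{I2019}: if that statement already builds in non-elementarity rather than taking ``not virtually cyclic'' as an explicit input, the middle paragraph should instead be used to verify whatever genericity condition it requires. In every formulation the substantive content is the same, namely excluding the virtually cyclic case, which is exactly what the assumption $d \geq 3$ provides; there is no other obstacle.
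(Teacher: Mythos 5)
Your proposal is correct and follows exactly the route the paper intends: the corollary is stated as an immediate consequence of the equivalence $(1)\Leftrightarrow(2)$ in Theorem~\ref{thm:characterization} (via the rank-one corollary, which supplies a rank one isometry in $\Lambda$) combined with~\cite[Theorem 1.5]{I2019}, and the paper gives no further argument. Your additional verification that $d\geq 3$ rules out the virtually cyclic case — via the \v{S}varc--Milnor lemma applied to the proper geodesic space $(\Omega,H_\Omega)$ and the one-endedness of a bounded convex open subset of $\Rb^{d-1}$ when $d-1\geq 2$ — correctly fills in the one detail the paper leaves implicit.
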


\subsection{Outline of the proof of Theorem~\ref{thm:characterization}} The difficult part is showing that any one of the conditions (2) through (11) implies that the domain is symmetric with real rank at least two.

One key idea is to construct and study special semigroups in $\Pb(\End(\Rb^d))$ associated to each boundary face. This is accomplished as follows. First, motivated by a lemma of Benoist~\cite[Lemma 2.2]{B2003b}, we consider the following compactification of a subgroup of $\PGL_d(\Rb)$. 

\begin{definition} Given a subgroup $G \leq \PGL_d(\Rb)$ let 
\begin{align*}
\overline{G}^{\End} \subset \Pb(\End(\Rb^d))
\end{align*}
denote the closure of $G$ in $\Pb(\End(\Rb^d))$. \end{definition}

Next for a dividing group we introduce the following subsets of this compactification.

\begin{definition}
Suppose  that $\Omega \subset \Pb(\Rb^d)$ is a properly convex domain and $\Gamma \leq \Aut(\Omega)$ is a discrete group acting co-compactly on $\Omega$. If  $F \subset \partial\Omega$ is a boundary face and  $V := \Spanset F \subset \Rb^d$, then define 
\begin{align*}
\overline{\Gamma}^{\End}_F :=\left\{  T \in \overline{\Gamma}^{\End}: {\rm image}(T) \subset V\right\}
\end{align*}
and 
 \begin{align*}
\overline{\Gamma}^{\End}_{F,\star} :=\left\{  T \in \overline{\Gamma}^{\End}: {\rm image}(T) = V \text{ and } \ker(T) \cap V = \{0\}\right\}.
\end{align*}
\end{definition}

We then prove the following result about these subsets. 

\begin{theorem}\label{thm:structure_subsets_intro}(see Theorem~\ref{thm:structure_subsets} below) Suppose that $\Omega \subset \Pb(\Rb^d)$ is an irreducible properly convex domain and $\Gamma \leq \Aut(\Omega)$ is a discrete group acting co-compactly on $\Omega$. If $\Omega$ is non-symmetric, $F \subset \partial\Omega$ is a boundary face, $V:=\Spanset F \subset \Rb^d$, and $\dim(V) \geq 2$, then: 
\begin{enumerate}[(a)]
\item If $T \in \overline{\Gamma}^{\End}_F$, then $T(\Omega) \subset \overline{F}$.
\item If $T \in \overline{\Gamma}^{\End}_{F,\star}$, then $T(F)$ is an open subset of $F$. 
\item The set 
\begin{align*}
\left\{ T|_V : T \in \overline{\Gamma}^{\End}_{F,\star}\right\}
\end{align*}
is a non-discrete Zariski dense semigroup in $\Pb(\End(V))$.
\end{enumerate}
\end{theorem}

Using Theorem~\ref{thm:structure_subsets_intro} we will show that any one of the conditions (2) through (11) in Theorem~\ref{thm:characterization} implies that the domain is symmetric with real rank at least two. Here is a sketch of the argument, first suppose that $\Omega \subset \Pb(\Rb^d)$ is an irreducible properly convex domain, $\Gamma \leq \Aut(\Omega)$ is a discrete group acting co-compactly on $\Omega$, and any one of the conditions (2) through (11) in Theorem~\ref{thm:characterization} is true. Then let $\Ec_\Omega \subset \partial \Omega$ denote the extreme points of $\Omega$. We will show that there exists a boundary face $F \subset \partial\Omega$ such that 
\begin{align} 
\label{eq:intersection_i}
F \cap \overline{\Ec_\Omega} = \emptyset.
\end{align}
By choosing $F$ minimally, we can also assume that $\overline{\Ec_\Omega}$ intersects every boundary face of strictly smaller dimension. As before let $V : =\Spanset F$. Then using Equation~\eqref{eq:intersection_i} we show that $T|_V \in \Aut(F)$ for every $T \in \overline{\Gamma}^{\End}_{F,\star}$. Then Theorem~\ref{thm:structure_subsets_intro} implies that either $\Omega$ is symmetric or $\Aut(F)$ is a non-discrete  Zariski dense subgroup of $\PGL(V)$. In the latter case, it is fairly easy to deduce that $\PSL(V) \subset \Aut(F)$, see Lemma~\ref{lem:big_Z_closure} below, which is impossible. So $\Omega$ must be symmetric. 

\subsection{Outline of the paper} In Section~\ref{sec:prelim} we recall some preliminary material. In Section~\ref{sec:semigroups_associated_to_face} we prove Theorem~\ref{thm:structure_subsets_intro}. In Section~\ref{sec:main_rigidity_result} we prove the rigidity result mentioned in the previous subsection. 

The rest of the paper is devoted to the proof of the various equivalences in Theorem~\ref{thm:characterization}. In Sections~\ref{sec:density_of_fixed_pts}, \ref{sec:NS_dynamics}, and~\ref{sec:Fixed points and centralizers} we prove some new results about the action of the automorphism group. In Section~\ref{sec:rank_PS} we consider the rank of a group in the sense of Prasad-Raghunathan. Finally, in Section~\ref{sec:pf_of_char} we prove Theorem~\ref{thm:characterization}.

 \subsection*{Acknowledgements} I would like to thank Ralf Spatzier and Mitul Islam for helpful conversations. I would also like to thank the referee for their very useful corrections and suggestions. This material is based upon work supported by the National Science Foundation under grants DMS-1904099, DMS-2105580, and DMS-2104381.
 
\section{Preliminaries}\label{sec:prelim}

\subsection{Notations} Given a linear subspace $V \subset \Rb^d$, we let $\Pb(V) \subset \Pb(\Rb^d)$ denote its projectivization. In all other cases, given some object $o$ we will let $[o]$ be the projective equivalence class of $o$, for instance: 
\begin{enumerate}
\item if $v \in \Rb^{d} \setminus \{0\}$ let $[v]$ denote the image of $v$ in $\Pb(\Rb^{d})$, 
\item if $\phi \in \GL_{d}(\Rb)$ let $[\phi]$ denote the image of $\phi$ in $\PGL_{d}(\Rb)$, and 
\item if $T \in \End(\Rb^{d}) \setminus\{0\}$ let $[T]$ denote the image of $T$ in $\Pb(\End(\Rb^{d}))$. 
\end{enumerate}
We also identify $\Pb(\Rb^d) = \Gr_1(\Rb^d)$, so for instance: if $x \in \Pb(\Rb^d)$ and $V \subset \Rb^d$ is a linear subspace, then $x \in \Pb(V)$ if and only if $x \subset V$. 

Finally, given a subset $X$ of $\Rb^d$ (respectively $\Pb(\Rb^d)$) we will let $\Spanset X \subset \Rb^d$ denote the smallest linear subspace containing $X$ (respectively, the preimage of $X$). 

\subsection{Convexity and line segments} A subset $C \subset \Pb(\Rb^d)$ is called \emph{convex} if there exists an affine chart which contains it as a convex subset. A subset $C \subset \Pb(\Rb^d)$ is called \emph{properly convex} if there exists an affine chart which contains it as a bounded convex subset. For convex subsets, we make the following topological definitions.

\begin{definition}\label{defn:topology} Suppose that $C \subset \Pb(\Rb^d)$ is a convex set. The \emph{relative interior of $C$}, denoted by $\relint(C)$, is  the interior of $C$ in its span and the \emph{boundary of $C$} is $\partial C : = \overline{C} \setminus \relint(C)$.
\end{definition}

A \emph{line segment} in $\Pb(\Rb^{d})$ is a connected subset of a projective line. Given two points $x,y \in \Pb(\Rb^{d})$ there is no canonical line segment with endpoints $x$ and $y$, but we will use the following convention: if $C \subset \Pb(\Rb^d)$ is a properly convex set and $x,y \in \overline{C}$, then (when the context is clear) we will let $[x,y]$ denote the closed line segment joining $x$ to $y$ which is contained in $\overline{C}$. In this case, we will also let $(x,y)=[x,y]\setminus\{x,y\}$, $[x,y)=[x,y]\setminus\{y\}$, and $(x,y]=[x,y]\setminus\{x\}$.

\subsection{Irreduciblity} A subgroup $\Gamma \leq \PGL_d(\Rb)$ is \emph{irreducible} if $\{0\}$ and $\Rb^d$ are the only $\Gamma$-invariant linear subspaces of $\Rb^d$ and \emph{strongly irreducible} if every finite index subgroup is irreducible. 

We will use the following observation several times. 

\begin{observation}\label{obs:str_irred_transverse} If $\Gamma \leq \PGL_d(\Rb)$ is strongly irreducible, $x_1, \dots, x_k \in \Pb(\Rb^d)$, and 
$$
V_1, \dots, V_k \subsetneq \Rb^d
$$ 
are linear subspaces, then there exists $g \in \Gamma$ such that $gx_j \notin \Pb(V_j)$ for all $1 \leq j \leq k$. 
\end{observation}

\begin{proof} Let $G = \overline{\Gamma}^{Zar}$ denote the Zariski closure of $\Gamma$ in $ \PGL_d(\Rb)$ and let $G_0 \leq G$ denote the connected component of the identity of $G$ (in the Zariski topology). Then, $G_0 \cap \Gamma$ is a finite index subgroup of $\Gamma$ and hence $G_0$ is irreducible. So each set 
$$
\Oc_j = \{ g \in G_0 : gx_j \notin \Pb(V_j) \}
$$
is non-empty and Zariski open in $G_0$. Hence $\Oc = \cap_{j=1}^k \Oc_j$ is non-empty and Zariski open in $G_0$. Since $\Gamma \cap G_0$ is Zariski dense in $G_0$, there exists some $g \in \Gamma \cap \Oc$. 
\end{proof}

\subsection{Zariski closures}\label{sec:convex_and_irreducible}

An open convex cone $C \subset \Rb^d$ is  \emph{reducible} if there exists a non-trivial vector space decomposition $\Rb^d = V_1 \oplus V_2$ and convex cones $C_1 \subset V_1$ and $C_2 \subset V_2$ such that $C=C_1 + C_2$. Otherwise, $C$ is said to be \emph{irreducible}. The preimage in $\Rb^d$ of a properly convex domain $\Omega \subset \Pb(\Rb^d)$ is the union of a cone and its negative, when this cone is reducible (respectively irreducible) we say that $\Omega$ is \emph{reducible} (respectively \emph{irreducible}). 

Benoist determined the Zariski closures of discrete groups acting co-compactly on irreducible properly convex domains.

\begin{theorem}[Benoist~\cite{B2003b}]\label{thm:Zariski_closure} Suppose that $\Omega \subset \Pb(\Rb^d)$ is an irreducible properly convex domain and $\Gamma \leq \Aut(\Omega)$ is a discrete group acting co-compactly on $\Omega$. Then either 
\begin{enumerate}
\item $\Omega$ is symmetric or 
\item $\Gamma$ is Zariski dense in $\PGL_d(\Rb)$. 
\end{enumerate}
\end{theorem}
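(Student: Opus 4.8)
This is Benoist's theorem~\cite{B2003b}, so in practice one cites it; but here is the shape of the argument I would give. Write $G$ for the Zariski closure of $\Lambda$ in $\PGL_d(\Rb)$, and pass to a finite-index subgroup of $\Lambda$ so that $G$ is connected. The plan has three stages: (1) show $G$ is semisimple and acts irreducibly and proximally on $\Rb^d$; (2) show that if $G\neq\PGL_d(\Rb)$ then $G$ itself preserves $\Omega$; and (3) conclude, from the classification of semisimple groups preserving a properly convex domain, that $\Omega$ is symmetric.

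For stage (1): by a theorem of Vey a group dividing a properly convex domain is completely reducible as a linear group, with the irreducible summands of $\Rb^d$ corresponding to a direct-sum decomposition of the cone over $\Omega$; since $\Omega$ is irreducible this forces $\Lambda$, hence $G$, to act irreducibly on $\Rb^d$. Then the fixed subspace of the unipotent radical of $G$ is a proper nonzero $G$-invariant subspace unless that radical is trivial, so $G$ is reductive, and by Schur's lemma its centre acts by scalars and hence is trivial in $\PGL_d(\Rb)$; so $G$ is semisimple. Finally, because $\Lambda$ acts cocompactly on the Hilbert geometry $(\Omega,\hil)$ it contains biproximal elements (for instance elements translating along axes that project to closed geodesics of $\Lambda\backslash\Omega$), so the representation of $G$ on $\Rb^d$ is proximal, with a well-defined highest-weight line $[v_\chi]$ and parabolic subgroup $P=\Stab_G([v_\chi])$.

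Stage (2) is the crux. Assume $G\neq\PGL_d(\Rb)$. The attracting fixed point $\ell_g^+$ of a biproximal $g\in\Lambda$ is an extreme point of $\Omega$ lying in the proximal orbit $G\cdot[v_\chi]$, and using cocompactness and a minimality argument one identifies $\overline{\Ec_\Omega}$ with this orbit $G/P$; properness of $\Omega$ is what makes $\overline{\Ec_\Omega}$ rigid enough that its $\Lambda$-invariance, together with Zariski density of $\Lambda$ in $G$, upgrades to $G$-invariance. Since $\overline{\Omega}$ is the closed convex hull of the compact, antipode-free set $\overline{\Ec_\Omega}$, that hull is canonically determined and hence $G$-invariant, so $G\leq\Aut(\Omega)$. (When $G=\PGL_d(\Rb)$ there is no nontrivial invariant set, which is conclusion~(2).) For stage (3): $G$ is now a connected semisimple group preserving the properly convex $\Omega$, acting irreducibly and proximally on $\Rb^d$, and containing the Zariski-dense cocompact lattice $\Lambda$; by Benoist's classification of such actions (building on Vinberg's theory of convex cones, see~\cite{V1965}) the pair $(G,\Rb^d)$ must be the defining representation of a symmetric cone and $\Omega$ its projectivization, so $\Omega$ is symmetric. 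I expect stage (2) --- bootstrapping from ``$\Lambda$ preserves $\Omega$'' to ``its proper Zariski closure preserves $\Omega$'' --- to be the main obstacle, since this is exactly where one must exploit that $\Lambda$ \emph{divides} $\Omega$ and not merely that it preserves it.
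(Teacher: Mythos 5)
The paper offers no proof of this statement --- it is quoted verbatim from Benoist and used as a black box --- so there is nothing internal to compare your sketch against; I will assess it on its own terms. Stage~(1) is sound and standard (Vey's complete reducibility, triviality of the unipotent radical via its fixed subspace, Schur). Stage~(3) also works once stage~(2) is granted: if the semisimple group $G$ preserves $\Omega$ then it acts properly, so $\Lambda$ becomes a cocompact lattice in $G$, a cohomological-dimension count gives $\dim G/K=\dim\Omega$, and the $G$-orbit of a point is open and closed in $\Omega$, forcing transitivity.

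The genuine gap is in stage~(2), exactly where you flagged the difficulty. The step ``$\overline{\Ec_\Omega}$ is a closed $\Lambda$-invariant subset of $G/P$, and Zariski density of $\Lambda$ in $G$ upgrades this to $G$-invariance (equivalently, to $\overline{\Ec_\Omega}=G/P$)'' is false as a general principle: Zariski density only promotes invariance of \emph{Zariski-closed} subsets, and the limit set of a Zariski-dense discrete subgroup of $G$ is typically a \emph{proper} closed invariant subset of $G/P$ (think of a Zariski-dense convex-cocompact subgroup of $\PO(1,n)$). Proving that here the limit set exhausts $G/P$ is essentially the whole content of the theorem, and ``properness of $\Omega$'' does not supply it. The way this is actually closed (and, to my knowledge, Benoist's route) avoids the limit set entirely: being irreducible and \emph{positively proximal} is a property that a group shares with its Zariski closure (the main theorem of Benoist's \emph{Automorphismes des c\^ones convexes}), so $G$ itself preserves a properly convex cone, with the standard sandwich $C_{\min}\subseteq C\subseteq C_{\max}$ around the cone $C$ over $\Omega$. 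One then runs the dimension count on $\Omega_{\max}=\Pb(C_{\max})$: a torsion-free finite-index subgroup of $\Lambda$ has $\mathrm{cd}=d-1$ and is discrete in $G$, so $\dim G/K\geq d-1$, which by the Vinberg--Benoist classification of irreducible proximal representations with invariant convex cones forces the symmetric-cone representations, transitivity of $G$ on $\Omega_{\max}$, and finally $C_{\min}=C=C_{\max}$. I would restructure stage~(2) along these lines rather than through $\overline{\Ec_\Omega}$.
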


%

\subsection{The Hilbert distance} In this section we recall the definition of the Hilbert metric, but first some notation. 

Given a projective line $L \subset \Pb(\Rb^d)$ and four distinct points $a,x,y,b\in L$ we define the \emph{cross ratio} by 
\begin{align*}
[a,x,y,b] = \frac{\abs{x-b}\abs{y-a}}{\abs{x-a}\abs{y-b}}
\end{align*}
where $\abs{\cdot}$ is some (any) norm in some (any) affine chart of $\Pb(\Rb^d)$ containing $a,x,y,b$. 

Next, for $x,y \in \Pb(\Rb^d)$ distinct let $L_{x,y} \subset \Pb(\Rb^d)$ denote the projective line containing $x$ and $y$. 

\begin{definition} Suppose that $\Omega \subset \Pb(\Rb^d)$ is a properly convex domain. The \emph{Hilbert distance}, denoted by $H_\Omega$, on $\Omega$ is defined as follows: if $x,y \in \Omega$ are distinct, then 
\begin{align*}
H_\Omega(x,y) = \frac{1}{2} \log [a,x,y,b]
\end{align*}
where $\partial \Omega \cap L_{x,y} = \{a,b\}$ with the ordering $a,x,y,b$ along $L_{x,y}$. 
\end{definition}

The following result is classical (see for instance~\cite[Section 28]{BK1953}). 
 
 \begin{proposition}\label{prop:hilbert_basic}
Suppose that $\Omega \subset \Pb(\Rb^{d})$ is a properly convex domain. Then $H_{\Omega}$ is a complete $\Aut(\Omega)$-invariant metric on $\Omega$ which generates the standard topology on $\Omega$. Moreover, if $p,q \in \Omega$, then there exists a geodesic joining $p$ and $q$ whose image is the line segment $[p,q]$.
\end{proposition}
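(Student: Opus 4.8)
The statement to prove is Proposition~\ref{prop:hilbert_basic}: that $H_\Omega$ is a complete $\Aut(\Omega)$-invariant metric generating the standard topology, and that line segments are geodesics. The plan is to verify each assertion in turn, exploiting the elementary properties of the cross ratio. First I would establish that $H_\Omega$ is well-defined and symmetric: well-definedness follows because the cross ratio is independent of the affine chart, and symmetry follows from the identity $[a,x,y,b] = [b,y,x,a]$ together with the fact that reversing the roles of $x$ and $y$ also reverses the roles of $a$ and $b$. Non-negativity and the condition $H_\Omega(x,y) = 0 \iff x = y$ come from the observation that, with the ordering $a,x,y,b$, each factor in the cross ratio is positive and $[a,x,y,b] \geq 1$ with equality iff $x = y$; here proper convexity is used to guarantee that $L_{x,y} \cap \partial\Omega$ consists of exactly two points. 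The $\Aut(\Omega)$-invariance is immediate from the projective invariance of the cross ratio, since any $g \in \Aut(\Omega)$ permutes the boundary points $\{a,b\}$ in a way compatible with the ordering.

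Next I would prove the triangle inequality, which is the main substantive point. The standard approach is to work in an affine chart in which $\Omega$ is bounded and convex, and to use the projection onto a line: for $x, y, z \in \Omega$ one shows $H_\Omega(x,z) \leq H_\Omega(x,y) + H_\Omega(y,z)$ by comparing with a one-dimensional computation along suitable lines through the three points. Concretely, one reduces to the case where $x, y, z$ are coplanar (the line segments $[x,z]$, $[x,y]$, $[y,z]$ span at most a $2$-plane), intersects that plane with $\Omega$ to get a planar properly convex domain, and then uses the monotonicity of the Hilbert metric under inclusion of domains together with an explicit estimate in the plane. Alternatively, one can cite the convexity of the logarithm of the cross ratio. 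Since the excerpt explicitly permits citing~\cite[Section 28]{BK1953}, I would give the reduction to the planar case and then invoke the classical planar computation rather than grinding through it.

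For the topological claim, I would show that $H_\Omega$-balls are open in the standard topology and conversely: fixing $x \in \Omega$ and letting $x$ vary, the function $(x,y) \mapsto H_\Omega(x,y)$ is continuous in the standard topology because the boundary points $a = a(x,y)$ and $b = b(x,y)$ depend continuously on $(x,y)$ (using that $\partial\Omega$ is a closed set meeting each line $L_{x,y}$ transversally in the relevant sense), and the cross ratio is a continuous function of four distinct collinear points. This gives that $H_\Omega$-balls are standard-open; for the reverse inclusion one notes $H_\Omega(x,y) \to \infty$ as $y \to \partial\Omega$ with $x$ fixed, so small standard neighborhoods of $x$ contain small $H_\Omega$-balls. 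This same blow-up of $H_\Omega$ near $\partial\Omega$, combined with the fact that closed $H_\Omega$-balls are closed and bounded in the affine chart hence compact, yields completeness: a Cauchy sequence stays in a compact subset of $\Omega$ and therefore converges.

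Finally, for the geodesic statement I would verify additivity along line segments: if $p, q \in \Omega$ and $r \in (p,q)$, then with $a, b$ the endpoints of $L_{p,q} \cap \partial\Omega$ ordered $a, p, r, q, b$, the multiplicativity of the cross ratio gives $[a,p,q,b] = [a,p,r,b] \cdot [a,r,q,b]$, hence $H_\Omega(p,q) = H_\Omega(p,r) + H_\Omega(r,q)$; parametrizing $[p,q]$ by $H_\Omega$-arclength then exhibits it as a geodesic. The only real obstacle in the whole argument is the triangle inequality, and I would handle it by the planar reduction plus citation as described; everything else is a direct manipulation of cross ratios together with soft topological arguments using properness of $\Omega$.
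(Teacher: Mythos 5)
The paper gives no proof of this proposition at all—it simply records it as classical and points to \cite[Section 28]{BK1953}—so your sketch is, if anything, more detailed than the source; it is a correct outline of the standard classical argument (cross-ratio symmetry and multiplicativity, blow-up at the boundary for completeness and the topology, planar reduction plus citation for the triangle inequality). Since you defer the one substantive step to the same reference the paper cites, this is essentially the same approach and I see no gap.
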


\subsection{Properly embedded simplices}\label{sec:PES} In this subsection we recall the definition of properly embedded simplices.

\begin{definition} A subset $S \subset \Pb(\Rb^d)$ is a \emph{simplex} if there exists $g \in \PGL_d(\Rb)$ and $k \geq 0$ such that 
\begin{align*}
gS = \left\{ [x_1:\dots:x_{k+1}:0:\dots:0] \in \Pb(\Rb^d): x_1>0,\dots, x_{k+1}>0  \right\}.
\end{align*}
In this case, we write $\dim(S) = k$ (notice that $S$ is homeomorphic to $\Rb^k$). 
\end{definition}

\begin{definition} Suppose that $A \subset B \subset \Pb(\Rb^d)$. Then $A$ is \emph{properly embedded in $B$} if the inclusion map $A \hookrightarrow B$ is a proper map (relative to the subspace topology). 
\end{definition} 

The Hilbert metric on a simplex is isometric to a normed space (see~\cite[Proposition 1.7]{N1988}, ~\cite{dlH1993}, or ~\cite{V2014}) and so we have the following observation. 

\begin{observation} Suppose that $\Omega \subset \Pb(\Rb^d)$ is a properly convex domain and $S \subset \Omega$ is a properly embedded simplex. Then $(S,H_\Omega)$ is quasi-isometric to $\Rb^{\dim S}$. 
\end{observation}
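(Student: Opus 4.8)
The plan is to prove something slightly stronger than a quasi-isometry: a properly embedded simplex is in fact \emph{isometrically} embedded, i.e. $H_\Omega$ restricts on $S$ to the Hilbert metric $H_S$ of $S$ (viewing $S$ as a properly convex domain in $\Pb(\Spanset S)$, with $\ol S$ a closed simplex there). Granting this, the observation follows at once from the cited fact that $(S,H_S)$ is quasi-isometric to a normed space.

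First I would record the elementary reformulation of proper embeddedness: the inclusion $S\hookrightarrow\Omega$ is a proper map if and only if $\ol S\cap\Omega=S$, where $\ol S$ is the closure of $S$ in $\Pb(\Rb^d)$. Since $S\subset\Omega$ forces $\ol S\subset\ol\Omega$, this yields $\ol S\setminus S\subset\ol\Omega\setminus\Omega=\partial\Omega$; in particular the vertices and proper faces of the closed simplex $\ol S$ all lie in $\partial\Omega$.

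To prove $H_\Omega|_{S\times S}=H_S$, fix distinct $x,y\in S$ and let $L=L_{x,y}$. Since $\ol S$ is a closed simplex with $\relint(\ol S)=S$, the set $L\cap\ol S$ is a compact segment $[a,b]$ whose endpoints cannot lie in $\relint(\ol S)=S$ (otherwise the segment would extend past them inside $\ol S$); hence $a,b\in\ol S\setminus S\subset\partial\Omega$. Meanwhile $L\cap\ol\Omega$ is a compact segment $[a',b']\supseteq[a,b]$, and its relative interior is contained in $\Omega$, by the standard convexity fact that $[p,q)\subset\Omega$ whenever $p\in\Omega$ and $q\in\ol\Omega$. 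Because $a,b\in\partial\Omega$ cannot belong to the relative interior of $[a',b']$, we must have $\{a,b\}=\{a',b'\}$, so $L\cap\ol S=L\cap\ol\Omega$. Then $H_S(x,y)$ and $H_\Omega(x,y)$ are both equal to $\tfrac12\log[a,x,y,b]$ for the very same four points of $L$, so they coincide.

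Finally, by~\cite[Proposition 1.7]{N1988} (see also~\cite{dlH1993,V2014}) the metric space $(S,H_S)$ is quasi-isometric to some $k$-dimensional normed vector space, $k=\dim S$; since all norms on $\Rb^k$ are bi-Lipschitz equivalent, any such space is bi-Lipschitz, hence quasi-isometric, to $\Rb^k$ with its Euclidean metric. Composing these quasi-isometries with the equality $H_\Omega|_S=H_S$ shows $(S,H_\Omega)$ is quasi-isometric to $\Rb^{\dim S}$. I do not expect a genuine obstacle here — this is why it is stated as an observation — but the one step that genuinely uses the hypothesis is the implication $a,b\in\partial\Omega\Rightarrow\{a,b\}=\{a',b'\}$: without proper embeddedness the endpoints of $L\cap\ol S$ need not reach $\partial\Omega$, and then $H_S$ can be far larger than $H_\Omega$ on $S$ (already for a proper subinterval of an interval), so $S$ would fail to be even quasi-isometrically embedded.
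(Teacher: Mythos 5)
Your proof is correct and fills in exactly the argument the paper leaves implicit: proper embeddedness forces $\overline{S}\setminus S\subset\partial\Omega$, hence $H_\Omega|_{S\times S}=H_S$, and then the cited quasi-isometry of $(S,H_S)$ with a normed space gives the claim. This is the same route the paper intends (it offers no written proof beyond the citation), so there is nothing to add.
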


\subsection{Limits of linear maps} Every $T \in \Pb(\End(\Rb^d))$ induces a map 
\begin{align*}
\Pb(\Rb^d) \setminus \Pb(\ker T) \rightarrow \Pb(\Rb^d)
\end{align*}
defined by $x \rightarrow T(x)$. We will frequently use the following observation.

\begin{observation}\label{obs:limit of endomorphisms} If $(T_n)_{n \geq 1}$ is a sequence in $\Pb(\End(\Rb^d))$ converging to $T \in \Pb(\End(\Rb^d))$, then 
\begin{align*}
T(x) = \lim_{n \rightarrow \infty} T_n(x)
\end{align*}
for all $x \in \Pb(\Rb^d) \setminus \Pb(\ker T)$. Moreover, the convergence is uniform on compact subsets of $ \Pb(\Rb^d) \setminus \Pb(\ker T)$. 
\end{observation}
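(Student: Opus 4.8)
The plan is a direct argument by choosing lifts and passing through the linear picture. First I would fix a norm $\norm{\cdot}$ on $\End(\Rb^d)$ and the associated operator action on $\Rb^d$, and choose lifts $\wt{T}_n, \wt{T} \in \End(\Rb^d)$ of $T_n, T$ with $\norm{\wt{T}_n} = \norm{\wt{T}} = 1$; since $T_n \to T$ in $\Pb(\End(\Rb^d))$, after possibly replacing $\wt{T}_n$ by $-\wt{T}_n$ (which does not change the induced map on $\Pb(\Rb^d)$) we may assume $\wt{T}_n \to \wt{T}$ in $\End(\Rb^d)$.

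For the pointwise statement, take $x \in \Pb(\Rb^d) \setminus [\ker T]$ and a representative $v \in \Rb^d \setminus \{0\}$ with $[v] = x$. Since $x \notin [\ker T]$ we have $\wt{T}(v) \neq 0$, and $\wt{T}_n(v) \to \wt{T}(v)$, so $\wt{T}_n(v) \neq 0$ for all large $n$; hence $T_n(x) = [\wt{T}_n(v)]$ is defined for large $n$, and continuity of the quotient map $\Rb^d \setminus \{0\} \to \Pb(\Rb^d)$ gives $T_n(x) = [\wt{T}_n(v)] \to [\wt{T}(v)] = T(x)$.

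For uniform convergence on a compact set $K \subset \Pb(\Rb^d) \setminus [\ker T]$, I would pass to the double cover: let $\wh{K} \subset \Rb^d$ be the preimage of $K$ under $v \mapsto [v]$ intersected with the unit sphere, which is compact. Then $\wt{T}(v) \neq 0$ for every $v \in \wh{K}$, so by compactness there is $c > 0$ with $\norm{\wt{T}(v)} \geq c$ for all $v \in \wh{K}$. Combined with $\norm{\wt{T}_n(v) - \wt{T}(v)} \leq \norm{\wt{T}_n - \wt{T}} \to 0$ uniformly on $\wh{K}$, for large $n$ both $\wt{T}_n(v)$ and $\wt{T}(v)$ lie in the compact annulus $\{w : c/2 \leq \norm{w} \leq 1\}$ for all $v \in \wh{K}$; since the quotient map $\Rb^d \setminus \{0\} \to \Pb(\Rb^d)$ is uniformly continuous on that annulus, uniform convergence of $\wt{T}_n(v)$ to $\wt{T}(v)$ upgrades to uniform convergence of $T_n(x)$ to $T(x)$ on $K$.

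There is no real obstacle here; the only point requiring a little care is that elements of $\Pb(\End(\Rb^d))$ are projective classes, so one must commit to lifts and check that the sign ambiguity is harmless, and one must use compactness (twice: to produce the lower bound $c$ and to get uniform continuity of normalization on the resulting annulus) rather than just pointwise continuity.
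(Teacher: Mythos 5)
Your argument is correct: the paper states this as an unproved observation, and your lift-and-normalize proof (unit-norm representatives with the sign ambiguity handled, a compactness lower bound $\norm{\wt{T}(v)}\geq c$ on the unit-sphere preimage of $K$, and uniform continuity of the projection $\Rb^d\setminus\{0\}\rightarrow\Pb(\Rb^d)$ on the resulting annulus) is exactly the standard verification the author is implicitly relying on. Nothing is missing.
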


\subsection{The faces and extreme points of a properly convex domain} 

\begin{definition} Suppose that $\Omega \subset \Pb(\Rb^d)$ is a properly convex domain. For $x \in \overline{\Omega}$ let $F_\Omega(x)$ denote the \emph{(open) face} of $x$, that is 
\begin{align*}
F_\Omega(x) = \{ x\} \cup \left\{ y \in \overline{\Omega} : \text{ $\exists$ an open line segment in $\overline{\Omega}$ containing $x$ and $y$} \right\}.
\end{align*}
 If $x \in \partial \Omega$ and $F_\Omega(x)=\{x\}$, then $x$ is called an \emph{extreme point of $\Omega$}. Finally, let 
 \begin{align*}
 \Ec_\Omega \subset \partial\Omega
 \end{align*}
 denote the set of all extreme points. 
\end{definition}

These subsets have the following basic properties. 

\begin{observation}\label{obs:faces} Suppose that $\Omega \subset \Pb(\Rb^d)$ is a properly convex domain. 
\begin{enumerate}
\item If $x \in \Omega$, then $F_\Omega(x) = \Omega$. 
\item $F_\Omega(x)$ is open in its span.
\item $y \in F_\Omega(x)$ if and only if $x \in F_\Omega(y)$ if and only if $F_\Omega(x) = F_\Omega(y)$.
\item if $y \in \partial F_\Omega(x)$, then $F_\Omega(y) \subset \partial F_\Omega(x)$ and $F_{\Omega}(y) = F_{F_\Omega(x)}(y)$. 
\item If $x, y \in \overline{\Omega}$ and $z \in (x,y)$, then
\begin{align*}
(p,q) \subset F_\Omega(z)
\end{align*}
for all $p \in  F_\Omega(x)$ and $q \in F_\Omega(y)$. 
\end{enumerate}
\end{observation}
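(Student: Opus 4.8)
The plan is to work in an affine chart of $\Pb(\Rb^d)$ in which $\overline{\Omega}$ is a compact convex body in $\Rb^d$, so that the line segments, relative interiors, and faces in the statement become the ordinary Euclidean ones, and then to reduce all five assertions to standard facts about the facial structure of a compact convex set. Two of them are essentially formal. For~(1): when $x\in\Omega$ we have $\Omega=\interior(\overline{\Omega})$, and the classical fact that $[x,z)\subset\interior(\overline{\Omega})$ whenever $z\in\overline{\Omega}$ gives both $\Omega\subset F_\Omega(x)$ (slightly extend a segment joining two interior points) and that $F_\Omega(x)$ cannot meet $\partial\Omega$ (a point of $\partial\Omega$ strictly between $x$ and a point of $\overline{\Omega}$ would lie in $\Omega$), so $F_\Omega(x)=\Omega$. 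For the first equivalence in~(3): for $x\neq y$ the defining condition ``some open segment contained in $\overline{\Omega}$ contains both $x$ and $y$'' is symmetric in $x$ and $y$, so $y\in F_\Omega(x)\iff x\in F_\Omega(y)$.

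The remaining content --- the equality $F_\Omega(x)=F_\Omega(y)$ in~(3) and parts~(2),~(4),~(5) --- I would deduce from a single dictionary lemma: for each $x\in\overline{\Omega}$ one has $F_\Omega(x)=\relint(\widehat{F}(x))$, where $\widehat{F}(x)$ is the smallest face of $\overline{\Omega}$ (in the usual ``extreme subset'' sense) containing $x$. Granting this, one argues as follows. Part~(2): the relative interior of a convex set is relatively open in its affine span, and $\Spanset F_\Omega(x)=\Spanset\widehat{F}(x)$ since a convex set and its relative interior have the same affine hull. Part~(3): if $y\in F_\Omega(x)=\relint(\widehat{F}(x))$ then $\widehat{F}(y)=\widehat{F}(x)$ --- indeed $\widehat{F}(y)\subset\widehat{F}(x)$ because $y$ lies in the face $\widehat{F}(x)$, and for any $z\in\widehat{F}(x)$ there is $z'\in\widehat{F}(x)$ with $y\in(z,z')$, so the extreme-subset property of $\widehat{F}(y)$ forces $z\in\widehat{F}(y)$ --- hence $F_\Omega(y)=F_\Omega(x)$. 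Part~(4): a point $y\in\partial F_\Omega(x)$ lies in $\widehat{F}(x)\setminus\relint(\widehat{F}(x))$, hence in a proper face of $\widehat{F}(x)$; since the faces of $\widehat{F}(x)$ are exactly the faces of $\overline{\Omega}$ contained in $\widehat{F}(x)$, the face $\widehat{F}(y)$ is a proper face of $\widehat{F}(x)$, which gives $F_\Omega(y)=\relint(\widehat{F}(y))\subset\partial F_\Omega(x)$ and, applying the dictionary inside $\widehat{F}(x)$, also $F_{F_\Omega(x)}(y)=F_\Omega(y)$. Part~(5): if $z\in(x,y)$ then $x,y\in\widehat{F}(z)$ and the only face of $\widehat{F}(z)$ containing both is $\widehat{F}(z)$ itself, since a proper one would be convex and hence contain $z$, contradicting $z\in\relint(\widehat{F}(z))$; so for $p\in F_\Omega(x)\subset\widehat{F}(x)\subset\widehat{F}(z)$ and $q\in F_\Omega(y)\subset\widehat{F}(z)$, a point $w\in(p,q)$ lying in a proper face $G$ of $\widehat{F}(z)$ would force $p,q\in G$, then $\widehat{F}(x),\widehat{F}(y)\subset G$ (again by the extreme-subset property, using $p\in\relint\widehat{F}(x)$ and $q\in\relint\widehat{F}(y)$), hence $x,y\in G$ --- impossible; thus $(p,q)\subset\relint(\widehat{F}(z))=F_\Omega(z)$.

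The real work, and the step I expect to be the main obstacle, is the dictionary lemma, more precisely the inclusion $F_\Omega(x)\subset\relint(\widehat{F}(x))$, which is where the geometry of $\overline{\Omega}$ actually enters. Given an open segment $(a,b)\subset\overline{\Omega}$ containing both $x$ and $y$, the extreme-subset property of $\widehat{F}(x)$ applied at $x\in(a,b)$ forces $[a,b]\subset\widehat{F}(x)$; if $y$ lay in the relative boundary of $\widehat{F}(x)$ it would lie in a proper face $G$ of $\widehat{F}(x)$, and the same property applied at $y\in(a,b)\cap G$ would give $[a,b]\subset G$, so $x\in G$, contradicting the minimality of $\widehat{F}(x)$; hence $y\in\relint(\widehat{F}(x))$. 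The reverse inclusion is immediate: two points of the relatively open convex set $\relint(\widehat{F}(x))\subset\overline{\Omega}$ lie on an open segment contained in that set.

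Beyond this, the only input I would quote is the standard fact that $x\in\relint(\widehat{F}(x))$ (used above and in the deductions; it has a short independent proof), together with routine facts that $\widehat{F}(x)$ is well defined and closed and that a face of a face is a face; the degenerate cases ($x=y$, $p=x$, $q=y$) are trivial. If one preferred to avoid quoting convex-geometry folklore, part~(5) can instead be proved directly by choosing $p',q'\in\overline{\Omega}$ with $x\in(p,p')$ and $y\in(q,q')$, forming the convex hull $K\subset\overline{\Omega}$ of the finitely many resulting points, and checking via an affine-span computation that both $z$ and the chosen $w\in(p,q)$ lie in $\relint(K)$, so that the line through them extends inside $\overline{\Omega}$ past both points --- the recurring device being to extend the relevant segments on both sides at once, so as never to push past an extreme point --- and then the equality $F_\Omega(x)=F_\Omega(y)$ in~(3) follows from~(5) once one knows faces are relatively open, i.e. once one has~(2).
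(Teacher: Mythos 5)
Your proof is correct, and it takes the route the paper implicitly intends: the paper's own ``proof'' is just the assertion that these are simple consequences of convexity, and your dictionary lemma $F_\Omega(x)=\relint(\widehat F(x))$ (with $\widehat F(x)$ the minimal extreme face) is the standard way to make that precise. All five parts, including the key inclusion $F_\Omega(x)\subset\relint(\widehat F(x))$ and the degenerate cases, are handled correctly.
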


\begin{proof} These are all simple consequences of convexity. \end{proof}

We will also use the following results about the action of the automorphism group.

\begin{proposition}\cite[Proposition 5.6]{IZ2019}\label{prop:dynamics_of_automorphisms}
Suppose that $\Omega \subset \Pb(\Rb^d)$ is a properly convex domain, $p_0 \in \Omega$, and $(g_n)_{n \geq 1}$ is a sequence in $\Aut(\Omega)$ such that 
\begin{enumerate}
\item $g_n (p_0) \rightarrow x \in \partial \Omega$, 
\item $g_n^{-1} (p_0) \rightarrow y \in \partial \Omega$, and
\item $g_n$ converges in $\Pb(\End(\Rb^d))$ to $T \in \Pb(\End(\Rb^d))$. 
\end{enumerate}
Then $\image T \subset \Spanset  F_\Omega(x)$, $\Pb(\ker T) \cap \Omega = \emptyset$, and $y \in \Pb(\ker T)$. 
\end{proposition}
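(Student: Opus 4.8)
I would prove Proposition~\ref{prop:dynamics_of_automorphisms} by a compactness-and-cross-ratio argument. The three claimed conclusions are: $\image(T) \subset \Spanset F_\Omega(x)$, $[\ker T] \cap \Omega = \emptyset$, and $y \in [\ker T]$. I would establish them in that logical order, since the third essentially reuses the machinery of the first applied to $g_n^{-1}$.

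\textbf{Step 1: $[\ker T] \cap \Omega = \emptyset$.} Suppose toward a contradiction that some $z \in \Omega$ lies in $[\ker T]$. Pick a point $w \in \Omega$ with $w \notin [\ker T]$ (possible since $\ker T$ is a proper subspace and $\Omega$ is open). Along the segment $[z,w] \subset \Omega$, choose points $z_n \to z$ with $z_n \notin [\ker T]$; then $g_n(z_n)$ is forced to leave every compact subset of $\Omega$ because, writing things in an affine chart, $H_\Omega(g_n z_n, g_n w) \to \infty$ while $g_n w \to T(w) \in \overline{\Omega}$ (using that $g_n w = g_n(w) \to T(w)$ by the limits-of-linear-maps observation, since $w \notin [\ker T]$, wait — I need $w\notin[\ker T]$, which I arranged). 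More carefully: the key is that a sequence in $\Aut(\Omega)$ whose limit $T$ kills an interior point cannot have $g_n^{-1}(p_0)$ converging to a boundary point while staying an isometry — I would instead argue directly that $H_\Omega$ is $\Aut(\Omega)$-invariant (Proposition~\ref{prop:hilbert_basic}), so $H_\Omega(g_n z_n, g_n w) = H_\Omega(z_n, w) \to H_\Omega(z,w) < \infty$; but $g_n(z_n) \to T(z) $ is problematic because $z_n$ approaches $[\ker T]$, so one must control the rate. The clean route is: by invariance of the Hilbert metric, $g_n$ maps the ball $B_{H_\Omega}(z, R)$ onto $B_{H_\Omega}(g_n z', R)$ for a fixed nearby $z'\notin[\ker T]$; but as $z_n \to z \in [\ker T]$, the images $g_n(z_n)$ can be shown to converge to a point of $\partial\Omega$ using the cross-ratio formula, contradicting that a whole fixed-radius metric ball around $z$ would have to map inside $\Omega$ with images staying in a compact set. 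I expect this to need the standard lemma that if $g_n \to T$ in $\Pb(\End(\Rb^d))$ and $g_n \in \Aut(\Omega)$, then for any $z$ with $[z] \in [\ker T] \cap \overline{\Omega}$, the convergence degenerates — this is exactly the content one extracts from the Hilbert metric being proper. I would cite or reprove this carefully.

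\textbf{Step 2: $\image(T) \subset \Spanset F_\Omega(x)$.} Take any $z \in \Omega \setminus [\ker T]$. Then $g_n(z) \to T(z)$, and since $H_\Omega(g_n z, g_n p_0) = H_\Omega(z, p_0)$ is bounded while $g_n p_0 \to x$, the Hilbert geometry forces $T(z)$ to lie in $\overline{F_\Omega(x)}$ — concretely, the segment $[g_n p_0, g_n z]$ stays within bounded Hilbert distance of the geodesic of fixed length, and a convexity argument (Observation~\ref{obs:faces}(5), plus the fact that $g_n p_0 \to x$) shows any limit point of $g_n(z)$ lies in $\overline{F_\Omega(x)}$, hence in $\Spanset F_\Omega(x)$. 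Since $\image(T)$ is spanned by $\{T(z) : z \in \Omega \setminus [\ker T]\}$ (as $\Omega$ is open and hence spans $\Rb^d$, and the complement of $[\ker T]$ in $\Omega$ still spans), we get $\image(T) \subset \Spanset F_\Omega(x)$.

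\textbf{Step 3: $y \in [\ker T]$.} Apply the reasoning dual to Step~1/Step~2 to the sequence $g_n^{-1}$. After passing to a subsequence, $g_n^{-1} \to S$ in $\Pb(\End(\Rb^d))$ for some $S$, with $S(p_0)$ related to $y$. The relation $g_n g_n^{-1} = \id$ passes to a relation between $T$ and $S$ in the closed semigroup $\overline{\Aut(\Omega)}^{\End}$: in particular, composing, one finds $\image(S) \subset [\ker T]$-complement is impossible, forcing $\image(S) \cap [\ker T] \neq \{0\}$ in the right way; more precisely $T \circ S$ and $S \circ T$ are scalar multiples of limits of the identity, which are rank-one or force $\image(S) \subset \ker T$ when $T$ is not invertible. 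Since $y$ is a limit point of $g_n^{-1}(p_0) = $ (something in $\image S$-direction), and $\image(S) \subset \ker T$, we conclude $y \in [\ker T]$. The main obstacle I anticipate is Step~3: correctly extracting, from $g_n g_n^{-1} = \id$ in the projective endomorphism compactification, that $\image(S) \subset \ker T$ — one has to be careful because the identity's projective limits need not be the identity, and rank considerations ($\rank T + \rank S \leq d$ may fail naively) require the Hilbert-metric input from Step~1 to rule out degenerate configurations. A cleaner alternative for Step~3 is to directly apply Step~2's argument to $g_n^{-1}$ to locate $S(p_0)$, then observe $p_0 = g_n(g_n^{-1} p_0)$ with $g_n^{-1}p_0 \to y$; if $y \notin [\ker T]$ then $g_n(g_n^{-1}p_0) \to T(y) \in \partial\Omega$ (by Step~2-type reasoning, since $g_n^{-1}p_0 \to y \in \partial\Omega$ gives $T(y) \in \overline{F_\Omega(y)} \subset \partial\Omega$), contradicting $g_n(g_n^{-1}p_0) = p_0 \in \Omega$. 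This last formulation is the one I would write up, and it makes Step~3 a short corollary of Steps~1 and~2.
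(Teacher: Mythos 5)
First, note that the paper does not actually prove this proposition: it is quoted verbatim from \cite[Proposition 5.6]{IZ2019} (only the companion Proposition 5.7 is reproved here). So your argument has to stand on its own, and its weight rests entirely on Step 1, which is where the genuine gap is. You never complete the proof that $[\ker T]\cap\Omega=\emptyset$: you cycle through three false starts and end by saying you would ``cite or reprove this carefully.'' Worse, the contradiction you gesture at does not exist. If $z\in[\ker T]\cap\Omega$, it is perfectly consistent for the images $g_n(B_{H_\Omega}(z,R))$ to exit every compact subset of $\Omega$ --- indeed they must, since $g_n p_0\to\partial\Omega$ forces $g_n$ to diverge in $\Aut(\Omega)$ and the action is proper --- so ``images staying in a compact set'' is not something you can contradict. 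The clean proof does not use the Hilbert metric at all: let $C\subset\Rb^d$ be an open convex cone over $\Omega$, choose lifts $\widetilde{g}_n$ preserving $C$ and normalized so that $\widetilde{g}_n\to\widetilde{T}\neq 0$ in $\End(\Rb^d)$; then $\widetilde{T}(\overline{C})\subset\overline{C}$. If some $\widetilde{z}\in C$ had $\widetilde{T}\widetilde{z}=0$, then for every $v$ with $\widetilde{z}\pm v\in C$ one gets $\pm\widetilde{T}v=\widetilde{T}(\widetilde{z}\pm v)\in\overline{C}$, and proper convexity ($\overline{C}\cap-\overline{C}=\{0\}$) forces $\widetilde{T}v=0$ for all $v$ in an open set, hence $\widetilde{T}=0$, a contradiction. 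Without something like this, Step 1 is missing.

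Steps 2 and 3 are essentially sound, with one local repair needed. Step 2 correctly reduces to the standard face lemma (if $a_n\to a\in\partial\Omega$, $b_n\to b$, and $\sup_n H_\Omega(a_n,b_n)<\infty$, then $b\in\overline{F_\Omega(a)}$), and the spanning argument for $\image(T)$ is fine. In Step 3 your ``cleaner alternative'' is the right move, but the justification ``$T(y)\in\overline{F_\Omega(y)}$ by Step 2--type reasoning'' is wrong: Step 2 applies to points of $\Omega$ via $H_\Omega(g_n z,g_n p_0)=H_\Omega(z,p_0)<\infty$, and $y$ is a boundary point, so there is no such estimate. What you should say instead is that if $y\notin[\ker T]$ then local uniform convergence gives $p_0=\lim_n g_n(g_n^{-1}p_0)=T(y)\in[\image T]\subset[\Spanset F_\Omega(x)]$, and $[\Spanset F_\Omega(x)]\cap\Omega=\emptyset$ (since $F_\Omega(x)$ is open in its span and contained in $\partial\Omega$, any point of $\Omega$ in that span would put points of $F_\Omega(x)$ on an open segment into $\Omega$), contradicting $p_0\in\Omega$. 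Your first, semigroup-based attempt at Step 3 ($T\circ S$ being a limit of the identity, rank bookkeeping) should be discarded: as you yourself note, the projective limit of $g_ng_n^{-1}=\id$ carries no usable information once the normalizations of the two factors degenerate independently.
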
 

In the case of ``non-tangential'' convergence we can say more. 

\begin{proposition}\cite[Proposition 5.7]{IZ2019}\label{prop:dynamics_of_automorphisms_2}
Suppose that $\Omega \subset \Pb(\Rb^d)$ is a properly convex domain, $p_0 \in \Omega$, $x \in \partial \Omega$, $(p_n)_{n \geq 1}$ is a sequence in $[p_0, x)$ converging to $x$, and $(g_n)_{n \geq 1}$ is a sequence in $\Aut(\Omega)$ such that 
\begin{align*}
\sup_{n \geq 1} H_\Omega(g_n (p_0), p_n) < + \infty.
\end{align*}
If $g_n$ converges in $\Pb(\End(\Rb^d))$ to $T \in \Pb(\End(\Rb^d))$, then
\begin{align*}
T(\Omega) = F_\Omega(x)
\end{align*}
and hence $\image T = \Spanset  F_\Omega(x)$. 
\end{proposition}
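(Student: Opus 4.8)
\textbf{Proof proposal for Proposition~\ref{prop:dynamics_of_automorphisms_2}.}

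The plan is to leverage Proposition~\ref{prop:dynamics_of_automorphisms} to get the inclusion $\image(T) \subset \Spanset F_\Omega(x)$ for free, and then to prove the reverse inclusion $\Spanset F_\Omega(x) \subset \image(T)$ using the hypothesis that $g_n p_0$ stays within bounded Hilbert distance of the points $p_n$ on the segment $[p_0, x)$. The key geometric consequence of this ``non-tangential'' hypothesis is that $T(p_0) = x$ (not merely that $T(p_0)$ lies in $\overline{F_\Omega(x)}$): since $H_\Omega(g_n p_0, p_n) \leq C$ for all $n$ and $p_n \to x$, and since the image point $T(p_0) = \lim g_n(p_0)$ is well-defined because $p_0 \in \Omega$ and $[\ker T] \cap \Omega = \emptyset$ by Proposition~\ref{prop:dynamics_of_automorphisms}, the limit must equal $x$. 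So I would first establish $T(p_0) = x$, hence $x \in \image(T)$.

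Next I would promote this to showing that $T$ maps a full neighborhood of $p_0$ in $\Omega$ into $F_\Omega(x)$, and that the image of this neighborhood is ``large'' in $F_\Omega(x)$. The idea: fix any point $q \in F_\Omega(x)$, i.e. $q$ lies in the open face of $x$. I want to produce a point $q' \in \Omega$ with $T(q') = q$, which would force $q \in \image(T)$; taking the span over all such $q$ then yields $\Spanset F_\Omega(x) \subset \image(T)$. To find $q'$, I would run the same bounded-distance argument along a different segment: because $q \in F_\Omega(x)$, one can choose $p_0' \in \Omega$ near $p_0$ and points $p_n' \in [p_0', x)$ with $p_n' \to x$ in such a way that $\sup_n H_\Omega(g_n p_0', p_n')$ is still finite — this uses that the Hilbert metric is $\Aut(\Omega)$-invariant together with the fact that $p_0, p_0'$ are a bounded distance apart, so $H_\Omega(g_n p_0, g_n p_0') $ is constant, and the segments $[p_0,x)$ and $[p_0',x)$ are asymptotic. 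Then $T(p_0') = x$ as well. Varying $p_0'$ over a small open set around $p_0$ and controlling where these points map, combined with the fact that $T$ is a linear map and $F_\Omega(x)$ is open in its span (Observation~\ref{obs:faces}(2)), I would show the image of $T$ on a neighborhood of $p_0$ is a relatively open subset of $F_\Omega(x)$; a relatively open subset of a convex set spans the same subspace as the whole set, giving $\Spanset F_\Omega(x) \subset \image(T)$.

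Alternatively — and this is probably cleaner — once $T(p_0) = x$ is known, I would argue by dimension count: by Proposition~\ref{prop:dynamics_of_automorphisms} we already know $\image(T) \subset \Spanset F_\Omega(x) =: W$, and the composition $g_n^{-1}$, suitably normalized, converges to some $S$ with $\image(S) \subset \Spanset F_\Omega(y)$; but the relation between $T$ and $S$ (coming from $g_n g_n^{-1} = \id$) forces $\rank(T) \geq \dim W$, hence $\image(T) = W$. Concretely, the non-tangential hypothesis should guarantee that $T$ restricted to a complement of $[\ker T]$ is injective onto $W$ with the right rank, because otherwise the bounded-distance condition $H_\Omega(g_n p_0, p_n) \leq C$ would be violated — a degenerate limit map of smaller rank would collapse the segment $[p_0,x)$ too fast, pushing $g_n p_0$ to escape to the boundary ``faster'' than $p_n$ does.

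The main obstacle I anticipate is making the last rank/non-degeneracy argument rigorous: one needs to quantify how the Hilbert distance $H_\Omega(g_n p_0, p_n)$ relates to the singular value gaps of lifts of $g_n$, and to show that a drop in rank of the limit $T$ below $\dim \Spanset F_\Omega(x)$ is incompatible with the uniform bound. This requires a careful analysis of the Cartan/singular-value decomposition of $g_n$ and how the Hilbert metric detects the ``flag'' along which $g_n$ expands, together with the fact that $p_n$ approaches $x$ while staying on the straight segment $[p_0, x)$ rather than curving toward a smaller face. Everything else — the inclusion from Proposition~\ref{prop:dynamics_of_automorphisms}, invariance of $H_\Omega$, and openness of faces in their span — is routine input.
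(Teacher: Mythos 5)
The first half of your plan (the inclusion $\image(T)\subset \Spanset F_\Omega(x)$ via Proposition~\ref{prop:dynamics_of_automorphisms}) is fine, but the reverse inclusion --- which is the entire content of the proposition --- is not actually established by either of your two routes. In the first route, the claim that $T$ maps a neighborhood of $p_0$ onto a relatively open subset of $F_\Omega(x)$ is circular: if ${\rm rank}(T)<\dim \Spanset F_\Omega(x)$, then the image of any open set lies in a proper subspace of $\Spanset F_\Omega(x)$ and is never relatively open, so you would need the conclusion to justify the step. (Also, the preliminary claim $T(p_0)=x$ is too strong: bounded Hilbert distance from $p_n$ only forces $T(p_0)\in F_\Omega(x)$, not $T(p_0)=x$; fortunately nothing essential depends on this.) In the second route, the identity $g_n g_n^{-1}=\id$ does not survive passage to $\Pb(\End(\Rb^d))$: if $S=\lim g_n^{-1}$, the normalizations blow up independently and $S\circ T$ can be the zero map, so no lower bound on ${\rm rank}(T)$ follows. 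You correctly flag the rank/non-degeneracy step as the obstacle, but the singular-value analysis you propose is not needed and is not how the non-tangential hypothesis actually enters.

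The missing idea is to pull target points back rather than push source points forward. Fix $y\in F_\Omega(x)$. Since $y$ and $x$ lie in the same open face, one can choose $y_n\in[p_0,y)$ with $\sup_n H_\Omega(y_n,p_n)<\infty$; combined with the hypothesis and $\Aut(\Omega)$-invariance of $H_\Omega$ this gives $\sup_n H_\Omega(g_n^{-1}y_n,p_0)<\infty$. Metric balls of $(\Omega,H_\Omega)$ are relatively compact in $\Omega$, so a subsequence $g_{n_j}^{-1}y_{n_j}$ converges to some $q$ in the \emph{interior} $\Omega$, where $[\ker T]$ does not meet and the convergence $g_n\to T$ is locally uniform. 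Then $T(q)=\lim_j g_{n_j}\bigl(g_{n_j}^{-1}y_{n_j}\bigr)=\lim_j y_{n_j}=y$. Hence $F_\Omega(x)\subset T(\Omega)$, and taking spans gives $\Spanset F_\Omega(x)\subset\image(T)$. This is the paper's argument; it uses the non-tangential hypothesis only through the existence of the bounded-distance points $y_n$, and it avoids any Cartan decomposition analysis entirely.
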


Proposition 5.7 in~\cite{IZ2019} is stated differently, so we provide the proof. 

\begin{proof} Proposition~\ref{prop:dynamics_of_automorphisms} implies that $T(\Omega) \subset F_\Omega(x)$ so we just have to prove that $T(\Omega) \supset F_\Omega(x)$. 

Fix $y \in F_\Omega(x)$. Then we can pick a sequence $(y_n)_{n \geq 1}$ in  $[p_0, y)$ such that 
\begin{align*}
\sup_{n \geq 1} H_\Omega(y_n, p_n) <\infty.
\end{align*}
Thus 
\begin{align*}
\sup_{n \geq 1} H_\Omega(g_n^{-1} (y_n), p_0) <\infty.
\end{align*}
So there exists $n_j \rightarrow\infty$ so that the limit 
\begin{align*}
q:=\lim_{j \rightarrow \infty} g_{n_j}^{-1}(y_{n_j})
\end{align*}
exists in $\Omega$. Notice that $q \notin \Pb(\ker T)$ by Proposition~\ref{prop:dynamics_of_automorphisms} and so the ``moreover'' part of Observation~\ref{obs:limit of endomorphisms} implies that 
\begin{align*}
T(q) = \lim_{n \rightarrow \infty} g_n(q) = \lim_{j \rightarrow \infty} g_{n_j}(q)=\lim_{j \rightarrow \infty} g_{n_j}\left( g_{n_j}^{-1}(y_{n_j}) \right)= \lim_{j \rightarrow \infty}y_{n_j} = y.
\end{align*}

Since $y$ was arbitrary, $F_\Omega(x) \subset T(\Omega)$.

\end{proof}

\subsection{Proximal elements}\label{sec:prox_elements_basics}

In this section we recall some basic properties of proximal elements. For more background we refer the reader to~\cite{BQ2016}.

\begin{definition} Suppose that $F : M \rightarrow M$ is a $C^1$ map of a manifold $M$. Then a fixed point $x \in M$ of $F$ is \emph{attractive} if $\abs{\lambda} < 1$ for every eigenvalue $\lambda$ of $d(F)_x : T_x M \rightarrow T_x M$.
\end{definition}

A straightforward calculation provides the following characterization of proximality. 

\begin{observation}\label{obs:att_fixed_pt} Suppose that $g \in \PGL_d(\Rb)$ and $x$ is a fixed point of the $g$ action on $\Pb(\Rb^d)$. Then the following are equivalent: 
\begin{enumerate}
\item $x$ is an attractive fixed point of $g$, 
\item $g$ is proximal and $x = \ell_g^+$. 
\end{enumerate}
\end{observation}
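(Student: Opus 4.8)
The plan is to prove both implications by a direct linear-algebra computation in an affine chart. The key observation is that $x$ is a fixed point of $g$ on $\Pb(\Rb^d)$, so we may pick a lift $\phi \in \SL_d^{\pm}(\Rb)$ of $g$ and an eigenvector $v$ spanning $x$ with $\phi v = \mu v$ for some nonzero $\mu \in \Rb$. I would first replace $\phi$ by $\frac{1}{\mu}\phi$ so that $\phi v = v$; note this changes the eigenvalues only by a global scalar, so $\phi/\mu$ need not lie in $\SL_d^{\pm}(\Rb)$, but the proximality condition $\lambda_1(g) > \lambda_2(g)$ and the identity $x = \ell_g^+$ are unaffected, since they only concern ratios and the top eigenline. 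Then choose a $\phi$-invariant complement $W$ to $\Rb v$ if one exists, or more robustly work directly: writing $\Rb^d = \Rb v \oplus W$ for an arbitrary complement $W$, the affine chart $\{[v + w] : w \in W\}$ is centered at $x$ and is a neighborhood of $x$ in $\Pb(\Rb^d)$, and $g$ acts on it by $w \mapsto (\text{stuff})$; the differential $d(g)_x : W \to W$ I would identify explicitly.

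The cleanest route: decompose $\Rb^d = E_1 \oplus E'$ where $E_1$ is the generalized eigenspace of $\phi$ for the eigenvalue $1$ (which contains $v$) and $E'$ is the sum of the remaining generalized eigenspaces; both are $\phi$-invariant. Actually, to pin down the differential at the \emph{line} $x = [v]$ rather than at a point fixed by the linear map, I would use the standard fact that for a fixed point $[v]$ of $[\phi]$ with $\phi v = v$, and any $\phi$-invariant complement $W$ to $\Rb v$, the derivative $d([\phi])_{[v]}$ on $T_{[v]}\Pb(\Rb^d) \cong \Hom(\Rb v, W) \cong W$ is exactly $\phi|_W$. So I would take $W$ to be a $\phi$-invariant complement to $\Rb v$ (the sum of all generalized eigenspaces for eigenvalues $\ne$ the one making $v$ fixed — but here I must be careful: $v$ could a priori lie in a higher-dimensional eigenspace or a Jordan block for eigenvalue $1$, in which case no invariant complement to $\Rb v$ alone exists). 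To handle this, I would phrase the computation via the generalized eigenspace $E_1 \ni v$ and its invariant complement $E'$: then $d([\phi])_{[v]}$ has eigenvalues consisting of (a) the eigenvalues of $\phi|_{E'}$ divided by $1$, i.e.\ the eigenvalues $\ne 1$ of $\phi$, and (b) the eigenvalues of the induced map on $E_1 / \Rb v$, all of which equal $1$ (since $\phi|_{E_1}$ is unipotent). Here I am normalizing so $\phi v = v$, equivalently dividing the original lift by $\lambda_1(g)$ in the proximal case.

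Now the two implications. For $(2) \Rightarrow (1)$: if $g$ is proximal with $x = \ell_g^+$, then after normalizing $\phi$ so the top eigenvalue is $1$, we have $\phi v = v$ with $v$ spanning the $1$-eigenline, $E_1 = \Rb v$ (since proximality forces the top eigenvalue to be simple and the generalized eigenspace to be one-dimensional — this needs the observation that $\lambda_1 > \lambda_2$ implies the generalized eigenspace equals the eigenspace and is a line), and every other eigenvalue has modulus $< 1$. Hence $d([\phi])_{x}$ has all eigenvalues of modulus $< 1$, so $x$ is attractive. For $(1) \Rightarrow (2)$: suppose $x$ is attractive. Pick any eigenvalue $\mu$ of a lift with $\phi v = \mu v$, normalize to $\phi v = v$. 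If the generalized eigenspace $E_1$ of $\phi$ for eigenvalue $1$ were more than one-dimensional, then $d([\phi])_x$ would have $1$ as an eigenvalue (from $E_1/\Rb v$), contradicting attractiveness; so $E_1 = \Rb v$. Then all other eigenvalues $\nu$ of $\phi$ satisfy $|\nu| < 1$ (these are exactly the eigenvalues of $d([\phi])_x = \phi|_{E'}$). Scaling back, the original lift has a simple top-modulus eigenvalue realized at the line $v = x$, i.e.\ $g$ is proximal and $x = \ell_g^+$.

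\textbf{Main obstacle.} The one genuinely delicate point is the behavior along the eigenvalue-$1$ generalized eigenspace after normalization: I must argue carefully that (i) proximality forces $E_1$ to be exactly the line $\Rb v$ (no Jordan block, no extra eigenvectors), and (ii) conversely, if $\dim E_1 > 1$ then the induced map on $E_1/\Rb v$ contributes eigenvalue $1$ to the differential, breaking attractiveness. Getting the identification $d([\phi])_{[v]} \cong \phi|_{E'} \oplus (\text{induced map on } E_1/\Rb v)$ right — in particular that the eigenvalues of the projective differential are precisely the ratios $\nu/1$ over all generalized eigenvalues $\nu \ne$ (eigenvalue of $v$), together with the non-fixed directions inside $E_1$ — is the crux; the rest is bookkeeping about scaling lifts, which I would dispatch briefly.
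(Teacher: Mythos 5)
Your argument is correct, and it is precisely the ``straightforward calculation'' the paper alludes to without writing out: identify $d([\phi])_{[v]}$ with the induced map on $\Rb^d/\Rb v$ after normalizing $\phi v = v$, and compare moduli of eigenvalues. Your care with the case where $v$ lies in a nontrivial Jordan block (working with $E_1/\Rb v$ rather than assuming an invariant complement to $\Rb v$) is exactly the right way to handle the only delicate point.
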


Next we explain the global dynamics of a proximal element. 

\begin{definition}
If $g \in \PGL_d(\Rb)$ is proximal, then define $H_g^- \in \Gr_{d-1}(\Rb^d)$ to be the unique $g$-invariant linear hyperplane with 
\begin{align*}
\ell_g^+ \oplus H_g^- = \Rb^d.
\end{align*} 
If $g$ is bi-proximal, then also define $H_g^+ := H^-_{g^{-1}}$. 
\end{definition}

When $g \in \PGL_d(\Rb)$ is proximal, $H_g^-$ is usually called the repelling hyperplane of $g$. This is motivated by the following observation.

\begin{observation}\label{obs:iterates_of_proximal} If $g \in \PGL_d(\Rb)$ is proximal, then 
\begin{align*}
T_g := \lim_{n \rightarrow \infty} g^n
\end{align*}
exists in $\Pb(\End(\Rb^d))$. Moreover, $\image T_g =\ell_g^+$, $\ker T_g = H_g^-$, and
\begin{align*}
\image T_g  \oplus \ker T_g = \Rb^d.
\end{align*}
Hence 
\begin{align*}
\ell_g^+ = \lim_{n \rightarrow \infty} g^nx
\end{align*}
for all $x \in \Pb(\Rb^d) \setminus \Pb(H^-_g)$. 
\end{observation}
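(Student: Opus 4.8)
The plan is to partially diagonalize a lift of $g$ and renormalize by the top eigenvalue, reducing everything to the elementary fact that a contraction raised to the $n$-th power tends to $0$. Fix a lift $\wt g \in \SL_d^{\pm}(\Rb)$ of $g$ and set $\lambda := \lambda_1(g)$, the spectral radius of $\wt g$. First I would observe that proximality, i.e.\ $\lambda_1(g) > \lambda_2(g)$, forces the top eigenvalue of $\wt g$ to be real and algebraically simple: a non-real eigenvalue of modulus $\lambda$ would appear together with its complex conjugate, and an eigenvalue of modulus $\lambda$ with a nontrivial Jordan block would have algebraic multiplicity at least two, and in either case $\lambda_2(g) = \lambda$, a contradiction. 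Hence there are $\epsilon \in \{\pm 1\}$ and $v^+ \in \Rb^d \setminus \{0\}$ with $\wt g v^+ = \epsilon \lambda v^+$ and $[v^+] = \ell_g^+$ (this is the definition of $\ell_g^+$).

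Next I would isolate the rest of the spectrum. Let $W \subset \Rb^d$ be the sum of the generalized eigenspaces of $\wt g$ for the eigenvalues of modulus strictly less than $\lambda$, so that $\Rb^d = \Rb v^+ \oplus W$ with $W$ being $\wt g$-invariant and $\dim W = d-1$. Since $W$ is then a $g$-invariant hyperplane complementary to $\ell_g^+$, the defining (uniqueness) property of $H_g^-$ gives $W = H_g^-$. With respect to this decomposition $\wt g = (\epsilon\lambda) \oplus B$ with $B := \wt g|_W$ of spectral radius $\rho(B) = \lambda_2(g) < \lambda$, so that
\begin{align*}
(\epsilon\lambda)^{-n} \wt g^{\,n} = \id_{\Rb v^+} \oplus \big( (\epsilon\lambda)^{-1} B\big)^n .
\end{align*}
Since $\rho\big((\epsilon\lambda)^{-1}B\big) = \lambda_2(g)/\lambda_1(g) < 1$, we have $\norm{\big((\epsilon\lambda)^{-1}B\big)^n} \to 0$, so $(\epsilon\lambda)^{-n}\wt g^{\,n}$ converges in $\End(\Rb^d)$ to the projection $\pi$ onto $\Rb v^+$ along $W$.

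Finally, since projective classes in $\Pb(\End(\Rb^d))$ are unchanged by the nonzero scalar $(\epsilon\lambda)^{-n}$ and $\pi \neq 0$, this shows $g^n = [\wt g^{\,n}] = \big[(\epsilon\lambda)^{-n}\wt g^{\,n}\big] \to [\pi] =: T_g$ in $\Pb(\End(\Rb^d))$. By construction $\image \pi = \Rb v^+$, which as a point of $\Pb(\Rb^d)$ is $\ell_g^+$, and $\ker \pi = W = H_g^-$, and $\image T_g \oplus \ker T_g = \Rb v^+ \oplus W = \Rb^d$. The last assertion then follows from the observation on limits of linear maps: for $x \in \Pb(\Rb^d) \setminus [H_g^-] = \Pb(\Rb^d) \setminus [\ker T_g]$ we get $g^n x \to T_g(x)$, and for any representative $\wh x$ of $x$ the vector $\pi(\wh x)$ lies in $\Rb v^+ \setminus \{0\}$, so $T_g(x) = [\pi(\wh x)] = [v^+] = \ell_g^+$. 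The only step requiring any care is the first one — that proximality makes the top eigenvalue real and simple, so that the required $\Rb v^+ \oplus W$ splitting exists and $W$ is exactly $H_g^-$; after that the statement is just the standard fact that $\lambda_1(g)^{-n} g^n$ converges to the spectral projection onto the dominant eigenline.
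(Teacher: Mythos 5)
Your argument is correct and complete. The paper states this as an Observation without proof, treating it as the standard fact that $\lambda_1(g)^{-n}g^n$ converges to the spectral projection onto the dominant eigenline; your write-up is exactly the expected justification, and the one point genuinely needing care (that $\lambda_1(g)>\lambda_2(g)$ forces the top eigenvalue to be real and algebraically simple, so the splitting $\Rb v^+\oplus W$ exists and $W=H_g^-$ by the uniqueness built into the definition of $H_g^-$) is handled correctly.
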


We observe the following. 

\begin{observation}\label{obs:fixed pts of proximal elements are extreme} Suppose that $\Omega \subset \Pb(\Rb^d)$ is a properly convex domain. If $g \in \Aut(\Omega)$ is proximal, then $\ell_g^+$ is an extreme point of $\partial \Omega$ and $ \Pb(H^-_g) \cap \partial \Omega = \emptyset$. 
\end{observation} 

\begin{proof} Proposition~\ref{prop:dynamics_of_automorphisms} implies that $\ell_g^+ \in \partial \Omega$ and $ \Pb(H^-_g) \cap \partial \Omega = \emptyset$. Let $F=F_\Omega(\ell^+_g)$ and $V = \Spanset F$. Then $g(V) = V$. Let $\overline{g} \in \GL_d(\Rb)$ be a lift of $g \in \PGL_d(\Rb)$ and let $h \in \GL(V)$ denote the element obtained by restricting $\overline{g}$ to $V$. Notice that $h$ is proximal since $\ell^+_g \subset V$. Further $[h] \in \Aut(F)$ and $h(\ell_g^+) = \ell_g^+$. Since $\Aut(F)$ acts properly on $F$ and $\ell_g^+ \in F$, the cyclic group 
\begin{align*}
[h]^{\Zb} \leq \Aut(F) \leq \PGL(V)
\end{align*}
must be relatively compact. This implies that every eigenvalue of $h$ has the same absolute value. Then, since $h$ is proximal, $V$ must be one-dimensional and so $F=\{\ell_g^+\}$. Thus $\ell_g^+$ is an extreme point. 

\end{proof} 

The following result can be viewed as a converse to Observation~\ref{obs:iterates_of_proximal}  and  will be used to construct proximal elements.

\begin{proposition}\label{prop:crit_for_prox} Suppose that $(g_n)_{n \geq 1}$ is a sequence in $\PGL_d(\Rb)$ and 
\begin{align*}
T := \lim_{n \rightarrow \infty} g_n
\end{align*}
exists in $\Pb(\End(\Rb^d))$. If $\dim(\image T )= 1$ and 
\begin{align*}
\image T  \oplus \ker T = \Rb^d,
\end{align*}
then for $n$ sufficiently large $g_n$ is proximal and
\begin{align*}
\image T  = \lim_{n \rightarrow \infty} \ell_{g_n}^+.
\end{align*}
\end{proposition}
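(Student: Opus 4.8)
The plan is to pick a lift of $T$ to $\End(\Rb^d)$ and corresponding lifts of the $g_n$ that converge to it, and then to exploit the direct sum decomposition $\Rb^d = \image T \oplus \ker T$ to control the eigenvalues of $g_n$. Concretely, since $\dim \image T = 1$ there is a vector $v \in \Rb^d \setminus \{0\}$ with $\image T = \Rb v$, and $\ker T$ is a hyperplane $W$ complementary to $\Rb v$. After normalizing the lifts $\hat g_n \to \hat T$ in $\End(\Rb^d)$, I would write everything in a basis adapted to the splitting $\Rb^d = \Rb v \oplus W$: in this basis $\hat T$ has the block form $\left(\begin{smallmatrix} t & 0 \\ 0 & 0\end{smallmatrix}\right)$ with $t \neq 0$ (the off-diagonal blocks vanish because $\image \hat T \subset \Rb v$ and $\ker \hat T \supset W$), and $\hat g_n$ has block form $\left(\begin{smallmatrix} a_n & b_n \\ c_n & D_n\end{smallmatrix}\right)$ with $a_n \to t \neq 0$, $b_n \to 0$, $c_n \to 0$, and $D_n \to 0$.

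The key step is an eigenvalue estimate: I claim $\hat g_n$ has a simple real eigenvalue $\mu_n$ with $\mu_n \to t$, and all other eigenvalues tend to $0$, so in particular $\abs{\lambda_1(g_n)} > \abs{\lambda_2(g_n)}$ for $n$ large, i.e. $g_n$ is proximal. This follows either from a continuity/perturbation argument or, cleanly, from the following: the characteristic polynomial of $\hat g_n$ converges coefficientwise to that of $\hat T$, which is $x^{d-1}(x - t)$; since $t\neq 0$ is a simple root, for $n$ large exactly one root $\mu_n$ of $\det(xI - \hat g_n)$ is near $t$ and is simple, while the remaining $d-1$ roots cluster near $0$. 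Hence $\lambda_1(g_n) = \abs{\mu_n} \to \abs{t}$ and $\lambda_2(g_n) \to 0$, giving proximality for all large $n$.

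For the convergence of attracting fixed points, let $v_n$ be a unit eigenvector of $\hat g_n$ for $\mu_n$, so that $\ell_{g_n}^+ = [v_n]$. I would show $[v_n] \to [v] = \image T$. One way: pass to a subsequence so that $v_n \to v_\infty$ for some unit vector $v_\infty$; then $\hat g_n v_n = \mu_n v_n$ gives, in the limit, $\hat T v_\infty = t\, v_\infty$ after dividing by the appropriate normalization, so $v_\infty \in \image \hat T \setminus\{0\} = \Rb v$ (using that $\hat T v_\infty = t v_\infty \neq 0$ forces $v_\infty \notin \ker \hat T$ and $\hat T v_\infty \in \image \hat T$). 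Since every subsequential limit of $[v_n]$ equals $[v] = \image T$ and $\Pb(\Rb^d)$ is compact, $\lim_n \ell_{g_n}^+ = \image T$. Alternatively, and perhaps more slickly, one can invoke Observation~\ref{obs:iterates_of_proximal}: for $n$ large, $T_{g_n} = \lim_{k\to\infty} g_n^k$ has image $\ell_{g_n}^+$, and a limiting argument comparing $T_{g_n}$ with $T$ (using $\image T \oplus \ker T = \Rb^d$, so that the convergence $g_n \to T$ is "nondegenerate") shows $\ell_{g_n}^+ \to \image T$; but I expect the direct eigenvector argument to be cleaner to write.

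I expect the main obstacle to be making the eigenvalue separation fully rigorous — specifically, arguing that exactly one eigenvalue of $\hat g_n$ stays bounded away from $0$ (rather than, say, two eigenvalues both converging to values of modulus $\abs{t}$, or a complex-conjugate pair). The coefficientwise convergence of characteristic polynomials handles this cleanly provided one is careful that $t$ is a \emph{simple} root of $x^{d-1}(x-t)$, which holds because $t \neq 0$; a standard argument (e.g. Rouché's theorem applied to $\det(xI - \hat g_n)$ on a small circle around $t$ and on a slightly larger circle around $0$) then isolates the roots. Everything else — the block decomposition, the normalization of lifts, the compactness argument for $\ell_{g_n}^+$ — is routine.
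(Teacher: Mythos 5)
Your proof is correct, but it takes a genuinely different route from the paper's. The paper argues dynamically: it picks a compact ball neighborhood $U$ of $\image T$ disjoint from $[\ker T]$, uses the locally uniform convergence $g_n \to T$ on $\Pb(\Rb^d)\setminus[\ker T]$ to get $g_n(U)\subset U$ for large $n$, applies the Brouwer fixed point theorem to produce a fixed point $x_n\in U$, and then shows $x_n$ is \emph{attractive} because the induced projective maps converge in $C^\infty$ to the constant map (so $\norm{d(f_n)_{x_n}}\to 0$); Observation~\ref{obs:att_fixed_pt} then converts ``attractive fixed point'' into ``proximal with $\ell_{g_n}^+=x_n$''. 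You instead work at the level of matrices: normalize lifts $\hat g_n\to\hat T$, use the block form forced by $\image T\oplus\ker T=\Rb^d$, and invoke continuity of the roots of the characteristic polynomial (with the Rouch\'e/conjugate-pair argument correctly ensuring the isolated root near $t\neq 0$ is simple and real) to separate one large eigenvalue from $d-1$ small ones; the eigenvector compactness argument for $\ell_{g_n}^+\to\image T$ is also sound. Your approach is more elementary and self-contained (no Brouwer, no $C^\infty$ convergence of projective maps, no appeal to the attractive-fixed-point characterization) and yields quantitative spectral information ($\lambda_1(\hat g_n)\to\abs{t}$, $\lambda_2(\hat g_n)\to 0$ for the normalized lifts); the paper's version is coordinate-free and reuses machinery already established for projective dynamics. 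One small caution: your first suggested alternative for the eigenline convergence (via Observation~\ref{obs:iterates_of_proximal}) would be awkward since that observation presupposes proximality and gives no uniformity in $n$; you rightly prefer the direct eigenvector argument. Also note that $\lambda_1(g_n)$ in the paper refers to the $\SL_d^\pm$-normalized lift, so strictly you should phrase the conclusion as $\lambda_1(g_n)/\lambda_2(g_n)\to\infty$; this is cosmetic.
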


\begin{proof} Since $g_n \rightarrow T$ in $\Pb(\End(\Rb^d))$,
\begin{align*}
\lim_{n \rightarrow \infty} g_n(x) = T(x) = \image T \in \Pb(\Rb^d)
\end{align*}
for all $x \in \Pb(\Rb^d) \setminus \Pb(\ker T)$. Moreover, the convergence is uniform on compact subsets of $\Pb(\Rb^d) \setminus \Pb(\ker T)$.

By assumption
\begin{align*}
\image T  \notin \Pb(\ker T),
\end{align*}
so we can find a compact neighborhood $U$ of $\image T$ in $\Pb(\Rb^d)$ such that $U$ is homeomorphic to a closed ball and 
\begin{align*}
U  \cap \Pb(\ker T) = \emptyset.
\end{align*}
Then by passing to a tail, we can assume that $g_n(U) \subset U$ for all $n$. So by the Brouwer fixed-point theorem, each $g_n$ has a fixed point $x_n \in U$. Since $U$ can be chosen arbitrarily small we also have 
\begin{align*}
\image T = \lim_{n \rightarrow \infty} x_n.
\end{align*}

We claim that for $n$ large, $x_n$ is an attractive fixed point of $g_n$. By Lemma~\ref{obs:att_fixed_pt} this will finish the proof. Let $f_n : \Pb(\Rb^d) \rightarrow \Pb(\Rb^d)$ be the diffeomorphism induced by $g_n$, that is $f_n(x) = g_n(x)$ for all $x$. Then, since each $g_n$ acts by projective linear transformations, we see that $f_n$ converges locally uniformly in the $C^\infty$ topology on $\Pb(\Rb^d) \setminus \Pb(\ker T)$ to the constant map $f \equiv \image T$. So fixing a Riemannian metric on $\Pb(\Rb^d)$ we have 
\begin{align*}
\lim_{n \rightarrow \infty} \norm{d(f_n)_{x_n}} = 0.
\end{align*}
Hence for $n$ large $x_n$ is an attractive fixed point of $g_n$.
\end{proof}

\subsection{Rank one isometries}

In this section we state a characterization of rank one isometries established in~\cite{I2019}. 

\begin{theorem}[{M. Islam~\cite[Proposition 6.3]{I2019}}]\label{thm:char_of_rank_one}
Suppose that $\Omega \subset \Pb(\Rb^d)$ is a properly convex domain and $\gamma \in \Aut(\Omega)$. If 
\begin{align*}
\inf_{p \in \Omega} H_\Omega(\gamma (p),p) > 0
\end{align*}
and $\gamma$ fixes two points $x,y \in \partial \Omega$ with $s_{\partial \Omega}(x, y) > 2$, then 
\begin{enumerate}
\item $\gamma$ is bi-proximal and $\{\ell_\gamma^+, \ell_\gamma^-\} = \{x,y\}$. In particular, $\gamma$ is a rank one isometry. 
\item The only points fixed by $\gamma$ in $\partial \Omega$ are $\ell_\gamma^+$ and $\ell_\gamma^-$. 
\item If $w \in \partial \Omega$, then
\begin{align*}
(\ell_\gamma^+,w) \cup (w,\ell_\gamma^-) \subset \Omega.
\end{align*}
\item If $z \in \partial \Omega \setminus \{\ell^\pm_\gamma\}$, then
\begin{align*}
s_{\partial \Omega}(\ell_\gamma^\pm, z) =\infty.
\end{align*}
\end{enumerate}
\end{theorem}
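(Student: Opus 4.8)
The plan is to first extract the key dynamical consequence of the hypothesis $\inf_{p\in\Omega} H_\Omega(p,\gamma p)>0$: namely that $\gamma$ is of infinite order and, along the orbit of a basepoint $p_0$, the sequences $\gamma^n p_0$ and $\gamma^{-n}p_0$ stay uniformly close to the segments $[p_0,x)$ and $[p_0,y)$ respectively. Since $\gamma$ fixes $x$ and $y$, and the Hilbert displacement is bounded below, the orbit $\{\gamma^n p_0\}_{n\in\Zb}$ is a quasigeodesic; I would argue that a subsequential limit of $\gamma^n$ (resp.\ $\gamma^{-n}$) in $\Pb(\End(\Rb^d))$ has image landing in $\Spanset F_\Omega(x)$ (resp.\ $\Spanset F_\Omega(y)$) and kernel meeting $\{y\}$ (resp.\ $\{x\}$) — this is exactly the content of Proposition~\ref{prop:dynamics_of_automorphisms}, once I verify $\gamma^n p_0\to x$ and $\gamma^{-n}p_0\to y$, which follows because the segment $[p_0,\gamma p_0,\gamma^2 p_0,\dots]$ is a quasigeodesic ray with both endpoints fixed by $\gamma$ and the displacement is uniformly bounded above on any orbit (co-compactness is not assumed here, but the displacement along a single orbit is automatically bounded between two positive constants by invariance). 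Then $s_{\partial\Omega}(x,y)>2$ forces the faces $F_\Omega(x)$ and $F_\Omega(y)$ to be "far apart," and in particular $[x,y]\not\subset\partial\Omega$, so the open segment $(x,y)$ lies in $\Omega$; this is the properly embedded line segment on which $\gamma$ will act by translation.

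Next I would prove part (1). Using Proposition~\ref{prop:dynamics_of_automorphisms_2} (with $p_n\in[p_0,x)$ chosen so that $H_\Omega(\gamma^n p_0,p_n)$ is bounded — possible since $\gamma^n p_0\to x$ non-tangentially along the quasigeodesic), any limit $T_+$ of a subsequence of $\gamma^n$ satisfies $\image T_+ = \Spanset F_\Omega(x)$; similarly any limit $T_-$ of $\gamma^{-n}$ has $\image T_- = \Spanset F_\Omega(y)$. The hypothesis $s_{\partial\Omega}(x,y)>2$ should be leveraged to show $F_\Omega(x)=\{x\}$ and $F_\Omega(y)=\{y\}$ (if $F_\Omega(x)$ were positive-dimensional one could relay through an interior point of that face to get $s_{\partial\Omega}(x,y)\le 2$), hence $\dim\image T_+ = 1$. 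Combined with $[\ker T_+]\cap\Omega=\emptyset$ and $y\in[\ker T_+]$ from Proposition~\ref{prop:dynamics_of_automorphisms}, and an argument that $\image T_+\oplus\ker T_+ = \Rb^d$ (the line $x$ is not in the hyperplane $\ker T_+$, since otherwise iterating would pull $\Omega$ into $\partial\Omega$), Proposition~\ref{prop:crit_for_prox} applies to the sequence $g_n=\gamma^n$ and yields that $\gamma^n$ is proximal for large $n$ with attracting fixed line $x$; by Observation~\ref{obs:att_fixed_pt} this means $\gamma$ itself is proximal with $\ell_\gamma^+ = x$ (a power being proximal with a given attracting fixed point forces the same for $\gamma$, since the eigenvalue moduli of $\gamma$ are the $n$-th roots of those of $\gamma^n$). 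The symmetric argument with $\gamma^{-1}$ gives $\ell_\gamma^- = y$, so $\gamma$ is bi-proximal and $\{\ell_\gamma^+,\ell_\gamma^-\}=\{x,y\}$; since $s_{\partial\Omega}(\ell_\gamma^+,\ell_\gamma^-) = s_{\partial\Omega}(x,y)>2$, $\gamma$ is by definition a rank one isometry.

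For parts (2) and (3), I would work in the decomposition $\Rb^d = \ell_\gamma^+\oplus H_\gamma^- $ and simultaneously $\Rb^d = \ell_\gamma^-\oplus H_\gamma^+$. Given $w\in\partial\Omega$ with, say, $w\ne\ell_\gamma^+$, the point $w$ is not contained in the repelling hyperplane relationship in a way that obstructs convergence; applying $\gamma^n$ we get $\gamma^n w\to\ell_\gamma^+$, and then convexity of $\overline\Omega$ together with the fact that $(\ell_\gamma^+,\ell_\gamma^-)\subset\Omega$ should force the open segment $(\ell_\gamma^+,w)$ into $\Omega$: if a point of $(\ell_\gamma^+,w)$ lay on $\partial\Omega$, the whole segment would (by the face structure, Observation~\ref{obs:faces}(5)), and pushing forward by $\gamma^n$ would exhibit a segment in $\partial\Omega$ from $\ell_\gamma^+$ toward the image of $w$, contradicting $s_{\partial\Omega}(\ell_\gamma^+,\ell_\gamma^-)>2$ after one more relay step — this is where I expect the main obstacle to lie, in bookkeeping the simplicial distance bounds carefully to rule out the relevant configurations. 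Part (3) is then a refinement: if $z\ne\ell_\gamma^\pm$ and $s_{\partial\Omega}(\ell_\gamma^+,z)<\infty$, a finite chain of boundary segments from $\ell_\gamma^+$ to $z$ combined with part (2) applied at $z$ (giving $(z,\ell_\gamma^-)\subset\Omega$) and an application of $\gamma^n$ (which fixes $\ell_\gamma^+$ and sends everything else toward $\ell_\gamma^+$) would collapse this chain to length $\le 2$ between $\ell_\gamma^+$ and $\ell_\gamma^-$, contradicting the rank one hypothesis; making the "collapse under $\gamma^n$" step precise, i.e.\ showing the limiting chain still consists of genuine boundary segments, is the technical heart and I would handle it via the uniform convergence of $\gamma^n$ on compact subsets of $\Pb(\Rb^d)\setminus[H_\gamma^-]$ from Observation~\ref{obs:iterates_of_proximal}.
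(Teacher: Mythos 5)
This statement is not proved in the paper: it is quoted from M.~Islam~\cite[Proposition 6.3]{I2019}, and the only internal commentary is the remark that (3) follows from (2). So your proposal must stand on its own, and it has genuine gaps. For part (1) your architecture is workable: $s_{\partial\Omega}(x,y)>2$ rules out $[x,y]\subset\partial\Omega$, so $(x,y)\subset\Omega$ is a $\gamma$-invariant axis, and Proposition~\ref{prop:dynamics_of_automorphisms_2} applied with $p_n=\gamma^n p_0\in[p_0,x)$ identifies $\image T_+$ with $\Spanset F_\Omega(x)$. But the step you dispose of in a parenthesis, $F_\Omega(x)=\{x\}$, is justified incorrectly, and it is one of the substantive points of the proof. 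If $x'$ lies in the \emph{open} face $F_\Omega(x)$, then Observation~\ref{obs:faces}(5) applied to $z\in(x,y)$ gives $(x',y)\subset F_\Omega(z)=\Omega$, so the second link of your relay $x\to x'\to y$ is never a boundary segment: the relay cannot close. A correct argument must take $x'\in\partial F_\Omega(x)$ (nonempty precisely when the face is positive-dimensional) and use the dynamics: for $q\in(x',y)$ one has $\gamma^n q\to T_+q\in F_\Omega(x)$ (the paper's proof of Proposition~\ref{prop:dynamics_of_automorphisms_2} shows $T_+(\Omega)=F_\Omega(x)$), while $\gamma^n x'$ subconverges to some $x''$ in the compact invariant set $\partial F_\Omega(x)$ and $\gamma^n y=y\notin\overline{F_\Omega(x)}$; hence the limit segment $[x'',y]$ carries the point $T_+q\in\partial\Omega$ in its relative interior, forcing $[x'',y]\subset\partial\Omega$ and $s_{\partial\Omega}(x,y)\le 2$, a contradiction. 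Likewise your reason for $\image T_+\oplus\ker T_+=\Rb^d$ is not an argument; the quick correct one is that $y\in[\ker T_+]$ by Proposition~\ref{prop:dynamics_of_automorphisms}, so $x\in[\ker T_+]$ would force $(x,y)\subset[\ker T_+]$, which is disjoint from $\Omega$.

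The larger gap is part (2), which you yourself flag as the main obstacle and leave open, and your sketch fails exactly in the hard case. If $[\ell_\gamma^+,w]\subset\partial\Omega$ with $w\notin[H_\gamma^+]$, then $\gamma^{-n}w\to\ell_\gamma^-$ and the segments $\gamma^{-n}[\ell_\gamma^+,w]=[\ell_\gamma^+,\gamma^{-n}w]\subset\partial\Omega$ converge to $[\ell_\gamma^+,\ell_\gamma^-]\subset\partial\Omega$, contradicting $(\ell_\gamma^+,\ell_\gamma^-)\subset\Omega$; that case is easy. But $\ell_\gamma^+\in[H_\gamma^+]$, so it can perfectly well happen that $w\in[H_\gamma^+]$ and the whole segment $[\ell_\gamma^+,w]$ sits inside the invariant hyperplane section $[H_\gamma^+]\cap\partial\Omega$; there Observation~\ref{obs:iterates_of_proximal} gives you nothing, $\gamma^{n}w$ collapses the segment to the point $\ell_\gamma^+$ and $\gamma^{-n}w$ merely wanders in $[H_\gamma^+]\cap\partial\Omega$ with no identified limit, so your ``push forward and relay'' does not produce the length-two chain from $\ell_\gamma^+$ to $\ell_\gamma^-$ needed for a contradiction. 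This case requires a separate idea (in~\cite{I2019} it is handled with the half-triangle formalism), and it is the technical heart of the proposition. Finally, part (3) needs none of the chain-collapsing machinery you propose: if $s_{\partial\Omega}(\ell_\gamma^+,z)<\infty$ with $z\ne\ell_\gamma^+$, the first nontrivial link of a chain is a segment $[\ell_\gamma^+,a]\subset\partial\Omega$ with $a\ne\ell_\gamma^+$, which part (2) forbids outright --- exactly the paper's remark that (3) is a consequence of (2).
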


\begin{remark} Notice that (4) is a consequence of (3). \end{remark}

\section{A semigroup associated to a boundary face}\label{sec:semigroups_associated_to_face}

In this section we prove Theorem~\ref{thm:structure_subsets_intro}, which we restate here. 

\begin{theorem}\label{thm:structure_subsets} Suppose that $\Omega \subset \Pb(\Rb^d)$ is an irreducible properly convex domain and $\Gamma \leq \Aut(\Omega)$ is a discrete group acting co-compactly on $\Omega$. If $\Omega$ is non-symmetric, $F \subset \partial\Omega$ is a boundary face, $V:=\Spanset F$, and $\dim(V) \geq 2$, then: 
\begin{enumerate}[(a)]
\item If $T \in \overline{\Gamma}^{\End}_F$, then $T(\Omega) \subset \overline{F}$.
\item If $T \in \overline{\Gamma}^{\End}_{F,\star}$, then $T(F)$ is an open subset of $F$. 
\item The set 
\begin{align*}
\left\{ T|_V : T \in \overline{\Gamma}^{\End}_{F,\star}\right\}
\end{align*}
is a non-discrete Zariski dense semigroup in $\Pb(\End(V))$.
\end{enumerate}
\end{theorem}

The proof of Theorem~\ref{thm:structure_subsets} will follow from a series of lemmas, many of which hold in greater generality. 

For the rest of the section fix a properly convex domain $\Omega \subset \Pb(\Rb^d)$ and a subgroup $\Gamma \leq \Aut(\Omega)$. Notice that we are not (currently) assuming that $\Omega$ is irreducible, that $\Gamma$ is discrete, or that $\Gamma$ acts co-compactly on $\Omega$. 

We begin with an observation. 

\begin{observation}\label{obs:structure_subsets} \
\begin{enumerate}[(a)]
\item If $T \in \overline{\Gamma}^{\End}$, then $\Pb(\ker T) \cap \Omega = \emptyset$. 
\item If $S,T \in \overline{\Gamma}^{\End}$ and $\image T \setminus \ker S \neq \emptyset$, then $S \circ T \in \overline{\Gamma}^{\End}$. 
\end{enumerate}
\end{observation}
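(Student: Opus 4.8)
The plan is to prove the two statements directly from the definition of $\overline{\Lambda}^{\End}$ as a closure together with the Observation on limits of linear maps.

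For part (a): suppose $T \in \overline{\Lambda}^{\End}$, so there is a sequence $g_n \in \Lambda$ with $g_n \to T$ in $\Pb(\End(\Rb^d))$. Aiming for a contradiction, suppose $x \in [\ker T] \cap \Omega$. Fix a basepoint $p_0 \in \Omega$ and note that by compactness of $\overline{\Omega}$ we may pass to a subsequence so that $g_n^{-1}(p_0) \to y \in \overline{\Omega}$. Now consider the sequence $g_n(x)$. On the one hand, since $g_n \in \Aut(\Omega)$ we have $g_n(x) \in \Omega$; on the other hand I want to argue that $x \in [\ker T]$ forces the orbit $g_n(x)$ to escape every compact subset of $\Omega$, i.e. to limit into $\partial \Omega$, which is the contradiction. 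The cleanest way to get this is to invoke Proposition~\ref{prop:dynamics_of_automorphisms}: after passing to a further subsequence so that $g_n(p_0) \to x' \in \overline{\Omega}$, that proposition gives $[\ker T] \cap \Omega = \emptyset$ directly, provided $x' \in \partial\Omega$. If instead $x' \in \Omega$, then $g_n$ stays in a compact subset of $\Aut(\Omega)$ (by properness of the action and discreteness of $\Lambda$), so a subsequence of $g_n$ converges in $\PGL_d(\Rb)$ to some $h$, whence $T = [h]$ is invertible and $[\ker T] = \emptyset$. Either way $[\ker T]\cap \Omega = \emptyset$. (One should double-check that Proposition~\ref{prop:dynamics_of_automorphisms} as stated does not already presuppose $x \in \partial\Omega$; if it does, the dichotomy above handles the remaining case.)

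For part (b): suppose $S, T \in \overline{\Lambda}^{\End}$ with $\image(T) \not\subset \ker(S)$, equivalently $\image(T) \setminus \ker(S) \neq \emptyset$. Choose sequences $g_n \to T$ and $h_n \to S$ in $\Pb(\End(\Rb^d))$, with chosen lifts $\widetilde g_n, \widetilde h_n \in \End(\Rb^d)$ normalized (say in operator norm) so that $\widetilde g_n \to \widetilde T$ and $\widetilde h_n \to \widetilde S$ for fixed lifts $\widetilde T, \widetilde S$ of $T, S$. The hypothesis $\image(\widetilde T) \not\subset \ker(\widetilde S)$ means $\widetilde S \widetilde T \neq 0$, so $\widetilde S \widetilde T$ is a nonzero endomorphism and $[\widetilde S \widetilde T] = S \circ T \in \Pb(\End(\Rb^d))$ is well-defined. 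Since matrix multiplication is continuous, $\widetilde h_n \widetilde g_n \to \widetilde S \widetilde T$; because the limit is nonzero, for $n$ large $\widetilde h_n \widetilde g_n \neq 0$, and renormalizing we get $[h_n g_n] \to [\widetilde S \widetilde T] = S\circ T$ in $\Pb(\End(\Rb^d))$. Finally $h_n g_n \in \Lambda$ since $\Lambda$ is a group, so $S \circ T \in \overline{\Lambda}^{\End}$.

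The only subtle point is part (a): one must make sure the argument producing a point of $\Omega$ in the kernel leads to a genuine contradiction, and this is where the properness of the $\Lambda$-action on $\Omega$ (used to split into the ``$g_n$ bounded'' versus ``$g_n$ escaping'' cases) does the real work. Part (b) is essentially just continuity of composition in $\End(\Rb^d)$ combined with the observation that the hypothesis is exactly what is needed to keep the product away from $0$; I expect it to be routine.
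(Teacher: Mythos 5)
Your proposal is correct and follows essentially the same route as the paper: part (a) is deduced from Proposition~\ref{prop:dynamics_of_automorphisms}, and part (b) is continuity of composition once the hypothesis guarantees $S\circ T\neq 0$. Your dichotomy in (a) (handling the case where $g_n(p_0)$ stays in a compact subset of $\Omega$, so that $T$ is invertible) is a legitimate point that the paper's one-line proof glosses over, and your worry about discreteness is unnecessary since properness of the $\Aut(\Omega)$-action on $\Omega$ suffices.
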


\begin{proof}
Part (a) follows immediately from Proposition~\ref{prop:dynamics_of_automorphisms}.

For part (b), fix $S,T \in \overline{\Gamma}^{\End}$ with $\image T \setminus \ker S \neq \emptyset$. By hypothesis $S \circ T$ is a well defined element of $\Pb(\End(\Rb^d))$. To show that $S \circ T \in \overline{\Gamma}^{\End}$, fix sequences $(g_n)_{n \geq 1}$, $(h_n)_{n \geq 1}$ in $\Gamma$ such that 
\begin{align*}
S = \lim_{n \rightarrow \infty} g_n \quad \text{and} \quad T = \lim_{n \rightarrow \infty} h_n
\end{align*}
in $\Pb(\End(\Rb^d))$. Then, since $S \circ T \neq 0$, we have 
\begin{align*}
S \circ T = \lim_{n \rightarrow \infty} g_nh_n
\end{align*}
in $\Pb(\End(\Rb^d))$. So $S \circ T \in \overline{\Gamma}^{\End}$.

\end{proof}

\begin{lemma}\label{lem:semi_gp_basic_prop_1} If $F \subset \partial\Omega$ a boundary face and $T \in \overline{\Gamma}^{\End}_F$, then $T(\Omega) \subset \overline{F}$.
\end{lemma}

\begin{proof}  Suppose $T \in \overline{\Gamma}^{\End}_F$. Then there exists a sequence $(g_n)_{n \geq 1}$ in  $\Gamma$ such that 
\begin{align*}
T = \lim_{n \rightarrow \infty} g_n 
\end{align*}
in $\Pb(\End(\Rb^d))$. Since $\Pb(\ker T) \cap \Omega = \emptyset$ we have
\begin{align*}
T(p) = \lim_{n \rightarrow \infty} g_n(p) \in \overline{\Omega}
\end{align*}
for all $p \in \Omega$. So $T(\Omega) \subset \overline{\Omega}$. Since ${\rm image}(T) \subset V$ we then have
\begin{equation*}
T(\Omega) \subset \Pb(V) \cap \overline{\Omega} = \overline{F}. \qedhere
\end{equation*}
\end{proof}

\begin{lemma}\label{lem:semi_gp_basic_prop_2} If $F \subset \partial\Omega$ a boundary face and $T \in \overline{\Gamma}^{\End}_{F,\star}$, then $T(F)$ is an open subset of $F$. 
\end{lemma}

\begin{proof} By definition and Observation~\ref{obs:structure_subsets}
\begin{align*}
(\Omega \cup F) \cap \Pb(\ker T) \subset (\Omega \cup \Pb(V)) \cap \Pb(\ker T) = \emptyset.
\end{align*}
So $T$ induces a continuous map on $ \Omega \cup F$. Since $F \subset \overline{\Omega}$, the previous lemma implies that
\begin{align*}
T(F) \subset \overline{T(\Omega)} \subset \overline{F}.
\end{align*}
Since $V \cap \ker T = \{0\}$, $T(F)$ is an open subset of $\Pb(V)$. So 
\begin{equation*}
T(F) \subset \relint\left(\overline{F}\right) = F. \qedhere
\end{equation*}
\end{proof}

\begin{lemma} If $F \subset \partial\Omega$ a boundary face, then the set 
\begin{align*}
\left\{ T|_V : T \in \overline{\Gamma}^{\End}_{F,\star}\right\}
\end{align*}
is a semigroup in $\Pb(\End(V))$.
\end{lemma}

\begin{proof}Fix $T_1, T_2 \in \overline{\Gamma}^{\End}_{F,\star}$. Then
\begin{align*}
\image T_2 \setminus \ker T_1 = V \setminus \ker T_1 = V \setminus \{0\} \neq \emptyset
\end{align*}
and so $T_1 \circ T_2 \in \overline{\Gamma}^{\End}$ by Observation~\ref{obs:structure_subsets}. 

We first show that $\ker (T_1 \circ T_2) \cap V =\{0\}$. Suppose $v \in \ker (T_1 \circ T_2) \cap V$. Then $T_2(v) \in \ker T_1$. But $\image T_2 = V$ and $\ker T_1 \cap V =\{0\}$, so $T_2(v) = 0$. So $v \in \ker T_2 \cap V=\{0\}$. So $v=0$. Thus 
\begin{align}
\label{eq:ker_transverse}
\{0\} =  \ker (T_1 \circ T_2) \cap V.
\end{align}
Next, by definition,
\begin{align*}
\image(T_1 \circ T_2) \subset \image T_1 = V.
\end{align*}
 So by Equation~\eqref{eq:ker_transverse} and dimension counting we have
\begin{align*}
\image(T_1 \circ T_2) = V.
\end{align*}
Thus $T_1 \circ T_2 \in  \overline{\Gamma}^{\End}_{F,\star}$. 

Since $\image T_2 = V$ we also have 
\begin{align*}
T_1|_V \circ T_2|_V = (T_1 \circ T_2)|_V.
\end{align*}
So
\begin{align*}
(T_1 \circ T_2)|_V \in \left\{ T|_V : T \in \overline{\Gamma}^{\End}_{F,\star}\right\}.
\end{align*}

Then, since $T_1, T_2 \in \overline{\Gamma}^{\End}_{F,\star}$ were arbitrary, we see that 
\begin{align*}
\left\{ T|_V : T \in \overline{\Gamma}^{\End}_{F,\star}\right\}
\end{align*}
is a semigroup in $\Pb(\End(V))$.
\end{proof}

The next lemma requires a definition. 

\begin{definition} A point $x \in \partial \Omega$ is a \emph{conical limit point of $\Gamma$} if there exist $p_0 \in \Omega$, a sequence $(p_n)_{n \geq 1}$ in $[p_0,x)$ with $p_n \rightarrow x$, and a sequence $(\gamma_n)_{n \geq 1}$ in $\Gamma$ with 
\begin{align*}
\sup_{n \geq 1} H_\Omega(\gamma_{n}(p_0), p_n) <+\infty.
\end{align*}
\end{definition} 

Notice that if $\Gamma$ acts co-compactly on $\Omega$, then every boundary point is a conical limit point. 

\begin{lemma}\label{lem:techincal_semigroup_existence} Suppose that $x \in \partial \Omega$ is a conical limit point of $\Gamma$, $F = F_\Omega(x)$, $V = \Spanset F$, and $\dim(V) = k$. If $k \geq 2$ and the image of $\Gamma \hookrightarrow \PGL(\wedge^k \Rb^d)$ is strongly irreducible (e.g. $\Gamma$ is Zariski dense in $\PGL_d(\Rb)$), then there exists a sequence $(g_n)_{n \geq 1}$ in   $\Gamma$ with the following properties:
\begin{enumerate}
\item $ g_n \rightarrow T$  in $\Pb(\End(\Rb^d))$ where $T \in \overline{\Gamma}^{\End}_{F,\star}$.  
\item $g_1|_V, g_2|_V, \dots$ are pairwise distinct elements of $\Pb({\rm Lin}(V,\Rb^d))$. 
\end{enumerate}
\end{lemma}

\begin{proof} By hypothesis, there exist $p_0 \in \Omega$,  a sequence $(p_n)_{n \geq 1}$ in  $[p_0,x)$ with $p_n \rightarrow x$, and a sequence $(\gamma_n)_{n \geq 1}$ in $\Gamma$ with 
\begin{align*}
\sup_{n \geq 1} H_\Omega(\gamma_{n}(p_0), p_n) <+\infty.
\end{align*}
After passing to a subsequence we can suppose that the limit
\begin{align*}
S = \lim_{n \rightarrow \infty} \gamma_{n}
\end{align*}
exists in $\Pb(\End(\Rb^d))$. Then, by Proposition~\ref{prop:dynamics_of_automorphisms_2},
\begin{align*}
\image S = \Spanset F = V
\end{align*}
and so $S \in \overline{\Gamma}^{\End}_F$. By passing to another subsequence we can suppose that 
 \begin{align*}
 V_\infty = \lim_{n \rightarrow \infty} \gamma_n^{-1} V
 \end{align*}
 exists in $\Gr_{k}(\Rb^d)$. 
 
 Suppose $V= \Spanset\{ v_1,\dots, v_k\}$, $V_\infty = \Spanset\{ u_1,\dots, u_k\}$, and $\ker S = \Spanset\{ s_1,\dots, s_{d-k}\}$. Let $W_1 = [u_1 \wedge \dots \wedge u_k]$ and 
 $$
 W_2 = \left\{ \alpha \in \wedge^k \Rb^d : \alpha \wedge s_1 \wedge \dots \wedge s_{d-k} = 0\right\}.
 $$
 Since the image of $\Gamma \hookrightarrow \PGL(\wedge^k \Rb^d)$ is strongly irreducible, Observation~\ref{obs:str_irred_transverse} implies that there exists $\phi \in \Gamma$ such that $\phi[ v_1 \wedge \dots \wedge v_k] \notin W_1 \cup W_2$. Equivalently, $\ker S \cap \phi V= \{0\}$ and $\phi V \neq V_\infty$. 
 
Then define $g_n:=\gamma_n \phi$. Then 
 \begin{align*}
T:=S \circ \phi= \lim_{n \rightarrow \infty}  g_n
\end{align*}
 exists in $\Pb(\End(\Rb^d))$. Further, $\image T = \image S=V$ and 
 \begin{align*}
 \ker T \cap V = \phi^{-1} \left( \ker S \cap \phi V \right) = \{ 0\}.
 \end{align*}
 So $T \in \overline{\Gamma}^{\End}_{F,\star}$. Also, since $T(V) = V$, we have
 \begin{align*}
 V = T(V) = \lim_{n \rightarrow \infty} g_n V.
 \end{align*}
 
 Next we claim that $g_n V \neq V$ for $n$ sufficiently large. Notice that $g_n V = V$ if and only if $g_n^{-1}V = V$ if and only if $\gamma_n^{-1} V = \phi V$. But $\gamma_n^{-1} V \rightarrow V_\infty$ and $\phi V \neq V_\infty$. So $g_n V \neq V$ for $n$ sufficiently large. 
 
Finally, since $g_n V \rightarrow V$ and $g_n V \neq V$  for $n$ sufficiently large, we can pass to a subsequence so that $V, g_1V, g_2V, \dots$ are pairwise distinct subspaces. Thus $g_1|_V, g_2|_V, \dots$ must be pairwise distinct.
\end{proof}

\begin{lemma}\label{lem:face_semi_gp_non_discrete} Suppose that $x \in \partial \Omega$ is a conical limit point of $\Gamma$, $F = F_\Omega(x)$, $V = \Spanset F$, and $\dim(V) = k$. If $k \geq 2$ and the image of $\Gamma \hookrightarrow \PGL(\wedge^k \Rb^d)$ is strongly irreducible (e.g. $\Gamma$ is Zariski dense in $\PGL_d(\Rb)$), then the set
\begin{align*}
\left\{ T|_V : T \in \overline{\Gamma}^{\End}_{F,\star}\right\}
\end{align*}
is non-discrete in $\Pb(\End(V))$.
\end{lemma}

\begin{proof}  Let $T \in \overline{\Gamma}^{\End}_{F,\star}$ and $(g_n)_{n \geq 1}$ be as in the previous lemma. Since $g_1|_V, g_2|_V, \dots$ are pairwise distinct and each $g_n|_V$ is determined by its values on any set of $\dim V+1$ points in general position, after passing to a subsequence we can find a point $x_0 \in F$ such that $g_1(x_0), g_2(x_0), \dots$ are pairwise distinct. 

Since $x_0 \in F$ and $\Pb(\ker T) \cap F = \emptyset$, we have
\begin{align*}
T(x_0) = \lim_{n \rightarrow \infty} g_n(x_0).
\end{align*}
Since $g_1(x_0), g_2(x_0), \dots$ are pairwise distinct, by passing to another sequence we can assume that $g_n(x_0) \neq T(x_0)$ for all $n$. Then, for each $n$ there exists a unique projective line $L_n$ containing $T(x_0)$ and $g_n(x_0)$. By passing to a subsequence we can suppose that $L_n$ converges to a projective line $L$. Then let $W \subset \Rb^d$ be the two dimensional linear subspace with $L=\Pb(W)$.

Fix some $W^\prime \in \Gr_k(\Rb^d)$ with $W \subset W^\prime$. Suppose $V= \Spanset\{ v_1,\dots, v_k\}$, $W^\prime = \Spanset\{ w_1,\dots, w_k\}$, and $\ker T = \Spanset\{ t_1,\dots, t_{d-k}\}$. Let 
 $$
 U = \left\{ \alpha \in \wedge^k \Rb^d : \alpha \wedge t_1 \wedge \dots \wedge t_{d-k} = 0\right\}.
 $$
 Since the image of $\Gamma \hookrightarrow \PGL(\wedge^k \Rb^d)$ is strongly irreducible, Observation~\ref{obs:str_irred_transverse} implies that there exists $\varphi \in \Gamma$ such that $\varphi[ v_1 \wedge \dots \wedge v_k] \notin U$ and $\varphi[w_1 \wedge \dots \wedge w_k] \notin U$. Hence $\ker T \cap \varphi V = \{0\}$ and $\ker T \cap \varphi W = \{0\}$.

Notice that $T \varphi T = \lim_{n \rightarrow \infty} g_n \varphi g_n$ is in $\overline{\Gamma}^{\End}_{F,\star}$. Then replacing $(g_n)_{n \geq 1}$ with a tail, we can assume that 
\begin{align*}
S_n : = T \varphi g_n \in \overline{\Gamma}^{\End}_{F, \star}
\end{align*}
for all $n$.

We claim that the set 
\begin{align*}
\{ S_n(x_0) : n \geq 0\} \subset F
\end{align*}
is infinite. For this calculation we fix an affine chart $\Ab$ of $\Pb(\Rb^d)$ which contains $\overline{\Omega}$. We then identify $\Ab$ with $\Rb^{d-1}$ such that  $T(x_0)=0$ and 
\begin{align*}
\Ab \cap L= \{ (t, 0,\dots,0) : t \in \Rb\}.
\end{align*}
Since $\ker T \cap \varphi V = \{0\}$, in these coordinates the map $T\varphi$ is smooth in a neighborhood of $0=T(x_0)$. Further, since $\ker T \cap \varphi W = \{0\}$, in these coordinates 
\begin{align*}
d(T\varphi)_0(1,0,\dots,0) \neq 0.
\end{align*}
Now, since $L_n \rightarrow L$ and $g_n(x_0) \rightarrow T(x_0)$, in these coordinates 
\begin{align*}
g_n(x_0) = (t_n,0,\dots,0) + {\rm o}(\abs{t_n}).
\end{align*}
for some sequence $(t_n)_{n \geq 1}$ converging to 0. Then in these coordinates 
\begin{align*}
S_n(x_0) & = T\varphi g_n(x_0) = T\varphi\Big( ( t_n,0,\dots,0)+ {\rm o}(\abs{t_n}) \Big) \\
& = T\varphi T(x_0) + t_nd(T\varphi)_0(1,0,\dots,0) +  {\rm o}(\abs{t_n}).
\end{align*}
Since $d(T\varphi)_0(1,0,\dots,0) \neq 0$ and $t_n \rightarrow 0$, we see that the set $\{ S_n(x_0) : n \geq 0\}$ is infinite.

Finally, since $S_n|_V \rightarrow T \varphi T|_V$, this implies that 
\begin{align*}
\left\{ S_n|_V : n \geq 0 \right\} \cup \left\{ T \varphi T|_V \right\}
\end{align*}
is non-discrete in $\Pb(\End(V))$. 
\end{proof}

\begin{lemma}\label{lem:semi_gp_basic_prop_3} Suppose that $x \in \partial \Omega$ is a conical limit point of $\Gamma$, $F = F_\Omega(x)$, $V = \Spanset F$, and $\dim(V) = k$. If $k \geq 2$ and $\Gamma$ is Zariski dense in $\PGL_d(\Rb)$, then
\begin{align*}
\left\{ T|_V : T \in \overline{\Gamma}^{\End}_{F,\star}\right\}
\end{align*}
is Zariski dense in $\Pb(\End(V))$.
\end{lemma}

\begin{proof}
Let $Z_0$ be the Zariski closure of 
\begin{align*}
\left\{ T|_V : T \in \overline{\Gamma}^{\End}_{F,\star}\right\}
\end{align*}
in $\Pb(\End(V))$. 

Lemma~\ref{lem:techincal_semigroup_existence} implies that $\overline{\Gamma}^{\End}_{F,\star}$ is non-empty. So fix $T \in \overline{\Gamma}^{\End}_{F,\star}$. Then define
\begin{align*}
Z_1 = \{ g \in \PGL_d(\Rb) : {\rm rank} (T \circ g|_V) < \dim(V) \}.
\end{align*}
Notice that $Z_1$ is a proper Zariski closed set in $\PGL_d(\Rb)$ since ${\rm rank}(T) = \dim(V)$. Also define
\begin{align*}
Z_2 = \{ g \in \PGL_d(\Rb) : T \circ g|_V \in Z_0\}.
\end{align*}
Notice that $Z_2$ is a Zariski closed subset of $\PGL_d(\Rb)$. 

We claim that $\Gamma \subset Z_1 \cup Z_2$. If $g \in \Gamma \setminus Z_1$, then ${\rm rank}(T \circ g|_V) = \dim V$ and 
$$
\image(T \circ g|_V) \subset \image T = V.
$$
So $(T \circ g)(V) = V$ which implies that $T \circ g \in   \overline{\Gamma}^{\End}_{F,\star}$ and hence that $g \in Z_2$. So $\Gamma \subset Z_1 \cup Z_2$.

Then, since $Z_1$ is a proper Zariski closed subset of $\PGL_d(\Rb)$ and $\Gamma$ is Zariski dense in $\PGL_d(\Rb)$, we see that $Z_2 = \PGL_d(\Rb)$. Then 
$$
Z_0 \supset \{ T \circ g|_V  : g \in Z_2\} =  \{ T \circ g|_V  : g \in \PGL_d(\Rb) \} \supset \PGL(V)
$$
since $\image T = V$. Thus  $Z_0 = \Pb(\End(V))$. 
\end{proof}

\begin{proof}[Proof of Theorem~\ref{thm:structure_subsets}] Parts (1) and (2) follow from Lemmas~\ref{lem:semi_gp_basic_prop_1} and~\ref{lem:semi_gp_basic_prop_2} respectively. Since $\Gamma$ acts co-compactly on $\Omega$ every point in $\partial \Omega$ is a conical limit point and Theorem~\ref{thm:Zariski_closure}  implies that $\Gamma$ is Zariski dense in $\PGL_d(\Rb)$. So Part (3) follows from Lemmas~\ref{lem:semi_gp_basic_prop_1}, ~\ref{lem:face_semi_gp_non_discrete}, and~\ref{lem:semi_gp_basic_prop_3}.

\end{proof} 

\section{The main rigidity theorem}\label{sec:main_rigidity_result}

Recall that $\Ec_\Omega \subset \partial \Omega$ denotes the set of extreme points of a properly convex domain $\Omega$. In this section we prove the following rigidity result. 

\begin{theorem}\label{thm:main_rigidity_result} Suppose that $\Omega \subset \Pb(\Rb^d)$ is an irreducible properly convex divisible domain and there exists a boundary face $F \subset \partial \Omega$ such that 
\begin{align*}
F \cap \overline{\Ec_\Omega} = \emptyset.
\end{align*}
Then $\Omega$ is symmetric with real rank at least two.
\end{theorem}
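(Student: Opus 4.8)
The plan is to exploit Theorem~\ref{thm:structure_subsets} together with the hypothesis $F \cap \overline{\Ec_\Omega} = \emptyset$ exactly as sketched in the introduction. Suppose for contradiction that $\Omega$ is non-symmetric; by Theorem~\ref{thm:Zariski_closure} the group $\Lambda$ is Zariski dense in $\PGL_d(\Rb)$. First I would reduce to the case where $F$ is chosen to be of \emph{minimal} dimension among boundary faces disjoint from $\overline{\Ec_\Omega}$; then every boundary face of $\overline F = [V] \cap \overline\Omega$ of strictly smaller dimension meets $\overline{\Ec_\Omega}$, and in fact (using Observation~\ref{obs:faces}(4), faces of $\overline F$ are faces of $\overline\Omega$) one gets that $\overline{\Ec_\Omega} \cap \partial F$ contains every extreme point of the properly convex domain $F \subset [V]$. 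Since extreme points of $F$ are extreme points of $\Omega$, this says $\Ec_F \subset \overline{\Ec_\Omega}$, and by minimality $\Ec_F$ is exactly the set of those extreme points of $\Omega$ lying in $\overline F$.

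The key step is to show $T|_V \in \Aut(F)$ for every $T \in \overline{\Lambda}^{\End}_{F,\star}$. By Theorem~\ref{thm:structure_subsets}(b), $T(F)$ is an open subset of $F$, so $T|_V$ is an open map $[V] \to [V]$ that carries $F$ into $F$; I need it to be onto $F$. The idea is that $T|_V$, being a non-degenerate projective linear map of $[V]$ taking the properly convex domain $F$ into itself, must permute the extreme points of its image; but the extreme points of $\overline\Omega$ that lie in $\overline F$ form the closed set $\overline{\Ec_\Omega} \cap \overline F$, which $T$ preserves (since $T \in \overline\Lambda^{\End}$ is a limit of automorphisms, and $\overline{\Ec_\Omega}$ is $\Lambda$-invariant, one checks the limit map sends $\overline{\Ec_\Omega}\cap\overline F$ into $\overline{\Ec_\Omega}$, hence into $\overline F$, so into $\overline{\Ec_\Omega}\cap\overline F = \overline{\Ec_F}$). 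A projective map of $[V]$ that injectively sends a properly convex domain $F$ into itself and preserves the closure of its extreme point set must be surjective onto $F$ — otherwise $T(\overline F) \subsetneq \overline F$ is a proper compact convex subset whose extreme points are a proper subset of $\overline{\Ec_F}$, contradicting that $T$ maps $\overline{\Ec_F}$ onto itself (injectivity plus a compactness/cardinality or a "no proper self-embedding preserving the full extreme set" argument; one can also iterate $T$ and use that $\bigcap_n T^n(\overline F)$ is a nonempty $T$-invariant compact convex set and compare extreme points). Thus $T|_V$ is a bijection of $F$, i.e.\ $T|_V \in \Aut(F)$.

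Granting this, the set $\{T|_V : T \in \overline{\Lambda}^{\End}_{F,\star}\}$ is simultaneously (i) a subset of the group $\Aut(F) \le \PGL(V)$ and (ii) a non-discrete Zariski dense semigroup in $\Pb(\End(V))$ by Theorem~\ref{thm:structure_subsets}(c). Since it is Zariski dense in $\Pb(\End(V))$ and contained in $\PGL(V)$, its Zariski closure is all of $\PGL(V)$; being a semigroup whose Zariski closure is a group, it is Zariski dense in $\PGL(V)$, and hence $\Aut(F)$ is Zariski dense in $\PGL(V)$. Moreover $\Aut(F)$ is a closed subgroup of $\PGL(V)$ (it is the stabilizer of the convex body $F$, hence closed), and a Zariski dense closed subgroup of $\PGL(V)$ is either all of $\PGL(V)$ or has the same identity component; in either case $\Aut(F)$ acts transitively on $F$ with large stabilizers, forcing $F$ to be a homogeneous properly convex domain — but $\Aut(F)$ Zariski dense in $\PGL(V)$ means $F$ is preserved by $\PGL(V)^\circ$, which is impossible since $\PGL(V)$ does not preserve any properly convex domain in $[V]$ (it does not even preserve an affine chart). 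This contradiction shows $\Omega$ must be symmetric; and since a symmetric $\Omega$ admitting a properly embedded simplex through the face $F$ (or just: $\Omega$ divisible and non-strictly-convex, as $F$ has positive dimension) cannot have real rank one, $\Omega$ has real rank at least two.

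The main obstacle I expect is the key step of the second paragraph: rigorously upgrading "$T|_V$ maps $F$ into $F$ as an open map preserving $\overline{\Ec_F}$" to "$T|_V$ is surjective onto $F$." The clean way is probably to observe that a proximal-type limit argument gives $T(\overline\Omega) = \overline F$ for a suitable $T$ (cf.\ the proof of Proposition~\ref{prop:dynamics_of_automorphisms_2}, which yields $T(\Omega) = F_\Omega(x)$ for non-tangential sequences), so at least one element of $\overline{\Lambda}^{\End}_{F,\star}$ restricts to a surjection of $F$; then a semigroup/Zariski-density bootstrap promotes this to all elements, or one simply runs the contradiction in paragraph three using only the knowledge that $\{T|_V\}$ contains \emph{one} element of $\Aut(F)$ together with its being a Zariski dense semigroup in $\Pb(\End(V))$ meeting $\PGL(V)$ — which already forces $\Aut(F) \cap \{T|_V\}$ Zariski dense, hence $\Aut(F)$ Zariski dense in $\PGL(V)$. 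I would double-check which of these routes the author intends, but each closes the argument.
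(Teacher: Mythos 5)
Your overall strategy coincides with the paper's (it is the one sketched in the introduction): reduce to $\Omega$ non-symmetric, choose $F$ of minimal dimension among faces missing $\overline{\Ec_\Omega}$, show every $T\in\overline{\Lambda}^{\End}_{F,\star}$ restricts to an element of $\Aut(F)$, and then use Theorem~\ref{thm:structure_subsets}(c) to force $\PSL(V)\subseteq\Aut(F)$, which is absurd. But the key step --- surjectivity of $T|_F$ --- is exactly where your argument has a genuine gap, as you yourself suspect. The claim that an injective projective self-map of $F$ preserving $\overline{\Ec_F}$ must be onto is not established: an injective continuous self-map of a compact set need not be surjective, your set $\bigcap_n T^n(\overline F)$ is merely a nonempty $T$-invariant compact convex set (nothing forces it to be $\overline F$), and even the identification $\overline{\Ec_\Omega}\cap\overline F=\overline{\Ec_F}$ is unjustified (a point of $\overline{\Ec_\Omega}\cap\partial F$ may be a limit of extreme points lying outside $\overline F$). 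Your fallback routes do not close it either: Proposition~\ref{prop:dynamics_of_automorphisms_2} produces some $T$ with $T(\Omega)=F$, but that $T$ need not lie in $\overline{\Lambda}^{\End}_{F,\star}$ (its kernel may meet $V$), and having one element of $\{T|_V\}$ in $\Aut(F)$ does not by itself make $\Aut(F)$ a non-discrete Zariski dense subgroup. The paper's argument (Lemma~\ref{lem:auto}) uses the minimality of $F$ quite differently: given $y\in F$, take $x_0\in T(\overline F)\cap F$ nearest to $y$ in $H_F$; if $y\neq x_0$ then $x_0=T(x_0')$ with $x_0'\in\partial F$, the face $F'$ of $x_0'$ has dimension $<\dim F$ and so meets $\overline{\Ec_\Omega}$ at some $z$; an open segment in $\overline F$ through $z$ and $x_0'$ maps to one through $T(z)$ and $x_0\in F$, so $T(z)\in F$; but $T(z)=\lim g_n(z_n)$ with $z_n\in\Ec_\Omega$, so $T(z)\in F\cap\overline{\Ec_\Omega}=\emptyset$. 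So minimality is used to push a point of $\overline{\Ec_\Omega}$ into $F$, not to control the extreme points of $F$ --- this is the missing idea.

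A secondary issue is your concluding step: ``$\Aut(F)$ Zariski dense in $\PGL(V)$ means $F$ is preserved by $\PGL(V)^{\circ}$'' is false as stated --- a cocompact lattice dividing $\Omega$ is Zariski dense in $\PGL_d(\Rb)$ yet $\PGL_d(\Rb)$ does not preserve $\Omega$; preserving a convex body is not a Zariski-closed condition. What saves the day is non-discreteness: $\Aut(F)$ is a closed, non-discrete Lie subgroup, so the Lie algebra $\gL$ of $\Aut_0(F)$ is nonzero and ${\rm Ad}$-invariant under $\Aut(F)$, hence (by Zariski density) under all of $\PGL(V)$, hence an ideal of the simple algebra $\mathfrak{sl}(V)$, giving $\Aut_0(F)=\PSL(V)$ and the contradiction $F=[V]$. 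Your final reduction of ``real rank at least two'' (via $F$ having positive dimension, so $\Omega$ is not the ball) is fine and matches the paper.
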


The rest of the section is devoted to the proof of the theorem. So suppose $\Omega \subset \Pb(\Rb^d)$ satisfies the hypothesis of the Theorem. Then let $\Gamma \leq \Aut(\Omega)$ be a discrete group acting co-compactly on $\Omega$. 

We assume for a contradiction that $\Omega$ is not symmetric with real rank at least two.

\begin{lemma} $\Omega$ is not symmetric. \end{lemma}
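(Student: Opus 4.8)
The plan is a short argument by contradiction, using only the hypotheses already in force and the classification recalled in the introduction. We are in the middle of a proof by contradiction that assumes $\Omega$ is \emph{not} symmetric with real rank at least two; to prove this Lemma I would additionally suppose, toward a contradiction, that $\Omega$ \emph{is} symmetric. Combining the two assumptions, $\Omega$ would be symmetric of real rank at most one; and since a semisimple group acting transitively on a non-compact properly convex domain cannot be compact, $\Omega$ would in fact be symmetric of real rank exactly one.

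Next I would feed this into the Koecher--Vinberg classification stated in the introduction. Among the five families listed there, the ones whose transitive semisimple group $G$ has real rank one are precisely those with $G$ locally isomorphic to $\SO(1,m)$ --- here one uses the exceptional isomorphisms $\SL_2(\Rb)\sim\SO(1,2)$, $\SL_2(\Cb)\sim\SO(1,3)$, $\SL_2(\Hb)\sim\SO(1,5)$, together with the fact that $E_{6(-26)}$ has real rank two --- and for such $G$ the domain $\Omega$ is projectively equivalent to an ellipsoid. In particular $\Omega$ is strictly convex, so every point of $\partial\Omega$ is an extreme point and hence $\overline{\Ec_\Omega}=\partial\Omega$.

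Finally, since $F$ is a non-empty boundary face we have $\emptyset\neq F\subseteq\partial\Omega=\overline{\Ec_\Omega}$, which contradicts the standing hypothesis $F\cap\overline{\Ec_\Omega}=\emptyset$. Therefore $\Omega$ is not symmetric. There is no genuine obstacle in this Lemma: the only step requiring input beyond the definitions is the observation that rank-one irreducible symmetric properly convex domains are exactly the ellipsoids, which is immediate from the classification already quoted. (The substantive work --- ruling out the symmetric rank $\geq 2$ alternative and deriving the final contradiction from $\Omega$ being non-symmetric --- is carried out in the subsequent lemmas via the semigroup $\overline{\Lambda}^{\End}_{F,\star}$ and Theorem~\ref{thm:structure_subsets}.)
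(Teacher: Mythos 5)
Your proof is correct and is essentially the paper's argument: assume $\Omega$ symmetric, conclude it has real rank one, identify it (via the classification) with an ellipsoid, note that then every boundary point is extreme, and contradict the existence of a face $F$ disjoint from $\overline{\Ec_\Omega}$. The extra detail you supply (ruling out real rank zero and tracing the exceptional isomorphisms to see that the rank-one entries of the Koecher--Vinberg list are exactly the ellipsoids) is a harmless elaboration of what the paper compresses into ``by the characterization of symmetric convex divisible domains.''
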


\begin{proof} If $\Omega$ were symmetric, then by assumption it would have real rank one. Then, by the characterization of symmetric convex divisible domains, $\Omega$ coincides with the unit ball in some affine chart. Then $\Ec_\Omega = \partial \Omega$ which is impossible since there exists a boundary face $F \subset \partial \Omega$ such that
\begin{equation*}
F \cap \overline{\Ec_\Omega} = \emptyset. \qedhere
\end{equation*}
  \end{proof}

Now we fix a boundary face $F \subset \partial\Omega$ where 
\begin{align*}
\overline{\Ec_{\Omega}} \cap F = \emptyset
\end{align*}
and if $F^\prime \subset \partial \Omega$ is a face with $\dim F^\prime < \dim F$, then 
\begin{align*}
\overline{\Ec_{\Omega}} \cap F^\prime \neq \emptyset.
\end{align*}
Then define $V := \Spanset F$.

\begin{lemma}\label{lem:auto} If $T \in \overline{\Gamma}^{\End}_{F,\star}$, then the map 
\begin{align*}
p \in F \rightarrow T(p) \in \Pb(V)
\end{align*}
is in $\Aut(F)$. \end{lemma}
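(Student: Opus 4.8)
The goal is to show that for $T \in \overline{\Lambda}^{\End}_{F,\star}$, the induced map $T|_F : F \to [V]$ is a bijection of $F$ onto itself. By Theorem~\ref{thm:structure_subsets} part (b) we already know $T(F)$ is an open subset of $F$, so the work is to upgrade this to surjectivity onto $F$. The plan is to exploit the minimality of the chosen face $F$: every proper subface of $F$ meets $\overline{\Ec_\Omega}$, while $F$ itself does not. Since $T|_V$ is a linear isomorphism of $V$, it is a homeomorphism of $[V]$ onto itself, hence a homeomorphism of $\overline{F}$ onto its image; in particular it carries $\partial F$ homeomorphically onto $\partial(T(\overline F))$ and preserves the face structure of the properly convex set $\overline F \subset [V]$ in the sense that it maps faces of $\overline F$ to faces of $T(\overline F)$.

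Here is the chain of steps I would carry out. First, observe $T(\overline F) = \overline{T(F)}$ is a properly convex subset of $[V]$ with nonempty interior (since $T(F)$ is open in $[V]$), and $T(F) \subset F$. Second, I claim $T(\overline F) = \overline F$: the extreme points of $T(\overline F)$ are exactly the images under $T$ of the extreme points of $\overline F$, and the extreme points of $\overline F$ — viewed inside $\overline\Omega$ — are extreme points of $\Omega$, hence lie in $\Ec_\Omega \cap \overline F$. But every such extreme point lies on the boundary $\partial F$, which by convexity is a union of proper subfaces of $F$, each of which meets $\overline{\Ec_\Omega}$ but which collectively must \emph{contain} $\overline{\Ec_\Omega} \cap \overline{F}$; the hypothesis $F \cap \overline{\Ec_\Omega} = \emptyset$ forces all extreme points of $\overline F$ into $\partial F$. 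Now here is the key point: $T$ maps $F$ into $F$, hence $\partial(T(\overline F)) \subset \overline F$ in a way compatible with the face decomposition, and because $T$ restricted to the lower-dimensional faces of $\overline F$ is again a limit of automorphisms with the analogous image/kernel transversality, one shows $T$ permutes the extreme points of $\overline F$ — more precisely $T(\Ec_{\overline F}) = \Ec_{\overline F}$ using that $\overline{\Ec_{\overline F}}$ is a compact set contained in the union of proper subfaces and $T$ is injective on it and maps it into itself. Taking convex hulls gives $T(\overline F) = \overline F$, and taking relative interiors gives $T(F) = F$, so $T|_F \in \Aut(F)$.

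The main obstacle I anticipate is justifying that $T$ carries the closure of the extreme-point set of $\overline F$ \emph{onto} itself rather than merely into itself — an injective continuous self-map of a compact set need not be surjective in general, so one genuinely needs the face-preserving structure. I expect this is handled by an induction on dimension using part (d) of Observation~\ref{obs:faces}, together with the minimality of $F$: on each proper subface $F' \subsetneq F$ one has $\overline{\Ec_\Omega} \cap F' \neq \emptyset$, and the restriction $T|_{F'}$ is governed by the same structure theory applied to the convex set $\overline{F}$ and its faces, so a dimension count forces $T$ to map the extreme points of $\overline F$ bijectively. An alternative, possibly cleaner route: since $T|_V$ is a linear isomorphism, $T^{-1}|_V$ makes sense, and one can argue directly that $T^{-1}(F)$ is also an open subset of $F$ (applying part (b) symmetrically, if $T^{-1}$ can be realized in $\overline{\Lambda}^{\End}$, which may require an extra argument), whence $F \subset T(F) \subset F$. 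I would try the extreme-point argument first since it avoids needing $T^{-1} \in \overline{\Lambda}^{\End}$.
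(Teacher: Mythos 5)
Your overall strategy---reduce to showing $T(F)\supset F$ and force this by tracking extreme points together with the minimality of $F$---is in the right spirit, but the central step is missing, and you flag this yourself. Two concrete problems. First, the inclusion $T(\Ec_{\overline F})\subset \Ec_{\overline F}$ is never justified: since $T|_V$ is a projective isomorphism onto its image, $T(\Ec_{\overline F})$ is exactly the set of extreme points of the compact convex body $T(\overline F)\subset\overline F$, and an extreme point of a convex \emph{subset} need not be an extreme point of the ambient body (think of a small disk inside a larger one). Your paragraph showing that all extreme points of $\overline F$ lie in $\partial F$ concerns the domain of $T$, not its image, so it does not bridge this. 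Second, even granting that inclusion, you correctly observe that an injective continuous self-map of a compact set need not be onto, and the promised ``induction on dimension / dimension count'' is never carried out; that is precisely where the content of the lemma lies. The fallback via $T^{-1}$ also does not work directly: if $g_n\to T$, then by Proposition~\ref{prop:dynamics_of_automorphisms} any limit $S$ of $g_n^{-1}$ has $\image(S)\subset \Spanset F_\Omega(y)$ for some $y\in[\ker T]$, and $[\ker T]$ meets $[V]$ trivially, so there is no reason for $S|_V$ to be $(T|_V)^{-1}$ or for $(T|_V)^{-1}$ to arise from $\overline{\Lambda}^{\End}$ at all.

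The paper's proof sidesteps all of this with a local nearest-point argument that crucially reuses the approximating automorphisms $g_n\to T$, which your main argument never invokes. Given $y\in F$, take $x_0\in T(\overline F)\cap F$ minimizing the Hilbert distance $H_F(y,\cdot)$. Since $T(F)$ is open in $F$, either $y=x_0\in T(F)$ or $x_0=T(x_0')$ for some $x_0'\in\partial F$. In the latter case the face $F'$ of $x_0'$ has $\dim F'<\dim F$, so by minimality $\overline{\Ec_\Omega}\cap F'\neq\emptyset$; choosing $z\in F'$ with $z_n\in\Ec_\Omega$, $z_n\to z$, one gets $T(z)=\lim g_n(z_n)\in\overline{\Ec_\Omega}$ because each $g_n$ preserves $\Ec_\Omega$, while on the other hand $T(z)$ and $x_0$ lie on a common open segment of $\overline F$ through the interior point $x_0\in F$, forcing $T(z)\in F$ and contradicting $F\cap\overline{\Ec_\Omega}=\emptyset$. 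If you want to salvage your extreme-point route, the usable invariance statement is that $T$ maps $\overline{\Ec_\Omega}\cap\overline F$ into $\overline{\Ec_\Omega}\cap\overline F$ (again via the $g_n$), not that it preserves $\Ec_{\overline F}$; but you would still face the injective-into-versus-onto issue that the paper's minimization argument avoids entirely.
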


\begin{proof} Notice that $T|_V \in \PGL(V)$ since $T(V) \subset V$ and $\ker T \cap V = \{0\}$. So we just have to show that $T(F)=F$.  Theorem~\ref{thm:structure_subsets} part (2) says that $T(F) \subset F$ and so we just have to show that $F \subset T(F)$. 

Fix $y \in F$. Since the set $T(\overline{F}) \cap F$ is closed in $F$, there exists $x_0 \in T(\overline{F}) \cap F$ such that 
\begin{align*}
H_F(y,x_0) = \min_{x \in T(\overline{F}) \cap F} H_F(y,x).
\end{align*}
Since $T|_V \in \PGL(V)$, the set $T(F)$ is open in $F$. So we either have $y=x_0 \in T(F)$ or $x_0 \in T(\partial F)$. Suppose for a contradiction that $x_0 \in T(\partial F)$. Then let $x_0^\prime \in \partial F$ be the point where $T(x_0^\prime)=x_0$. Next let $F^\prime \subset \partial F$ be the face of $x_0^\prime$. Then $\dim F^\prime < \dim F$, so 
\begin{align*}
\overline{\Ec_{\Omega}} \cap F^\prime \neq \emptyset.
\end{align*}
Thus we can find $z \in F^\prime$ and a sequence $(z_n)_{n \geq 1}$ in  $\Ec_{\Omega}$ such that $z_n \rightarrow z$. Since $z \in F^\prime$, there exists an open line segment $L$ in $\overline{F}$ which contains $z$ and $x_0^\prime$. Then $T(L)$ is an open line segment in $\overline{F}$ since $T|_V \in \PGL(V)$. Then, since $T(x_0^\prime) \in F$, we also have $T(z) \in F$. Since 
\begin{align*}
T \in \overline{\Gamma}^{\End}_{F,\star} \subset \overline{\Gamma}^{\End}
\end{align*} 
there exists a sequence $(g_n)_{n \geq 1}$ in $\Gamma$ such that $g_n \rightarrow T$ in $\Pb(\End(\Rb^d))$. Now $z \notin \Pb(\ker T)$ since $\ker T \cap V =\{0\}$. So by the ``moreover'' part of Observation~\ref{obs:limit of endomorphisms}
\begin{align*}
T(z) =  \lim_{n \rightarrow \infty} g_n(z_n) \in F.
\end{align*}
However, $g_n(z_n) \in \Ec_{\Omega}$ and so
\begin{align*}
T(z) \in \overline{\Ec_{\Omega}} \cap F = \emptyset.
\end{align*}
Thus we have a contradiction. Hence $y=x_0 \in T(F)$. Since $y \in F$ was arbitrary we have $F \subset T(F)$. 
\end{proof}

\begin{lemma} $\Aut(F)$ is  non-discrete and Zariski dense in $\PGL(V)$. \end{lemma}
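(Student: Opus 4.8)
The goal is to show $\Aut(F)$ is non-discrete and Zariski dense in $\PGL(V)$. The strategy is to transport the three conclusions of Theorem~\ref{thm:structure_subsets} along the restriction map, using Lemma~\ref{lem:auto}, which says precisely that $T|_V \in \Aut(F)$ whenever $T \in \overline{\Lambda}^{\End}_{F,\star}$. In other words, the semigroup
\[
\Sc := \left\{ T|_V : T \in \overline{\Lambda}^{\End}_{F,\star}\right\}
\]
is contained in $\Aut(F)$. Theorem~\ref{thm:structure_subsets}(c) tells us $\Sc$ is a non-discrete Zariski dense semigroup in $\Pb(\End(V))$, so a fortiori $\Aut(F)$ contains a non-discrete subset and its Zariski closure in $\Pb(\End(V))$ is all of $\Pb(\End(V))$. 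Since $\Aut(F) \subset \PGL(V) \subset \Pb(\End(V))$, and $\PGL(V)$ is itself Zariski closed in $\Pb(\End(V))$ (it is the locus where the determinant form is nonzero, hence an open subvariety, but its closure-compatible statement is that $\overline{\Aut(F)}^{Zar} \cap \PGL(V) = \PGL(V)$), we conclude $\Aut(F)$ is Zariski dense in $\PGL(V)$.

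For non-discreteness: the set $\Sc \subset \Aut(F)$ is non-discrete in $\Pb(\End(V))$, and the subspace topology on $\PGL(V)$ agrees with the one induced from $\Pb(\End(V))$, so $\Sc$ has an accumulation point in $\Pb(\End(V))$. A minor point to address is that this accumulation point lies in $\PGL(V)$: indeed every element of $\Sc$ lies in $\PGL(V)$ (by Lemma~\ref{lem:auto} each $T|_V$ is invertible), and $\Aut(F)$ is closed in $\PGL(V)$ since it is the stabilizer of $\overline{F}$ under the (continuous) action of $\PGL(V)$ on compact subsets of $[V]$; combined with Theorem~\ref{thm:structure_subsets}(b)/Lemma~\ref{lem:auto} pinning down the image, the limit of a convergent subsequence from $\Sc$ is again in $\Aut(F)$. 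Hence $\Aut(F)$ is non-discrete.

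For Zariski density, strictly speaking I would phrase it as: the Zariski closure $\ol{\Aut(F)}$ of $\Aut(F)$ in $\PGL(V)$ contains $\Sc$, whose Zariski closure in $\Pb(\End(V))$ is $\Pb(\End(V))$; since $\ol{\Aut(F)} \subset \PGL(V)$ and $\PGL(V)$ is Zariski dense in $\Pb(\End(V))$ (being a nonempty open subset), the Zariski closure of $\Sc$ inside $\PGL(V)$ is $\PGL(V)$, and therefore $\ol{\Aut(F)} = \PGL(V)$.

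**Main obstacle.** There is essentially no obstacle here — the work has all been done in Theorem~\ref{thm:structure_subsets} and Lemma~\ref{lem:auto}. The only thing requiring a moment's care is the interplay between the two ambient spaces $\PGL(V)$ and $\Pb(\End(V))$ for both the discreteness and the Zariski-density statements, i.e.\ checking that "non-discrete in $\Pb(\End(V))$" combined with "contained in $\PGL(V)$ with limit point still a projective automorphism of $F$" gives "non-discrete in $\Aut(F)$", and that Zariski density in the larger space descends correctly. This is routine point-set and algebraic-geometry bookkeeping.
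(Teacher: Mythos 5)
Your proof is correct and is exactly the paper's argument, which simply cites Lemma~\ref{lem:auto} and Theorem~\ref{thm:structure_subsets} and calls the conclusion immediate; the point-set and Zariski bookkeeping you spell out (non-discreteness of the semigroup already having an accumulation point inside $\Aut(F)$, and density in the open subvariety $\PGL(V)\subset\Pb(\End(V))$) is the intended, routine verification.
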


\begin{proof} This follows immediately from Lemma~\ref{lem:auto} and Theorem~\ref{thm:structure_subsets} part (3). 
\end{proof}

\begin{lemma}\label{lem:big_Z_closure} $\PSL(V) \subset \Aut(F)$. \end{lemma}

\begin{proof} Let $\Aut_0(F)$ denote the connected component of the identity in $\Aut(F)$ and let $\gL \subset \mathfrak{sl}(V)$ denote the Lie algebra of $\Aut_0(F)$. Then $\gL \neq \{0\}$ since $\Aut(F)$ is closed and non-discrete. Also $\Aut_0(F)$ is normalized by $\Aut(F)$ and so
\begin{align*}
{\rm Ad}(g) \gL = \gL
\end{align*}
for all $g \in \Aut(F)$. Then, since $\Aut(F)$ is Zariski dense in $\PGL(V)$, we see that
\begin{align*}
{\rm Ad}(g) \gL = \gL
\end{align*}
for all $g \in \PGL(V)$. Since the representation ${\rm Ad} : \PGL(V) \rightarrow \GL(\mathfrak{sl}(V))$ is irreducible, we must have $\gL = \mathfrak{sl}(V)$. Thus $\Aut_0(F) = \PSL(V)$. 

\end{proof}

\begin{proof}[Proof of Theorem~\ref{thm:main_rigidity_result}] 
The previous lemma immediately implies a contradiction: fix $x \in F$, then 
\begin{align*}
\Pb(V) \supset F \supset \Aut(F) \cdot x \supset \PSL(V) \cdot x = \Pb(V).
\end{align*}
So $F=\Pb(V)$ which contradicts the fact that $\Omega$ is properly convex. 
\end{proof}

\section{Density of bi-proximal elements}\label{sec:density_of_fixed_pts}

In this section we prove a density result for the attracting and repelling fixed points of bi-proximal elements. To state the result we need one definition. If $\Omega \subset \Pb(\Rb^d)$ is a properly convex domain and $\Gamma \leq \Aut(\Omega)$, then the \emph{limit set of $\Gamma$} is 
$$
\Lc_\Omega(\Gamma) = \bigcup_{p \in \Omega} \overline{\Gamma \cdot p} \cap \partial \Omega.
$$
Equivalently, a point $x \in \partial \Omega$ is in $\Lc_\Omega(\Gamma)$ if and only if there exist $p \in \Omega$ and a sequence $(\gamma_n)_{n \geq 1}$ in $\Gamma$ such that $\gamma_n(p) \rightarrow x$. 

\begin{theorem}\label{thm:biprox_density} Suppose that $\Omega \subset \Pb(\Rb^d)$ is a properly convex domain and $\Gamma \leq \Aut(\Omega)$ is a strongly irreducible group. If $x,y \in \Lc_\Omega(\Gamma)$ are extreme points of $\Omega$ and $(x,y) \subset \Omega$, then there exists a sequence of bi-proximal elements $(g_n)_{n \geq 1}$ in  $\Gamma$ such that 
\begin{align*}
\lim_{n \rightarrow \infty} \ell^+_{g_n} = x \quad \text{and} \quad \lim_{n \rightarrow \infty} \ell^-_{g_n} = y. 
\end{align*}
\end{theorem}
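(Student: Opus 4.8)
The plan is to construct the desired bi-proximal elements by using co-compactness to ``pull in'' the segment $[x,y]$ and then invoke the proximality criterion of Proposition~\ref{prop:crit_for_prox}. Fix a basepoint $p_0 \in \Omega$ lying on the line segment $(x,y) \subset \Omega$ (or at least choose $p_0$ and a point $m \in (x,y)$), and for each $n$ choose points $a_n \in [p_0,x)$ and $b_n \in [p_0,y)$ with $a_n \to x$ and $b_n \to y$. Since $\Lambda$ acts co-compactly on $(\Omega, H_\Omega)$, there is a sequence $g_n \in \Lambda$ with $\sup_n H_\Omega(g_n^{-1} p_0, m_n) < +\infty$ where $m_n \in (a_n, b_n)$ is, say, the Hilbert midpoint; equivalently $g_n p_0$ stays a bounded Hilbert distance from a point marching out toward the segment. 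Passing to a subsequence, we may assume $g_n \to T$ and $g_n^{-1} \to S$ in $\Pb(\End(\Rb^d))$, that $g_n p_0 \to \xi \in \partial\Omega$ and $g_n^{-1} p_0 \to \eta \in \partial\Omega$, and (after another subsequence) that $g_n(m_n) \to z \in [x,y]$. The point of choosing $m_n$ on the segment going to both $x$ and $y$ is that $g_n a_n$ and $g_n b_n$ should converge to points realizing $F_\Omega(\xi)$ and, because $x,y$ are \emph{extreme} and $(x,y) \subset \Omega$, we can arrange $\xi = $ the image of $x$ and similarly for $y$ — more precisely, I would apply Proposition~\ref{prop:dynamics_of_automorphisms_2} along the segment $[p_0, m)$ pushed forward.

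Here is the mechanism I expect to work. Apply Proposition~\ref{prop:dynamics_of_automorphisms_2} with the roles arranged so that, because $\sup_n H_\Omega(g_n p_0, p_n) < \infty$ for $p_n \in [p_0, x)$ a suitable reparametrization, we get $\image(T) = \Spanset F_\Omega(\xi)$; but since $x$ is an extreme point, and we have engineered the convergence so that $\xi = x$ (using that $m$ is on the open segment $(x,y)$, so a horoball-type / non-tangential argument forces the limit toward $x$), we conclude $F_\Omega(x) = \{x\}$ gives $\dim \image(T) = 1$, i.e. $\image(T) = x$. By Proposition~\ref{prop:dynamics_of_automorphisms}, $\eta \in [\ker T]$ and $[\ker T] \cap \Omega = \emptyset$; since $x = \image(T) \in \Omega \cup \partial\Omega$ and $(x,y) \subset \Omega$ while $[\ker T]$ is a hyperplane missing $\Omega$, I want $x \notin [\ker T]$, hence $\image(T) \oplus \ker(T) = \Rb^d$. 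Running the symmetric argument with $g_n^{-1}$ and the point $y$ gives $\image(S) = y$ with $\image(S) \oplus \ker(S) = \Rb^d$ and $\xi = x \in [\ker S]$. Now Proposition~\ref{prop:crit_for_prox} applies to the sequence $g_n$: for $n$ large, $g_n$ is proximal with $\ell_{g_n}^+ \to \image(T) = x$; applying it to $g_n^{-1}$, for $n$ large $g_n^{-1}$ is proximal with $\ell_{g_n^{-1}}^+ = \ell_{g_n}^- \to \image(S) = y$. Thus for $n$ large $g_n$ is bi-proximal with $\ell_{g_n}^+ \to x$ and $\ell_{g_n}^- \to y$, which is the claim.

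The main obstacle is pinning down that the limit point $\xi$ of $g_n p_0$ really equals $x$ (and $\eta = y$) rather than some other boundary point, and relatedly that $\dim \image(T) = 1$. Co-compactness only gives me $g_n p_0$ near \emph{some} escaping point; I must choose that escaping point to be $m_n \in (a_n,b_n)$ and then argue that the two ``anchors'' $a_n \to x$ and $b_n \to y$ force the forward limit to see exactly the face of $x$ and the backward limit exactly the face of $y$. Concretely I would track the images $g_n[a_n, b_n]$, which are line segments in $\overline\Omega$ of the form $[g_n a_n, g_n b_n]$ with $g_n m_n$ staying in a compact part of $\Omega$; a limiting argument shows $g_n a_n \to x'$ and $g_n b_n \to y'$ with $x', y' \in \partial\Omega$, $(x',y') \subset \Omega$, $m \in (x', y')$ after relabelling, and then Proposition~\ref{prop:dynamics_of_automorphisms_2} forces $\image(T) = \Spanset F_\Omega(x')$; the extreme point hypothesis on $x$ is what collapses a face to a point, but I need $x' = x$, which should follow because $a_n \in [p_0, x)$ converges non-tangentially to $x$ and $H_\Omega(g_n p_0, g_n a_n)$ is comparable to $H_\Omega(p_0, a_n) \to \infty$ at the right rate. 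Getting these bookkeeping estimates exactly right — in particular the transversality $\image \oplus \ker = \Rb^d$ for both $T$ and $S$, which is what upgrades ``limit is rank one'' to ``$g_n$ is genuinely proximal'' — is the technical heart of the argument.
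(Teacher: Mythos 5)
Your overall strategy --- produce a single degenerating sequence $g_n\in\Lambda$ with $g_n\to T$, $g_n^{-1}\to S$, $\image T = x$, $\image S = y$, and then invoke Proposition~\ref{prop:crit_for_prox} --- has a genuine gap at its first step, and you have correctly identified where: co-compactness cannot deliver such a sequence. If you anchor $g_n^{-1}p_0$ at the Hilbert midpoint $m_n$ of $(a_n,b_n)$ with $a_n\in[p_0,x)$ and $b_n\in[p_0,y)$, and $p_0\in(x,y)$, then the points $m_n$ stay in a compact subset of $\Omega$ (the whole segment $(x,y)$ lies in $\Omega$, so nothing escapes to the boundary) and the $g_n$ do not degenerate at all. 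If instead you anchor $g_np_0$ near $p_n\in[p_0,x)$, then Proposition~\ref{prop:dynamics_of_automorphisms_2} together with extremality of $x$ does give $\image T = x$, but you have no control over $g_n^{-1}p_0$: Proposition~\ref{prop:dynamics_of_automorphisms} only places its limit in $[\ker T]$, which can meet $\partial\Omega$ in a large set, and there is no reason for $\image S$ to be one-dimensional, let alone equal to $y$. Moreover the transversality $\image T\oplus\ker T=\Rb^d$ does not follow from ``$[\ker T]\cap\Omega=\emptyset$ and $(x,y)\subset\Omega$'': $[\ker T]$ is a projective subspace disjoint from $\Omega$, but it may perfectly well contain the boundary point $x=\image T$ itself. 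In effect you are trying to manufacture, in one step, group elements that translate along the axis $(y,x)$, which is essentially what the theorem asserts exists.

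The paper's proof supplies two ideas that your proposal is missing. First, it uses the degenerating sequence $h_n$ (with $h_np_0\to x$ non-tangentially, so $\image T=x$) only to \emph{conjugate} a fixed bi-proximal element $\phi_0\in\Lambda$, whose existence comes from Theorems~\ref{thm:Zariski_closure} and~\ref{thm:GM}; after choosing $g\in\Lambda$ with $g\ell^{\pm}_{\phi_0}\notin[\ker T]$ (Zariski density again), the conjugates $\phi_n=h_ng\phi_0(h_ng)^{-1}$ are bi-proximal with \emph{both} fixed points $h_ng\ell^{\pm}_{\phi_0}\to T(g\ell^{\pm}_{\phi_0})=x$. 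This sidesteps both the control of $g_n^{-1}$ and the transversality of $\ker T$. Second, having produced bi-proximal $\phi_n$ with both fixed points near $x$ and bi-proximal $\varphi_n$ with both fixed points near $y$, it combines them by ping-pong (Lemma~\ref{lem:ping_pong}): $\phi_n^m\varphi_n^{-m}$ is bi-proximal for large $m$ with attracting point near $\ell^+_{\phi_n}$ and repelling point near $\ell^+_{\varphi_n}$, and the transversality hypotheses of Proposition~\ref{prop:crit_for_prox} are verified for \emph{these} products using exactly the assumption $(x,y)\subset\Omega$: if transversality failed along a subsequence, $x$ and $y$ would lie in a common supporting hyperplane, forcing $[x,y]\subset\partial\Omega$. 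To repair your argument you would need to supply both of these steps; the direct construction as written does not go through.
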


Before proving the theorem we state and prove one corollary. 

\begin{corollary}\label{cor:biprox_density} Suppose that $\Omega \subset \Pb(\Rb^d)$ is an irreducible properly convex domain and $\Gamma \leq \Aut(\Omega)$ is a discrete group that acts co-compactly on $\Omega$. If $x,y \in \partial \Omega$ are extreme points and $(x,y) \subset \Omega$, then there exists a sequence of bi-proximal elements $(g_n)_{n \geq 1}$ in  $\Gamma$
\begin{align*}
\lim_{n \rightarrow \infty} \ell^+_{g_n} = x \quad \text{and} \quad \lim_{n \rightarrow \infty} \ell^-_{g_n} = y. 
\end{align*}
\end{corollary}

\begin{proof}[Proof of Corollary~\ref{cor:biprox_density}] A result of Vey~\cite[Theorem 5]{V1970} implies that $\Gamma$ is strongly irreducible. Proposition~\ref{prop:dynamics_of_automorphisms_2} implies that $\partial \Omega = \Lc_\Omega(\Gamma)$. So Theorem~\ref{thm:biprox_density}  implies the corollary. 
\end{proof}

\begin{proof}[Proof of Theorem~\ref{thm:biprox_density}] By definition there exist $p \in\Omega$ and a sequence $(\gamma_n)_{n \geq 1}$ in  $\Gamma$ such that $\gamma_n(p) \rightarrow x$. Passing to a subsequence we can suppose the limits 
$$
T^+ = \lim_{n \rightarrow \infty} \gamma_n \quad \text{and}  \quad T^- = \lim_{n \rightarrow \infty} \gamma_n^{-1}
$$
exist in $\Pb(\End(\Rb^d))$. By Proposition~\ref{prop:dynamics_of_automorphisms}, 
$$
\image T^+ \subset {\rm Span} F_\Omega(x) = {\rm Span} \{ x\} = x,
$$
and so $\image T^+ = x$. Proposition~\ref{prop:dynamics_of_automorphisms} also implies that $\Pb(\ker T^-) \cap \Omega = \emptyset$ and $x \in \Pb(\ker T^-)$. Notice that $y \notin \Pb(\ker T^-)$ since $(x,y) \subset \Omega$. 

Similarly, we can find a sequence $(\phi_n)_{n \geq 1}$ in $\Gamma$ such that the limits
$$
S^+ = \lim_{n \rightarrow \infty} \phi_n \quad \text{and}  \quad S^- = \lim_{n \rightarrow \infty} \phi_n^{-1}
$$
exist in $\Pb(\End(\Rb^d))$, $\image S^+ = y$, and $x \notin \Pb(\ker S^-)$. 

Fix some $x^\prime \in \image T^-$ and $y^\prime \in \image S^-$. Since $\Gamma$ is strongly irreducible, by Observation~\ref{obs:str_irred_transverse} there exists $h \in \Gamma$ such that 
\begin{enumerate}
\item $h(y^\prime) \notin\Pb( \ker T^+)$, hence $h\left( \image S^-\right) \not\subset \ker T^+$,
\item $hS^-(x) \notin \Pb(\ker T^+)$, 
\item $h(x^\prime) \notin \Pb(\ker S^+)$, hence $h\left(\image T^-\right) \not\subset \ker S^+$,
\item $hT^-(y) \notin \Pb(\ker S^+)$.
\end{enumerate}

Then consider $g_n = \gamma_n \circ h \circ \phi_n^{-1}$. By our choice of $h$, we have $T^+ \circ h \circ S^- \neq 0$ and hence 
$$
T^+ \circ h \circ S^-=\lim_{n \rightarrow \infty} g_n
$$
in $\Pb(\End(\Rb^d))$. Notice that $\image( T^+ \circ h \circ S^-) = \image T^+ = x$ and by our choice of $h$, 
$$
x \notin \Pb(\ker (T^+ \circ h \circ S^-)).
$$
So 
$$
\image( T^+ \circ h \circ S^-)+  \ker (T^+ \circ h \circ S^-) =x+ \ker (T^+ \circ h \circ S^-) = \Rb^d
$$
and hence, by Proposition~\ref{prop:crit_for_prox}, $g_n$ is proximal for $n$ sufficiently large and $\ell_{g_n}^+ \rightarrow x$. 

Similar reasoning shows that $g_n^{-1}$ is proximal for $n$ sufficiently large and $\ell_{g_n}^-=\ell_{g_n^{-1}}^+ \rightarrow y$. 

\end{proof} 

\section{North-South dynamics}\label{sec:NS_dynamics}

In this section we prove a stronger version of Theorem~\ref{thm:biprox_density} for pairs of extreme points in the limit set whose simplicial distance is greater than two.

\begin{theorem}\label{thm:duality} Suppose that $\Omega \subset \Pb(\Rb^d)$ is a properly convex domain and $\Gamma \leq \Aut(\Omega)$ is strongly irreducible. Assume $x,y \in \Lc_\Omega(\Gamma)$ are extreme points of $\Omega$ and $s_{\partial \Omega}(x,y) > 2$. If $A,B \subset \overline{\Omega}$ are neighborhoods of $x,y$, then there exists  $g \in \Gamma$ with 
\begin{align*}
g\left(\overline{\Omega} \setminus B\right) \subset A \text{ and } g^{-1}\left(\overline{\Omega} \setminus A\right) \subset B.
\end{align*}
\end{theorem}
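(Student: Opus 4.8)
The plan is to establish the claimed north–south dynamics by extracting a limit in $\Pb(\End(\Rb^d))$ whose image is $x$ and whose kernel hyperplane misses $\ol{\Omega}$ and contains $y$, and then upgrading from a single convergent sequence to the uniform ``proximal-like'' statement using the hypothesis $s_{\partial\Omega}(x,y) > 2$. Concretely, I would first invoke Theorem~\ref{thm:biprox_density}: since $x,y$ are extreme points, if we knew $(x,y)\subset\Omega$ we would get bi-proximal $g_n\in\Lambda$ with $\ell_{g_n}^+\to x$, $\ell_{g_n}^-\to y$. The hypothesis $s_{\partial\Omega}(x,y)>2$ in particular forces $[x,y]\not\subset\partial\Omega$, hence $(x,y)\subset\Omega$, so Theorem~\ref{thm:biprox_density} applies. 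Fix one such $g_n$ with $n$ large, call it $g$; I want to show that a sufficiently high power $g^k$ does the job, or more precisely that the dynamics of $g$ on $\ol\Omega$ are genuine north–south dynamics with attracting point $x' := \ell_g^+$ near $x$ and repelling point $y' := \ell_g^-$ near $y$.

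The key mechanism is Theorem~\ref{thm:char_of_rank_one} applied to $g$: since $g$ is bi-proximal, $\inf_p H_\Omega(p,gp)>0$, and for $n$ large $\ell_g^\pm$ are close to $x,y$ so $s_{\partial\Omega}(\ell_g^+,\ell_g^-)>2$ (one needs that simplicial distance cannot jump down in the limit near a pair with distance $>2$ — this follows because $s_{\partial\Omega}(\ell_g^+,\ell_g^-)\le 2$ would give a segment or two-segment path, and a compactness/limit argument would produce $[x,y]\subset\partial\Omega$ or $s_{\partial\Omega}(x,y)\le 2$). Part (2) of Theorem~\ref{thm:char_of_rank_one} then gives that for every $w\in\partial\Omega$ we have $(\ell_g^+,w)\cup(w,\ell_g^-)\subset\Omega$. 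Combined with Observation~\ref{obs:iterates_of_proximal} — which gives $g^m \to T_g$ in $\Pb(\End(\Rb^d))$ with $\image T_g = \ell_g^+$, $\ker T_g = H_g^-$, and $[H_g^-]\cap\Omega=\emptyset$ — this lets me show $\ol\Omega\setminus[H_g^-]$ maps under $g^m$ into any prescribed neighborhood of $\ell_g^+$, uniformly, for $m$ large. Symmetrically for $g^{-m}$ and $\ell_g^-$, $H_g^+$. The remaining point is to control the ``bad'' set: I must ensure $\ol\Omega\setminus B \subset \ol\Omega\setminus[H_g^-]$, i.e. that $[H_g^-]\cap\ol\Omega$ is contained in the neighborhood $B$ of $y$. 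This is where part (2) of Theorem~\ref{thm:char_of_rank_one} is essential: $[H_g^-]\cap\ol\Omega = F_\Omega(\ell_g^-)$ (the face at the repelling point), and shrinking $g = g_n$ appropriately (taking $n$ large so $\ell_g^-$ is close to $y$, hence $F_\Omega(\ell_g^-)$ is close to $y$ by upper semicontinuity of faces, using that $y$ is extreme so $F_\Omega(y) = \{y\}$) puts $F_\Omega(\ell_g^-)\subset B$.

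So the proof assembles as: (i) $(x,y)\subset\Omega$ from $s_{\partial\Omega}(x,y)>2$; (ii) apply Theorem~\ref{thm:biprox_density} to get bi-proximal $g_n$ with $\ell_{g_n}^\pm\to x,y$; (iii) for $n$ large, $s_{\partial\Omega}(\ell_{g_n}^+,\ell_{g_n}^-)>2$, so $g_n$ is a rank one isometry and Theorem~\ref{thm:char_of_rank_one}(2) applies, forcing $F_\Omega(\ell_{g_n}^\pm)$ to shrink to $\{x\},\{y\}$; (iv) fix such $g = g_n$ with $F_\Omega(\ell_g^+)\subset A$, $F_\Omega(\ell_g^-)\subset B$; (v) use Observation~\ref{obs:iterates_of_proximal} plus uniform convergence on compacta to find $m$ with $g^m(\ol\Omega\setminus B)\subset g^m(\ol\Omega\setminus[H_g^-])\subset A$ and $g^{-m}(\ol\Omega\setminus A)\subset B$; (vi) the desired element is $g^m$. \textbf{The main obstacle} I anticipate is step (iii)–(iv): proving that $F_\Omega(\ell_{g_n}^-)$ actually converges to $\{y\}$ (not merely that $\ell_{g_n}^-\to y$), which requires combining the rank-one conclusion that these faces are ``small'' (their closures lie in $\Omega$ away from $\ell_{g_n}^\mp$) with a semicontinuity argument for faces at extreme points; one must be careful that a priori $F_\Omega(\ell_{g_n}^-)$ could be a large face whose closure merely passes near $y$. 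The extremality of $y$ together with Theorem~\ref{thm:char_of_rank_one}(2) (which shows segments from $\ell_{g_n}^-$ to any boundary point, except possibly to $\ell_{g_n}^+$, go into $\Omega$) is exactly what rules this out, and making that rigorous is the technical heart.
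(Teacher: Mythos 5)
Your proposal is correct and follows essentially the same route as the paper: approximate $(x,y)$ by axes of bi-proximal elements via Theorem~\ref{thm:biprox_density}, use semicontinuity of the simplicial distance to keep $s_{\partial\Omega}(\ell_{g_n}^+,\ell_{g_n}^-)>2$, invoke Theorem~\ref{thm:char_of_rank_one}(2) to see that $[H_{g_n}^-]\cap\overline{\Omega}$ reduces to the single point $\ell_{g_n}^-$, and then take a high power (the paper packages this last step as Lemma~\ref{lem:NS}). The ``technical heart'' you worry about is in fact immediate from that lemma's argument, since the rank-one property forces $[\ker T_{g_n}]\cap\overline{\Omega}=\{\ell_{g_n}^-\}$ directly, so no semicontinuity of faces is needed.
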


\begin{remark} Theorem~\ref{thm:duality} is an analogue of a result for $\CAT(0)$ spaces, see~\cite[Chapter 3, Theorem 3.4]{B1995}.
\end{remark}

Before proving the theorem we state and prove one corollary. 

\begin{corollary}\label{cor:duality} Suppose that $\Omega \subset \Pb(\Rb^d)$ is an irreducible properly convex domain and $\Gamma \leq \Aut(\Omega)$ is a discrete group acting co-compactly on $\Omega$. Assume $x,y \in \partial \Omega$ are extreme points and $s_{\partial \Omega}(x,y) > 2$. If $A,B \subset \overline{\Omega}$ are neighborhoods of $x,y$, then there exists  $g \in \Gamma$ with 
\begin{align*}
g\left(\overline{\Omega} \setminus B\right) \subset A \text{ and } g^{-1}\left(\overline{\Omega} \setminus A\right) \subset B.
\end{align*}
\end{corollary}

\begin{proof}[Proof of Corollary~\ref{cor:duality}] A result of Vey~\cite[Theorem 5]{V1970} implies that $\Gamma$ is strongly irreducible. Proposition~\ref{prop:dynamics_of_automorphisms_2} implies that $\partial \Omega = \Lc_\Omega(\Gamma)$. So Theorem~\ref{thm:duality} implies the corollary. 
\end{proof} 

We need one lemma.

\begin{lemma}\label{lem:NS} Suppose that $\Omega \subset \Pb(\Rb^d)$ is a properly convex domain, $\gamma \in \Aut(\Omega)$ is bi-proximal, and $s_{\partial \Omega}(\ell_\gamma^+, \ell_\gamma^-) > 2$. If $A,B \subset \overline{\Omega}$ are neighborhoods of $\ell_\gamma^+,\ell_\gamma^-$, then there exists  $N \geq 0$ such that
\begin{align*}
\gamma^n\left(\overline{\Omega} \setminus B\right) \subset A \text{ and } \gamma^{-n}\left(\overline{\Omega} \setminus A\right) \subset B
\end{align*}
for all $n \geq N$. 
\end{lemma}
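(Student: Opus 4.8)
The plan is to exploit the north-south dynamics of a single bi-proximal element, together with the rank one consequence that $\gamma$ has exactly the two fixed points $\ell_\gamma^\pm$ on $\partial\Omega$ with every other boundary point ``far'' from them in the sense of Theorem~\ref{thm:char_of_rank_one}. Recall from Observation~\ref{obs:iterates_of_proximal} that $T_\gamma := \lim_{n\to\infty}\gamma^n$ exists in $\Pb(\End(\Rb^d))$ with $\image T_\gamma = \ell_\gamma^+$ and $\ker T_\gamma = H_\gamma^-$, and similarly $T_{\gamma^{-1}} := \lim_{n\to\infty}\gamma^{-n}$ has image $\ell_\gamma^-$ and kernel $H_\gamma^+$. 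By Observation~\ref{obs:structure_subsets}(a), $[\ker T_\gamma]\cap\Omega = \emptyset$ and $[\ker T_{\gamma^{-1}}]\cap\Omega=\emptyset$. The key additional input is that $s_{\partial\Omega}(\ell_\gamma^+,\ell_\gamma^-) > 2$ forces $\overline\Omega$ to avoid the repelling hyperplanes except at the obvious point: I claim $[H_\gamma^-]\cap\overline\Omega = \{\ell_\gamma^-\}$ and $[H_\gamma^+]\cap\overline\Omega = \{\ell_\gamma^+\}$. Indeed $\ell_\gamma^- \in H_\gamma^-$ since $\ell_\gamma^-$ is the attracting line of $\gamma^{-1}$ and hence lies in the sum of the non-dominant eigenspaces of $\gamma$; if some other point $z \in [H_\gamma^-]\cap\overline\Omega$ existed, then $[z,\ell_\gamma^-]$ would be a segment in $[H_\gamma^-]$, hence $\gamma^n$ fixes the projective line through them and, examining the two cases $z\in\Omega$ (excluded by Observation~\ref{obs:structure_subsets}(a)) versus $z\in\partial\Omega$, one derives using Theorem~\ref{thm:char_of_rank_one}(3) that $s_{\partial\Omega}(\ell_\gamma^-, z)=\infty$ unless $z = \ell_\gamma^\pm$, and $z=\ell_\gamma^+\in H_\gamma^-$ is impossible since $\ell_\gamma^+\oplus H_\gamma^- = \Rb^d$; a cleaner route is simply: $\overline\Omega\cap[H_\gamma^-]$ is a compact convex set preserved by $\gamma$, on which $\gamma$ acts with all eigenvalues of modulus $\le\lambda_2(\gamma)<\lambda_1(\gamma)$, so by the convergence $\gamma^{-n}$ on $\Pb(\Rb^d)\setminus[H_\gamma^+]$ this set is carried to $\ell_\gamma^-$, forcing it to be a single point.

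Granting that, the proof of the lemma is a compactness argument. Fix neighborhoods $A\ni\ell_\gamma^+$ and $B\ni\ell_\gamma^-$ in $\overline\Omega$. Then $\overline\Omega\setminus B$ is compact and, since $[H_\gamma^-]\cap\overline\Omega = \{\ell_\gamma^-\}\subset B$, we have $(\overline\Omega\setminus B)\cap[H_\gamma^-] = \emptyset$, i.e. $\overline\Omega\setminus B$ is a compact subset of $\Pb(\Rb^d)\setminus[\ker T_\gamma]$. By the uniform convergence of $\gamma^n \to T_\gamma$ on compact subsets of $\Pb(\Rb^d)\setminus[\ker T_\gamma]$ (the Observation in Section on limits of linear maps) and $T_\gamma(\overline\Omega\setminus B) \subset \{\ell_\gamma^+\}\subset A$, there is $N_1$ with $\gamma^n(\overline\Omega\setminus B)\subset A$ for all $n\ge N_1$. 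Applying the identical reasoning to $\gamma^{-1}$, using $[H_\gamma^+]\cap\overline\Omega = \{\ell_\gamma^+\}\subset A$ and $T_{\gamma^{-1}}\to$ the constant $\ell_\gamma^-$, there is $N_2$ with $\gamma^{-n}(\overline\Omega\setminus A)\subset B$ for all $n\ge N_2$. Taking $N := \max\{N_1,N_2\}$ completes the proof.

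I expect the main obstacle to be the structural claim $[H_\gamma^-]\cap\overline\Omega = \{\ell_\gamma^-\}$ (and its mirror), since this is exactly where the hypothesis $s_{\partial\Omega}(\ell_\gamma^+,\ell_\gamma^-)>2$ is used and it is not a purely formal manipulation — one must rule out $[H_\gamma^-]$ touching $\partial\Omega$ at a nontrivial face. The eigenvalue/convergence argument sketched above (every point of $\overline\Omega\cap[H_\gamma^-]$ is attracted to $\ell_\gamma^-$ under $\gamma^{-n}$, yet $\overline\Omega\cap[H_\gamma^-]$ is $\gamma$-invariant and compact, hence shrinks to a point) is the argument I would actually write out; alternatively one can invoke Theorem~\ref{thm:char_of_rank_one}(2)--(3) directly. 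Everything after that is a routine application of locally uniform convergence of projective-linear maps to a rank one limit.
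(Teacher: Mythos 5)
Your overall architecture is exactly the paper's: reduce everything to the structural claim $[\ker T_\gamma]\cap\overline{\Omega}=[H_\gamma^-]\cap\overline{\Omega}=\{\ell_\gamma^-\}$ (and its mirror), then conclude by locally uniform convergence of $\gamma^{\pm n}$ to the rank one limits on compact subsets of the complement of the kernels. You also correctly identify that the structural claim is the only place the hypothesis $s_{\partial\Omega}(\ell_\gamma^+,\ell_\gamma^-)>2$ can enter. The paper establishes that claim by invoking Theorem~\ref{thm:char_of_rank_one} part (2), which is essentially your ``first route'' (your phrase ``$\gamma^n$ fixes the projective line through them'' is not needed and not quite right, but the real content --- that $[z,\ell_\gamma^-]\subset[H_\gamma^-]\cap\overline{\Omega}\subset\partial\Omega$ would give $s_{\partial\Omega}(z,\ell_\gamma^-)\le 1$, contradicting Theorem~\ref{thm:char_of_rank_one}(3) --- is sound).

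However, the ``cleaner route'' that you say you would actually write out has a genuine gap. The set $K:=\overline{\Omega}\cap[H_\gamma^-]$ may meet $[H_\gamma^+\cap H_\gamma^-]$, and on that locus $\gamma^{-n}$ does \emph{not} converge to $\ell_\gamma^-$; so the assertion that $K$ ``is carried to $\ell_\gamma^-$'' is unjustified precisely where it matters. Worse, that argument never uses the hypothesis $s_{\partial\Omega}(\ell_\gamma^+,\ell_\gamma^-)>2$, so it cannot be correct: for a simplex $\Omega$ with $\gamma$ diagonal with distinct eigenvalue moduli, $\gamma$ is bi-proximal and $\overline{\Omega}\cap[H_\gamma^-]$ is an entire closed face, not a point (here $s_{\partial\Omega}(\ell_\gamma^+,\ell_\gamma^-)=2$, and the vertex lying in $[H_\gamma^+\cap H_\gamma^-]$ is exactly the point your convergence argument misses). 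So you should discard the eigenvalue/compactness route and run the argument through Theorem~\ref{thm:char_of_rank_one}(2) (or (3)), as the paper does; with that substitution your proof is complete and coincides with the paper's.
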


\begin{proof} Observation~\ref{obs:iterates_of_proximal} implies that
\begin{equation}
\label{eqn:convergence in some proof}
\ell_\gamma^+ = \lim_{n \rightarrow \infty} \gamma^n(x)
\end{equation}
for all $x \in \Pb(\Rb^d) - \Pb(H_g^-)$ and the convergence is locally uniform. 

We claim that 
\begin{align*}
\Pb(H_g^-) \cap \overline{\Omega} = \{ \ell_g^-\}.
\end{align*}
Proposition~\ref{prop:dynamics_of_automorphisms} implies that $\{\ell_g^-\} \subset \Pb(H_g^-) \cap \overline{\Omega}$. Proposition~\ref{prop:dynamics_of_automorphisms} also implies that $\Omega \cap \Pb(H_g^-)=\emptyset$. So if $y \in \Pb(H_g^-) \cap \overline{\Omega}$, then $[y, \ell_g^-] \subset \Pb(H_g^-)\cap \overline{\Omega}$ and hence $[y,\ell_g^-] \subset \partial \Omega$. Then by Theorem~\ref{thm:char_of_rank_one} part (2) we have $y = \ell_g^+$. So $\Pb(H_g^-) \cap \overline{\Omega} \subset \{ \ell_g^-\}$ and the claim is established. 

Then by the locally uniform convergence in Equation~\eqref{eqn:convergence in some proof}, there exists $N_1 > 0$ such that
\begin{align*}
\gamma^n\left(\overline{\Omega} \setminus B\right) \subset A
\end{align*}
for all $n \geq N_1$. 

Repeating the same argument with $\gamma^{-1}$ shows that there exists $N_2 > 0$ such that
\begin{align*}
\gamma^{-n}\left(\overline{\Omega} \setminus A\right) \subset B
\end{align*}
for all $n \geq N_2$. 

Then $N = \max\{N_1, N_2\}$ satisfies the conclusion of the lemma. 
 
\end{proof}

\begin{proof}[Proof of Theorem~\ref{thm:duality}] By Theorem~\ref{thm:biprox_density} there exists a sequence of bi-proximal elements $(g_n)_{n \geq 1}$ in  $\Gamma$ such that 
\begin{align*}
\lim_{n \rightarrow \infty} \ell^+_{g_n} = x \quad \text{and} \quad \lim_{n \rightarrow \infty} \ell^-_{g_n} = y. 
\end{align*}
Since  $s_{\partial \Omega}(x,y) > 2$ we may pass to a tail of $(g_n)_{n \geq 1}$ and assume that 
\begin{align*}
s_{\partial \Omega}(\ell^+_{g_n},\ell^-_{g_n}) > 2
\end{align*} 
for all $n$. 

Next fix $n$ sufficiently large such that $\ell^+_{g_n}  \in A$ and $\ell^-_{g_n}  \in B$. Then by Lemma~\ref{lem:NS} there exists $m \geq 0$ such that 
\begin{align*}
g_n^m\left(\overline{\Omega} \setminus B\right) \subset A \text{ and } g_n^{-m}\left(\overline{\Omega} \setminus A\right) \subset B.
\end{align*}
So $g = g_n^m$ satisfies the theorem. 
\end{proof}

\section{Fixed points and centralizers}\label{sec:Fixed points and centralizers}

In this section we prove the following result connecting the number of boundary fixed points of an element with the size of its centralizer. 

\begin{theorem}\label{thm:fixed_pts_versus_centralizers} Suppose that $\Omega \subset \Pb(\Rb^d)$ is an irreducible properly convex domain and $\Gamma \leq \Aut(\Omega)$ is a discrete group that acts co-compactly on $\Omega$. If $g \in \Gamma$ has infinite order, then the following are equivalent: 
\begin{enumerate}
\item  there exist two distinct points $x,y \in \partial \Omega$ fixed by $g$ with $s_{\partial \Omega}(x,y) < +\infty$, 
\item $g$ fixes at least three points in $\partial \Omega$, 
\item the cyclic group $g^{\Zb}$ has infinite index in its centralizer. 
\end{enumerate}
\end{theorem} 

As a corollary we will obtain the following. 

\begin{corollary}\label{cor:biprox_prop} Suppose that $\Omega \subset \Pb(\Rb^d)$ is an irreducible properly convex domain and $\Gamma \leq \Aut(\Omega)$ is a discrete group that acts co-compactly on $\Omega$. If $g \in \Gamma$ is bi-proximal, then the following are equivalent: 
\begin{enumerate}
\item $[\ell_g^+, \ell_g^-] \subset \partial \Omega$, 
\item $s_{\partial \Omega}(\ell^+_g, \ell^-_g) < +\infty$,
\item $g$ has at least three fixed points in $\partial \Omega$,
\item the cyclic group $g^{\Zb}$ has infinite index in its centralizer.
\end{enumerate}
\end{corollary}

We will first recall some results established in~\cite{IZ2019}, then prove the theorem and corollary.

\subsection{Maximal Abelian subgroups and minimal translation sets}

We have the following description of maximal Abelian subgroups. 

\begin{theorem}[{Islam-Z.~\cite[Theorem 1.6]{IZ2019}}]\label{thm:max_abelian} Suppose that $\Omega \subset \Pb(\Rb^d)$ is a properly convex domain and $\Gamma \leq \Aut(\Omega)$ is a discrete group that acts co-compactly on $\Omega$. If $A \leq \Gamma$ is a maximal Abelian subgroup of $\Gamma$, then there exists a properly embedded simplex $S \subset \Omega$ such that 
\begin{enumerate}
\item $S$ is $A$-invariant, 
\item $A$ acts co-compactly on $S$, and 
\item $A$ fixes each vertex of $S$. 
\end{enumerate}
Moreover, $A$ has a finite index subgroup isomorphic to $\Zb^{\dim(S)}$. 
\end{theorem}

\begin{remark} The above result is a special case of Theorem 1.6 in~\cite{IZ2019} which holds in the more general case when $\Gamma \leq \Aut(\Omega)$ is a naive convex co-compact subgroup.
\end{remark}

\begin{definition} Suppose that $\Omega \subset \Pb(\Rb^d)$ is a properly convex domain and $g \in \Aut(\Omega)$. Define the \emph{minimal translation length of $g$} to be
\begin{align*}
\tau_\Omega(g): = \inf_{x \in \Omega} H_\Omega(x, g (x))
\end{align*}
and the \emph{minimal translation set of $g$} to be
\begin{align*}
\Min_\Omega(g) = \{ x \in\Omega: H_\Omega(g(x),x) = \tau_\Omega(g) \}.
\end{align*}
\end{definition}

Cooper-Long-Tillmann~\cite{CLT2015} showed that the minimal translation length of an element can be determined from its eigenvalues. 

\begin{proposition}\cite[Proposition 2.1]{CLT2015}\label{prop:min_trans_compute} If $\Omega \subset \Pb(\Rb^d)$ is a properly convex domain and $g \in \Aut(\Omega)$, then 
\begin{align*}
\tau_\Omega(g) = \frac{1}{2} \log \frac{\lambda_1(g)}{\lambda_d(g)}.
\end{align*}
\end{proposition}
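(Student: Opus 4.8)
The plan is to pass to the linear picture. Fix a lift $\hat g \in \SL_d^{\pm}(\Rb)$ of $g$ and let $C\subset\Rb^d$ be the open convex cone with $\Omega = [C]$; then $\overline{C}$ is a proper cone (closed, solid, containing no line) and $\hat g\overline{C}=\overline{C}$. For $\tilde u,\tilde w\in\overline{C}$ set $m(\tilde u,\tilde w)=\sup\{t\ge 0: \tilde u-t\tilde w\in\overline{C}\}$ and $M(\tilde u,\tilde w)=\inf\{s\ge 0: s\tilde w-\tilde u\in\overline{C}\}$; a direct computation with the cross ratio (cf.\ \cite[Section 28]{BK1953}) gives, for $\tilde u,\tilde w\in C$,
\[
H_\Omega([\tilde u],[\tilde w]) = \tfrac12\log\frac{M(\tilde u,\tilde w)}{m(\tilde u,\tilde w)}.
\]
Thus $\tau_\Omega(g)=\tfrac12\log\inf_{\tilde x\in C}\big(M(\tilde x,\hat g\tilde x)/m(\tilde x,\hat g\tilde x)\big)$, and the two inequalities of the proposition reduce to estimating $m$ and $M$.

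\textbf{Lower bound.} Since $\hat g^{\pm1}$ preserve $\overline{C}$, the transpose $\hat g^{\mathsf T}$ preserves the dual proper cone $\overline{C}^{\,*}=\{\psi:\psi|_{\overline C}\ge 0\}$, so by Perron--Frobenius for proper cones there are $\psi^+,\psi^-\in\overline{C}^{\,*}\setminus\{0\}$ with $\hat g^{\mathsf T}\psi^+=\lambda_1(g)\psi^+$ and $\hat g^{\mathsf T}\psi^-=\lambda_d(g)\psi^-$ (here $\lambda_1(g)$ is the spectral radius of $\hat g^{\mathsf T}$ and $\lambda_d(g)^{-1}$ that of $(\hat g^{\mathsf T})^{-1}$). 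For $\tilde x\in C$ one has $\psi^\pm(\tilde x)>0$. If $\tilde x-t\hat g\tilde x\in\overline C$ then $0\le\psi^+(\tilde x-t\hat g\tilde x)=(1-t\lambda_1(g))\psi^+(\tilde x)$, so $m(\tilde x,\hat g\tilde x)\le 1/\lambda_1(g)$; if $s\hat g\tilde x-\tilde x\in\overline C$ then $0\le\psi^-(s\hat g\tilde x-\tilde x)=(s\lambda_d(g)-1)\psi^-(\tilde x)$, so $M(\tilde x,\hat g\tilde x)\ge 1/\lambda_d(g)$. Hence $H_\Omega([\tilde x],[\hat g\tilde x])\ge\tfrac12\log(\lambda_1(g)/\lambda_d(g))$ for every $\tilde x$, and therefore $\tau_\Omega(g)\ge\tfrac12\log(\lambda_1(g)/\lambda_d(g))$.

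\textbf{Upper bound.} First dispose of the case $\lambda_1(g)=\lambda_d(g)$: then every eigenvalue of $\hat g$ has modulus $1$, so by Perron--Frobenius $\hat g$ is semisimple and $\overline{\{\hat g^n\}}$ is compact; averaging a compact orbit in $C$ produces a $g$-fixed point of $\Omega$, whence $\tau_\Omega(g)=0$. So assume $\lambda_1(g)>\lambda_d(g)$ (equivalently $\lambda_1(g)>1>\lambda_d(g)$, since $|\det\hat g|=1$). By Perron--Frobenius there are non-proportional $\tilde v^+,\tilde v^-\in\overline{C}\setminus\{0\}$ with $\hat g\tilde v^+=\lambda_1(g)\tilde v^+$ and $\hat g\tilde v^-=\lambda_d(g)\tilde v^-$, so $[\tilde v^+]\ne[\tilde v^-]$ are $g$-fixed points of $\overline{\Omega}$; neither lies in $\Omega$, since an interior fixed point would force all eigenvalue moduli to coincide. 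If the open segment $([\tilde v^+],[\tilde v^-])$ meets $\Omega$, then it is a $g$-invariant properly embedded line, and computing $[\,[\tilde v^+],\tilde x,\hat g\tilde x,[\tilde v^-]\,]$ directly shows $g$ translates along it by exactly $\tfrac12\log(\lambda_1(g)/\lambda_d(g))$, giving $\tau_\Omega(g)\le\tfrac12\log(\lambda_1(g)/\lambda_d(g))$ and completing the proof.

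\textbf{The main obstacle.} The remaining case is $([\tilde v^+],[\tilde v^-])\subset\partial\Omega$, where no $g$-invariant geodesic in $\Omega$ realizes the minimal displacement. Here the segment lies in a single open boundary face $F:=F_\Omega(z)$ ($z$ its midpoint), and since $g$ permutes the points of the segment it fixes $F$ setwise, so inside $F$ the segment is a genuine $g$-invariant geodesic. I would then build a minimizing sequence for $g$ on $\Omega$ by pushing points $x\in([\tilde v^+],[\tilde v^-])$ slightly into $\Omega$: with $\tilde x_0$ a lift of $x$ and $\tilde p\in C$ fixed, set $\tilde x_k=(1-\tfrac1k)\tilde x_0+\tfrac1k\tilde p\in C$. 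One has $\lambda_d(g)\tilde x_0\le_{\overline C}\hat g\tilde x_0\le_{\overline C}\lambda_1(g)\tilde x_0$ (immediate from $\tilde x_0\in\Rb_{\ge0}\tilde v^++\Rb_{\ge0}\tilde v^-$), i.e.\ $m(\tilde x_0,\hat g\tilde x_0)=1/\lambda_1(g)$ and $M(\tilde x_0,\hat g\tilde x_0)=1/\lambda_d(g)$; the task is to show these values persist in the limit, $m(\tilde x_k,\hat g\tilde x_k)\to 1/\lambda_1(g)$ and $M(\tilde x_k,\hat g\tilde x_k)\to 1/\lambda_d(g)$ — equivalently, that the chords of $\Omega$ through $[\tilde x_k]$ and $[\hat g\tilde x_k]$ converge to $([\tilde v^+],[\tilde v^-])$. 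Only upper semicontinuity of $m$ (and lower semicontinuity of $M$) is automatic, so controlling this limit — the one place where the local geometry of $\partial\Omega$ near the flat face $F$ enters — is the crux; I expect this, rather than the lower bound, to be where the real work lies. A complementary route is to observe that any precompact minimizing sequence converges to an interior minimizer, whose $g$-orbit then lies on a $g$-invariant geodesic line with endpoints eigenlines of $\hat g$ and translation length forced to equal $\tfrac12\log(\lambda_1(g)/\lambda_d(g))$ by the lower bound, reducing the whole question to the non-precompact situation just described.
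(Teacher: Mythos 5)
The paper does not actually prove this proposition --- it is quoted verbatim from \cite[Proposition 2.1]{CLT2015} --- so there is no internal argument to compare against; I am judging your proposal on its own. Your reduction to the cone gauges $m,M$ and your lower bound via Perron--Frobenius eigenfunctionals $\psi^{\pm}$ in the dual cone are correct and complete. The upper bound, however, has two genuine gaps. First, in the case $\lambda_1(g)=\lambda_d(g)$ your claim that Perron--Frobenius forces $\hat g$ to be semisimple with precompact cyclic group is false: the unipotent matrix $\begin{pmatrix}1&1\\0&1\end{pmatrix}$ preserves the closed first quadrant, and, more to the point, parabolic automorphisms of the round ball have all eigenvalue moduli equal to $1$, are not semisimple, and fix no point of $\Omega$. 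The conclusion $\tau_\Omega(g)=0$ is still true there, but it needs an argument that produces points with small displacement without any minimizer existing --- which is really the same difficulty as your main obstacle, not a separate easy case.

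Second, the case you flag as ``the main obstacle,'' $([\tilde v^+],[\tilde v^-])\subset\partial\Omega$, is indeed where the content of the proposition lies, and you have not closed it. Note that it is not a degenerate corner case but the typical higher-rank situation: for $\Omega$ a simplex and $g$ diagonal, the segment joining the vertices corresponding to $\lambda_1$ and $\lambda_d$ is an edge of $\partial\Omega$; and this is exactly the situation in which the present paper invokes the proposition (Section 7, where $\tau_S(g)=\tau_\Omega(g)$ for a properly embedded simplex $S$). As you correctly observe, the semicontinuity you get for free ($M$ l.s.c., $m$ u.s.c.) is in the useless direction, and for a generic interior perturbation $\tilde x_k$ the desired convergence $M(\tilde x_k,\hat g\tilde x_k)\to 1/\lambda_d(g)$ can genuinely fail; one must choose the approach to the boundary segment adapted to the dynamics of $\hat g$ (e.g.\ via the full generalized eigenspace decomposition, or by averaging $\lambda_1^{-k}\hat g^k\tilde p$ over long orbit segments so that the error terms are controlled by $\overline C$-inequalities), which is the substance of the Cooper--Long--Tillmann argument. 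As it stands your proposal proves the lower bound and the upper bound only when the two extreme eigenlines are joined by a chord through $\Omega$.
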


\begin{remark} Recall, that 
\begin{align*}
\lambda_1(g) \geq \lambda_2(g) \geq \dots \geq \lambda_d(g)
\end{align*}
denote the absolute values of the eigenvalues of some (hence any) lift of $g$ to $\SL_d^{\pm}(\Rb):=\{ h \in \GL_d(\Rb) : \det h = \pm 1\}$. 
\end{remark}

As a consequence of Proposition~\ref{prop:min_trans_compute} we observe the following. 

\begin{observation}\label{obs:min_trans_iterates} If $\Omega \subset \Pb(\Rb^d)$ is a properly convex domain, $p_0 \in \Omega$, and $g \in \Aut(\Omega)$, then 
$$
\lim_{n \rightarrow \infty} \frac{1}{n} H_\Omega(g^n(p_0), p_0) = \tau_\Omega(g). 
$$
\end{observation}

\begin{proof} Proposition~\ref{prop:min_trans_compute} implies that $\tau_\Omega(g^n) = n \tau_\Omega(g)$ and hence 
$$
\liminf_{n \rightarrow \infty} \frac{1}{n} H_\Omega(g^n(p_0), p_0) \geq \tau_\Omega(g). 
$$
For the other inequality, fix $\epsilon > 0$ and $q \in \Omega$ with $H_\Omega(g(q), q) < \tau_\Omega(g)+\epsilon$. Then 
\begin{align*}
\limsup_{n \rightarrow \infty} & \frac{H_\Omega(g^n(p_0), p_0)}{n}  \leq \limsup_{n \rightarrow \infty} \frac{H_\Omega(g^n(q), q)+ 2H_\Omega(p_0, q)}{n} \\
& \leq    \limsup_{n \rightarrow \infty} \frac{H_\Omega(g^n(q), g^{n-1}(q))+ \dots + H_\Omega(g(q), q)+ 2H_\Omega(p_0, q)}{n} \\
& = \limsup_{n \rightarrow \infty} H_\Omega(g(q), q)+ \frac{2H_\Omega(p_0, q)}{n} < \tau_\Omega(g) + \epsilon.
\end{align*}
Since $\epsilon > 0$ was arbitrary, the proof is complete. 
\end{proof}

Next, given a group $G$ and an element $g \in G$, let $C_G(g)$ denote the centralizer of $g$ in $G$. Then given a subset $X \subset G$, define
\begin{align*}
C_G(X)= \cap_{x \in X} C_G(x).
\end{align*}

In~\cite{IZ2019}, the following result about centralizers and minimal translation sets of Abelian groups was established. 

\begin{theorem}[{Islam-Z.~\cite[Theorem 1.10]{IZ2019}}]\label{thm:centralizers_act_cocpctly} Suppose that $\Omega \subset \Pb(\Rb^d)$ is a properly convex domain, $\Gamma \leq \Aut(\Omega)$ is a discrete group that acts co-compactly on $\Omega$, and $A \leq \Gamma$ is an Abelian subgroup. Then
\begin{align*}
\Min_\Omega(A): =\bigcap_{a \in A} \Min_\Omega(a) 
\end{align*}
is non-empty and $C_{\Gamma}(A)$ acts co-compactly on the convex hull of $\Min_\Omega(A)$ in $\Omega$. 
\end{theorem}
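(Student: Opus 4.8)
The plan is to obtain non-emptiness of $\Min_\Omega(A)$ from the trichotomy of Theorem~\ref{thm:max_abelian}, and then to derive co-compactness of the centralizer from a pigeonhole argument built on discreteness of $\Lambda$ together with co-compactness of the ambient action. First I would record the observation that makes the centralizer relevant: if $b \in \Lambda$ commutes with $a \in \Lambda$, then $H_\Omega(bx, a(bx)) = H_\Omega(bx, b(ax)) = H_\Omega(x, ax)$ for all $x \in \Omega$, so $b$ preserves $\Min_\Omega(a)$. Hence $C_\Lambda(A)$ preserves $\Min_\Omega(A)$ and, acting by projective transformations, also the convex hull $\Cc_A$ of $\Min_\Omega(A)$ in $\Omega$; it then remains only to prove $\Min_\Omega(A) \neq \emptyset$ and that $C_\Lambda(A)$ acts co-compactly on $\Cc_A$.

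For non-emptiness I would enlarge $A$ to a maximal Abelian subgroup $\widehat{A} \leq \Lambda$, so that $\Min_\Omega(\widehat{A}) \subseteq \Min_\Omega(A)$, and invoke Theorem~\ref{thm:max_abelian}. If $\widehat{A}$ is finite, then every $a \in \widehat{A}$ has $\lambda_1(a) = \lambda_d(a)$, so $\tau_\Omega(a) = 0$ by Proposition~\ref{prop:min_trans_compute} and $\Min_\Omega(a) = \{x \in \Omega : ax = x\}$; the common fixed point supplied by Theorem~\ref{thm:max_abelian} then lies in $\Min_\Omega(\widehat{A})$. If instead $\widehat{A}$ preserves a properly embedded line $\ell$ (respectively simplex $S$) and fixes its endpoints (respectively vertices), I would show $\ell \subseteq \Min_\Omega(\widehat{A})$ (respectively $S \subseteq \Min_\Omega(\widehat{A})$): properness of the embedding forces $H_\Omega$ to restrict to the intrinsic Hilbert metric of $\ell$ (respectively $S$), in the model coordinates each $a \in \widehat{A}$ acts as a diagonal projective transformation whose entries are the eigenvalues $\mu_i(a)$ of a lift of $a$ at the fixed boundary points, and a cross-ratio computation gives $H_\Omega(x, ax) = \tfrac12 \log\bigl(\max_i |\mu_i(a)| / \min_j |\mu_j(a)|\bigr)$ for every such point $x$; since $\max_i |\mu_i(a)| \leq \lambda_1(a)$ and $\min_j |\mu_j(a)| \geq \lambda_d(a)$, Proposition~\ref{prop:min_trans_compute} forces this value to equal $\tau_\Omega(a)$, so $x \in \Min_\Omega(a)$.

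For the co-compactness of $C_\Lambda(A)$ on $\Cc_A$ — the substantive part — I would first pass to a finite-index $\Zb^k \leq A$ using Theorem~\ref{thm:max_abelian}, and fix generators $a_1, \dots, a_k$ of it, a compact fundamental domain $K_0$ for $\Lambda$ acting on $\Omega$, and a basepoint $x_0 \in \Cc_A$. Given $x \in \Cc_A$, co-compactness supplies $\gamma \in \Lambda$ with $H_\Omega(\gamma x_0, x)$ bounded in terms of $K_0$ alone; the conjugates $\gamma^{-1} a_i \gamma$ keep the translation lengths of the $a_i$, and — granting that $\Cc_A \subseteq \Min_\Omega(a_i)$, whence $H_\Omega(x, a_i x) = \tau_\Omega(a_i)$ — a short triangle-inequality estimate bounds $H_\Omega(x_0, \gamma^{-1} a_i \gamma\, x_0)$ uniformly in $x$. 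Discreteness and properness of the $\Lambda$-action then leave only finitely many possibilities for the tuple $(\gamma^{-1} a_1 \gamma, \dots, \gamma^{-1} a_k \gamma)$, necessarily of the form $(\delta^{-1} a_1 \delta, \dots, \delta^{-1} a_k \delta)$ for $\delta$ in a fixed finite subset of $\Lambda$, so that $\gamma \delta^{-1} \in C_\Lambda(A)$ carries $x_0$ to within a uniformly bounded distance of $x$; this is precisely co-compactness. The hard part will be the structural input used here — that the convex hull $\Cc_A$ stays inside each $\Min_\Omega(a_i)$, equivalently that the minimal sets $\Min_\Omega(a)$ are convex and assemble into a common ``parallel set'' for $A$. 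I expect this to need either an identification of $\Min_\Omega(a)$ with $\Omega$ intersected with the span of the top and bottom generalized eigenspaces of a lift of $a$, or a convexity estimate for displacement functions along Hilbert geodesics; producing this structure, rather than running the pigeonhole, is the main obstacle.
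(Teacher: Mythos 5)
The paper does not actually prove this statement: it is imported wholesale from \cite[Theorem 1.9]{IZ2019}, so there is no internal argument to compare yours against. Taken on its own terms, your outline has the right shape. The non-emptiness step is essentially complete: reducing to a maximal Abelian $\widehat{A}$ via Theorem~\ref{thm:max_abelian} and then sandwiching $\tau_\Omega(a) \leq H_\Omega(x,ax) \leq H_S(x,ax) = \tau_S(a) \leq \tau_\Omega(a)$ using Propositions~\ref{prop:min_trans_compute} and~\ref{prop:min_set_inv_simplex} works (and your claim that $H_\Omega$ restricts to the intrinsic Hilbert metric on a properly embedded line or simplex is in fact correct, since properness forces $\partial S \subset \partial\Omega$ and hence forces the cross-ratio endpoints to agree). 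The Ballmann--Eberlein-style pigeonhole for the centralizer is also the standard strategy, and the paper itself runs the same kind of argument in Section~\ref{sec:rank_PS}.

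However, the step you flag yourself is a genuine gap, not a formality. Your pigeonhole needs $H_\Omega(x, a_i x) = \tau_\Omega(a_i)$ for \emph{every} $x$ in the convex hull of $\Min_\Omega(A)$, i.e.\ that this hull is contained in each $\Min_\Omega(a_i)$. In a Hilbert geometry the displacement function $x \mapsto H_\Omega(x, ax)$ is not convex along projective lines (convexity of the metric characterizes the ellipsoid), so neither convexity of $\Min_\Omega(a)$ nor the containment of the hull follows from soft arguments; one needs the linear-algebraic description of $\Min_\Omega(a)$ in terms of the top and bottom eigenspaces of a lift of $a$, and that description is precisely the substance of the proof in~\cite{IZ2019}. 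Without it, your uniform bound on $H_\Omega(x_0, \gamma^{-1}a_i\gamma\, x_0)$ fails for points of the hull lying outside $\bigcap_i \Min_\Omega(a_i)$ and the co-compactness argument stalls. A second, smaller defect: you run the pigeonhole with generators $a_1,\dots,a_k$ of a finite-index subgroup $A' \leq A$, so the element $\gamma\delta^{-1}$ you produce lies only in $C_\Lambda(A')$, which contains $C_\Lambda(A)$ but may be strictly larger; since $A$ is finitely generated (it sits inside a virtually $\Zb^k$ group by Theorem~\ref{thm:max_abelian}), you should simply run the argument with a generating set of $A$ itself.
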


\begin{remark} The above result is a special case of Theorem 1.9 in~\cite{IZ2019} which holds in the more general case when $\Gamma \leq \Aut(\Omega)$ is a naive convex co-compact subgroup.
\end{remark}

Finally, we will use the following observations.

\begin{proposition}\label{prop:min_set_inv_simplex} Suppose that $S \subset \Pb(\Rb^d)$ is a simplex. If $g \in \Aut(S)$ fixes every vertex of $S$, then $\Min_S(g) =S$. 
\end{proposition}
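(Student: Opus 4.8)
\textbf{Proof plan for Proposition~\ref{prop:min_set_inv_simplex}.}

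The plan is to compute the Hilbert distance $H_\Omega(x,gx)$ explicitly on the simplex and show that it is constant on $S$, equal to $\tau_\Omega(g)$. Wait --- actually the statement is about $\Min_\Omega(g)$ where $g \in \Aut(S)$, so presumably $S = \Omega$ here (or $S$ is being regarded as a properly convex domain in its own span); I will work with the domain $S$ and its Hilbert metric $H_S$. First I would normalize coordinates: by definition of a simplex, after applying an element of $\PGL_d(\Rb)$ we may assume $S = \{[x_1:\dots:x_{k+1}:0:\dots:0] : x_i > 0\}$ sitting inside its span $V = \Spanset S \cong \Rb^{k+1}$, so that $S$ is the projectivization of the positive orthant in $\Rb^{k+1}$. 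An element $g \in \Aut(S)$ that fixes every vertex of $S$ must preserve each of the $k+1$ coordinate axes; since it is linear on $V$, it acts on $V$ as a diagonal matrix $\operatorname{diag}(\mu_1,\dots,\mu_{k+1})$ with $\mu_i > 0$ (up to scalar and sign, which we may arrange by choosing the lift with positive entries).

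Next I would carry out the cross-ratio computation. Fix $x = [x_1:\dots:x_{k+1}] \in S$ with all $x_i > 0$, and let $L$ be the projective line through $x$ and $gx = [\mu_1 x_1 : \dots : \mu_{k+1} x_{k+1}]$. The two boundary points of $S$ on $L$ are obtained by pushing $x$ along the segment toward $gx$ until some coordinate vanishes: writing points on $L$ in the chart as $t \mapsto (\,t \mu_i x_i + (1-t) x_i\,)_i$ up to scale --- more conveniently, parametrize the ray through $x$ and $gx$ inside the orthant by $s \mapsto (\mu_i^s x_i)_i$, which is the full geodesic. As $s \to -\infty$ the point converges in $\Pb(V)$ to the vertex $e_j$ where $j$ minimizes $\mu_j$ among those coordinates with $x_j \neq 0$ --- actually one must be a little careful: it converges to the projectivization of the face spanned by the coordinates achieving the minimal $\mu$, but for the cross-ratio only the line matters, and a standard computation (this is exactly the content of the quasi-isometry between the Hilbert metric on a simplex and a normed space, cf.\ the references cited before Section~\ref{sec:PES}) gives that the endpoints $a, b \in \partial S$ on $L$ satisfy
\begin{align*}
[a,x,gx,b] = \frac{\max_i \mu_i}{\min_i \mu_i}.
\end{align*}
Hence $H_S(x, gx) = \tfrac12 \log\bigl( \max_i \mu_i / \min_i \mu_i \bigr)$, which is \emph{independent of $x$}. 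Therefore $\tau_S(g) = \tfrac12\log(\max_i\mu_i/\min_i\mu_i)$ and every point of $S$ realizes it, so $\Min_S(g) = S$.

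The step I expect to be the main obstacle is making the cross-ratio computation fully rigorous, in particular identifying the two boundary endpoints $a,b$ of the line $L$ on $\partial S$ correctly when several of the $\mu_i$ coincide, and confirming that the correction terms do not affect the value. I would handle this cleanly by diagonalizing: in the affine chart $\{x_{k+1} = 1\}$ restricted to $V$, the geodesic from $x$ to $gx$ through $S$ is the image of $s \mapsto (\mu_i^{\,s} x_i)_{i}$, its two endpoints are attained as $s \to \pm\infty$, and the cross-ratio of four points on a line is invariant under projective maps, so I may reduce to the two-dimensional face of $S$ spanned by the index achieving $\min_i \mu_i$ and the index achieving $\max_i \mu_i$ (passing $x$ to that face changes neither the line $L$ nor the cross-ratio of the four relevant points). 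On a triangle this is the elementary computation that the Hilbert displacement of $\operatorname{diag}(\mu_{\min},\mu_{\max})$ equals $\tfrac12\log(\mu_{\max}/\mu_{\min})$, uniformly. Alternatively, one can invoke Proposition~\ref{prop:min_trans_compute}: since $g|_V = \operatorname{diag}(\mu_1,\dots,\mu_{k+1})$ we have $\lambda_1(g) = \max_i\mu_i$ and $\lambda_{\dim S + 1}(g) = \min_i \mu_i$ computed in $V$, giving $\tau_S(g) = \tfrac12\log(\lambda_1/\lambda_{\dim S+1})$, and the explicit computation above shows this value is attained at every $x \in S$; combining the two yields $\Min_S(g) = S$.
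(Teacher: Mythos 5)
Your overall strategy is correct and gives a self-contained argument where the paper offers none: the paper simply cites \cite[Proposition 7.2]{IZ2019} for this statement, so there is no in-paper proof to compare against. Your normalization ($S$ = projectivized positive orthant in $V\cong\Rb^{k+1}$, $g|_V=\operatorname{diag}(\mu_1,\dots,\mu_{k+1})$ with $\mu_i>0$) is right, the claimed value $H_S(x,gx)=\tfrac12\log(\max_i\mu_i/\min_i\mu_i)$ is correct and independent of $x$, and since $\Min_S(g)\subset S$ by definition, constancy of the displacement immediately yields $\Min_S(g)=S$. (You are also right that the "$\Omega$" in the statement should be read as $S$ itself, viewed as a properly convex domain in $\Pb(V)$.)

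Two steps in your justification of the cross-ratio value are wrong as written, though easily repaired. First, the curve $s\mapsto(\mu_i^{\,s}x_i)_i$ is a Hilbert geodesic of the simplex (it is a straight line in the normed-space model) but it is \emph{not} the projective line $L_{x,gx}$ unless the $\mu_i$ take only two values; in particular its limits as $s\to\pm\infty$ are barycenter-type points of faces and are not the chord endpoints $a,b$ entering the definition of $H_S$. Second, "passing $x$ to the two-dimensional face spanned by the extremal indices" does change the line $L$, so that reduction is not legitimate. The clean fix is to compute the chord directly: points of $L_{x,gx}$ are $[\alpha v+\beta w]$ with $v=(x_i)$, $w=(\mu_i x_i)$, and such a point lies in $\overline S$ iff $\alpha+\beta\mu_i\geq 0$ for all $i$; the two endpoints are therefore
\begin{align*}
a=\bigl[(\mu_{\max}-\mu_i)x_i\bigr]_i,\qquad b=\bigl[(\mu_i-\mu_{\min})x_i\bigr]_i,
\end{align*}
and evaluating the cross ratio in the projective coordinate $t=\alpha/\beta$ (where $a,x,gx,b$ sit at $t=-\mu_{\max},\infty,0,-\mu_{\min}$) gives $[a,x,gx,b]=\mu_{\max}/\mu_{\min}$ exactly, with no correction terms and no case analysis when several $\mu_i$ coincide. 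Alternatively you may simply quote the closed-form expression $H_S(x,y)=\tfrac12\log\max_{i,j}(x_iy_j)/(x_jy_i)$ for the Hilbert metric on a simplex, which with $y=gx$ gives the constant $\tfrac12\log(\max_i\mu_i/\min_i\mu_i)$ in one line. With either repair your proof is complete.
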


\begin{proof} See for instance~\cite[Proposition 7.3]{IZ2019}.\end{proof}

\begin{observation}\label{obs:centralizer of rank one isometry} Suppose that $\Omega \subset \Pb(\Rb^d)$ is a properly convex domain and $\Gamma \leq \Aut(\Omega)$ is a discrete group. If $g \in \Gamma$ is bi-proximal and $(\ell_g^+, \ell_g^-) \subset \Omega$, then $g^{\Zb}$ has finite index in $C_{\Gamma}(g)$. 
\end{observation} 

\begin{proof} First notice that $C_\Gamma(g)$ preserves $(\ell_g^+, \ell_g^-)$. Since $\Aut(\Omega)$ acts properly on $\Omega$ and $\Gamma \leq \Aut(\Omega)$ is discrete, we then see that $C_{\Gamma}(g)$ acts properly on $(\ell_g^+, \ell_g^-)$. Then $g^{\Zb}$ has finite index in $C_{\Gamma}(g)$ since $g^{\Zb}$ acts co-compactly on $(\ell_g^+, \ell_g^-)$.
\end{proof}

\subsection{Proof of Theorem~\ref{thm:fixed_pts_versus_centralizers}} Fix a maximal Abelian subgroup $A \leq \Gamma$ which contains $g$. Then by Theorem~\ref{thm:max_abelian} there exists $S \subset \Omega$ such that
\begin{itemize}
\item $S$ is a properly embedded simplex,
\item $A$ acts co-compactly on $S$, 
\item $A$ fixes every vertex of $S$, and
\item $A$ has a finite index subgroup isomorphic to $\Zb^{\dim(S)}$. 
\end{itemize}
Since $g$ has infinite order, $\dim(S) \geq 1$. 

We consider a number of cases and prove that in each case \textbf{(1)}, \textbf{(2)}, and \textbf{(3)} are either all true or all false. 

\medskip

\noindent \underline{Case 1:} Assume $\dim(S) \geq 2$. Then clearly \textbf{(1)}, \textbf{(2)}, and \textbf{(3)} are all true. 

\medskip

\noindent \underline{Case 2:} Assume $\dim(S) = 1$. Let $v^+, v^-$ be the vertices of $S$ and fix some $p_0 \in S$. Then, after possibly relabelling, we can assume that
$$
\lim_{n \rightarrow \pm \infty} g^n(p_0) = v^\pm.
$$

\noindent \underline{Case 2 (a):} Assume $s_{\partial \Omega}(v^+, v^-) >2$. Then Theorem~\ref{thm:char_of_rank_one} implies that $g$ is a rank one isometry and $v^\pm  = \ell_g^\pm$. Theorem~\ref{thm:char_of_rank_one} also implies that $v^+, v^-$ are the only fixed points of $g$ in $\partial \Omega$ and $s_{\partial \Omega}(v^+, v^-) = \infty$. Hence \textbf{(1)} and \textbf{(2)} are false. Observation~\ref{obs:centralizer of rank one isometry} implies that $g^{\Zb}$ has finite index in $C_{\Gamma}(g)$  and hence \textbf{(3)} is false.

\medskip

\noindent \underline{Case 2 (b):} Assume $s_{\partial \Omega}(v^+, v^-) = 2$. Then, by definition, \textbf{(1)} is true. Fix $y_0 \in \partial \Omega$ such that $[v^+, y_0] \cup [y_0,v^-]$. 

Pick a sequence $n_j \rightarrow \infty$ such that the limits
$$
T^{\pm} : = \lim_{j \rightarrow \infty} g^{\pm n_j}
$$
exist in $\Pb(\End(\Rb^d))$. Then Proposition~\ref{prop:dynamics_of_automorphisms} implies that $v^{\mp} \in \Pb(\ker T^\pm)$ and $\Pb(\ker T^\pm) \cap \Omega = \emptyset$. This implies that $v^\pm \notin \Pb(\ker T^\pm)$ since $(v^+, v^-) \subset \Omega$. Also, $g$ commutes with $T^\pm$ and hence $g\Pb(\ker T^\pm) = \Pb(\ker T^\pm)$.

Passing to a further sequence we can suppose that $g^{\pm n_j}(y_0) \rightarrow y^\pm$. Then 
$$
[v^+, y^\pm] \cup [y^\pm, v^-] \subset \partial \Omega
$$
Then, since $(v^+, v^-) \subset \Omega$, $y^\pm$ must be distinct from $v^+$ and $v^-$. Since $g^{\pm n_j}(x) \rightarrow v^\pm$ for all $x \in \Pb(\Rb^d) \setminus \Pb(\ker T^\pm)$ we must have $y \in \Pb( \ker T^+ \cap \ker T^-)$. Thus the set 
\begin{align*}
C:=\partial \Omega \cap \Pb(\ker T^+ \cap \ker T^-)
\end{align*}
is non-empty. Then $g$ has a fixed point $y \in C$ since $C$ is $g$-invariant, closed, and convex. So $g$ has at least three fixed points in $\partial \Omega$ and  \textbf{(2)} is true.

Recall that  $v^{\mp} \in \Pb(\ker T^\pm)$ and $\Pb(\ker T^\pm) \cap \Omega = \emptyset$, hence 
$$
[v^+, y] \cup [y, v^-] \subset \partial \Omega.
$$
Let $S^\prime$ be the open simplex with vertices $v^+, v^-, y$. Since $(v^+, v^-) \subset  \Omega$ we have $S^\prime \subset \Omega$. In particular 
\begin{align}
\label{eq:comparison}
H_{S^\prime}(p,q) \geq H_\Omega(p,q)
\end{align}
for all $p,q \in S^\prime$. Since $p_0 \in (v^-, v^+) \subset S^\prime \subset \Omega$, Observation~\ref{obs:min_trans_iterates} implies that
$$
\tau_\Omega(g) = \lim_{n \rightarrow \infty} \frac{ H_\Omega(g^n(p_0), p_0)}{n} = \lim_{n \rightarrow \infty} \frac{ H_{S^\prime}(g^n(p_0), p_0)}{n} = \tau_{S^\prime}(g). 
$$
Then by Equation~\eqref{eq:comparison} and Proposition~\ref{prop:min_set_inv_simplex}
\begin{align*}
S^\prime =\Min_{S^\prime}(g) \subset \Min_\Omega(g).
\end{align*}

Now we claim that $g^{\Zb}$ has infinite index in $C_{\Gamma}(g)$. Theorem~\ref{thm:centralizers_act_cocpctly} implies that there is a compact set $K \subset \Omega$ such that 
$$
S^\prime \cup (v^+, v^-) \subset C_{\Gamma}(g)\cdot K.
$$
Further, $g^{\Zb}$ preserves $(v^+, v^-)$, so it is enough to show that 
$$
\sup_{p \in S^\prime} H_\Omega(p, (v^+, v^-)) =\infty.
$$
Fix a sequence $(p_n)_{n \geq 1}$ in $S^\prime$ converging to $y$. Since $(v^+, v^-) \subset \Omega$ and $[v^+, y] \cup [y,v^-] \subset \partial \Omega$, Observation~\ref{obs:faces} implies that the faces $F_\Omega(v^+)$, $F_\Omega(v^-)$, and $F_\Omega(y)$ are all distinct. Then, by the definition of the Hilbert metric, 
$$
\lim_{n \rightarrow \infty} H_\Omega(p_n, (v^+, v^-)) = \infty.
$$
Thus $g^{\Zb}$ has infinite index in $C_{\Gamma}(g)$ and so \textbf{(3)} is true.

\subsection{Proof of Corollary~\ref{cor:biprox_prop}} Theorem~\ref{thm:fixed_pts_versus_centralizers} implies that \textbf{(2)} $\Rightarrow$ \textbf{(3)} $\Leftrightarrow$ \textbf{(4)} and by definition \textbf{(1)} $\Rightarrow$ \textbf{(2)}. Finally, by Observation~\ref{obs:centralizer of rank one isometry}, \textbf{(4)} $\Rightarrow$  \textbf{(1)}.

\section{Rank in the sense of Prasad-Raghunathan}\label{sec:rank_PS}

In this section we consider the rank of a group in the sense of Prasad and Raghunathan~\cite{PR1972}.

\begin{definition}[Prasad-Raghunathan] 
Suppose that $\Gamma$ is an abstract group. For $i \geq 0$, let $A_i(\Gamma) \subset \Gamma$ be the subset of elements whose centralizer contains a free Abelian group of rank at most $i$ as a subgroup of finite index. Next define $r(\Gamma)$ to be the minimal $i \in \{0,1,2,\dots \} \cup \{ \infty\}$ such that there exist $\gamma_1,\dots, \gamma_m \in \Gamma$ with 
\begin{align*}
\Gamma \subset \bigcup_{j=1}^m \gamma_j A_i(\Gamma).
\end{align*}
Then the \emph{Prasad-Raghunathan rank of $\Gamma$} is defined to be
\begin{align*}
{\rm rank}_{{\rm PR}}(\Gamma) : = \sup \left\{ r(\Gamma^*) : \Gamma^* \text{ is a finite index subgroup of } \Gamma \right\}.
\end{align*}
\end{definition}

Prasad and Raghunathan computed the rank of  lattices in semisimple Lie groups which implies the following. 

\begin{theorem}[{Prasad-Raghunathan~\cite[Theorem 3.9]{PR1972}}]\label{thm:PS_uniform_lattices} Suppose that $\Omega \subset \Pb(\Rb^d)$ is an irreducible properly convex domain. If $\Omega$ is symmetric with real rank $r$ and $\Gamma \leq \Aut(\Omega)$ is a discrete group acting co-compactly on $\Omega$, then ${\rm rank}_{{\rm PR}}(\Gamma)=r$. 
\end{theorem}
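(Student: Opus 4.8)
The statement to prove is Theorem~\ref{thm:PS_uniform_lattices}: if $\Omega$ is an irreducible symmetric properly convex domain of real rank $r$ and $\Lambda\le\Aut(\Omega)$ acts co-compactly, then ${\rm rank}_{{\rm PS}}(\Lambda)=r$. The whole point is to reduce this to the classical computation of Prasad--Raghunathan~\cite{PR1972} for uniform lattices in semisimple Lie groups. So the first step is to identify $\Lambda$ with such a lattice. By definition of symmetric, there is a semisimple Lie group $G\le\PGL_d(\Rb)$ preserving $\Omega$ and acting transitively, with real rank $r$. Since $\Aut(\Omega)$ is a closed subgroup of $\PGL_d(\Rb)$ containing $G$ and acting properly on $\Omega$ (Proposition~\ref{prop:hilbert_basic}), and $G$ already acts transitively, $\Aut(\Omega)$ differs from $G$ only by a compact stabilizer direction; in particular $\Aut(\Omega)$ is a Lie group with finitely many components whose identity component is (up to finite index / finite kernel) the semisimple group $G$, and $\Omega\cong \Aut(\Omega)/K$ for a maximal compact $K$. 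Then $\Lambda\le\Aut(\Omega)$ discrete and co-compact on $\Omega$ is exactly a uniform lattice in $\Aut(\Omega)$, hence virtually a uniform lattice in $G$.

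**Applying Prasad--Raghunathan.** Having made this identification, I would invoke \cite[Theorem 3.9]{PR1972} (or the relevant corollary there): for a uniform lattice $\Gamma$ in a connected semisimple Lie group $G$ with trivial compact factors, $r(\Gamma)$ equals the $\Rb$-rank of $G$, and since the same holds for every finite-index subgroup of $\Gamma$ (which is again a uniform lattice in $G$), one gets ${\rm rank}_{{\rm PS}}(\Gamma)=\Rb\text{-rank}(G)=r$. The only subtlety is passing between $\Lambda$ and an honest lattice in a connected semisimple $G$: one must handle the compact kernel/cokernel and the finite component group of $\Aut(\Omega)$. But ${\rm rank}_{{\rm PS}}$ is manifestly a commensurability invariant (it is defined via a max over finite-index subgroups), and quotienting by a finite central subgroup or restricting to a finite-index subgroup does not change the $A_i$'s in a way that affects $r$; so $\Lambda$ and its realization as a lattice in $G$ have the same Prasad--Raghunathan rank.

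**The main obstacle.** The genuinely delicate point is not the group theory but making the identification ``$\Omega$ symmetric $\Rightarrow$ $\Lambda$ is a uniform lattice in a semisimple Lie group'' fully rigorous, in particular checking that $\Aut(\Omega)$ itself (not just the a priori given $G$) is semisimple-up-to-compact-and-finite, and that co-compactness of the $\Lambda$-action on $\Omega$ is equivalent to co-compactness in $\Aut(\Omega)$. The latter is immediate from $\Omega=\Aut(\Omega)/K$ with $K$ compact. For the former: $\Aut(\Omega)$ acts properly and transitively on $\Omega$, so it is a Lie group acting properly transitively on a contractible manifold; its identity component contains the semisimple $G$ with no compact factors (a compact factor would fix a point of $\Omega$ and act nontrivially, contradicting transitivity/irreducibility together with the Koecher--Vinberg classification), and any larger connected group preserving a properly convex $\Omega$ must still be reductive with the same semisimple part because $\Aut(\Omega)$ is algebraic and its unipotent radical would act on $\Omega$ with no fixed point in a way incompatible with properness. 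Once this is pinned down, the rest is a direct citation. I would therefore structure the proof as: (1) recall $\Omega\cong\Aut(\Omega)/K$ and that $\Aut(\Omega)$ is a real algebraic group whose identity component is reductive with semisimple part $G$ of $\Rb$-rank $r$ and no compact factors; (2) conclude $\Lambda$ is commensurable to a uniform lattice in $G$; (3) invoke \cite[Theorem 3.9]{PR1972} and commensurability-invariance of ${\rm rank}_{{\rm PS}}$ to conclude ${\rm rank}_{{\rm PS}}(\Lambda)=r$.
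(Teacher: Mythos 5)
Your proposal is correct and follows exactly the route the paper takes: the paper offers no argument beyond citing Prasad--Raghunathan's computation of the rank of uniform lattices in semisimple Lie groups, and your reduction (identify $\Omega\cong\Aut(\Omega)/K$, realize $\Lambda$ as a uniform lattice commensurable with one in the semisimple group $G$ of real rank $r$, then apply \cite[Theorem 3.9]{PR1972} together with commensurability invariance of ${\rm rank}_{{\rm PS}}$) is precisely the intended unpacking of that citation.
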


As a corollary to Selberg's lemma we have the following lower bound on the Prasad-Raghunathan rank. 

\begin{corollary}[To Selberg's Lemma]\label{cor:selberg} If $\Gamma \leq \PGL_d(\Rb)$ is a finitely generated infinite group, then ${\rm rank}_{\rm PR}(\Gamma) \geq 1$. \end{corollary} 

\begin{proof} By Selberg's lemma, there exists a finite index torsion-free subgroup $\Gamma^* \leq \Gamma$. Notice that every element of $A_0(\Gamma^*)$ has finite order and hence $A_0(\Gamma^*)=\{\id\}$. Then, since $\Gamma^*$ is infinite, 
\begin{equation*}
{\rm rank}_{\rm PR}(\Gamma) \geq r(\Gamma^*) \geq 1. \qedhere
\end{equation*}
\end{proof} 

In this section we will show that the existence of a rank one isometry implies that the Prasad-Raghunathan rank is one. 

\begin{proposition}\label{prop:PR_rank_one} Suppose that $\Omega \subset \Pb(\Rb^d)$ is a properly convex domain and $\Gamma \leq \Aut(\Omega)$ is a finitely generated strongly irreducible discrete group. If there exists a bi-proximal element $g \in \Gamma$ with $(\ell_g^+, \ell_g^-) \subset \Omega$, then
\begin{align*}
{\rm rank}_{{\rm PR}}(\Gamma)=1.
\end{align*}
\end{proposition}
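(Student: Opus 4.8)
The plan is to show that $\Lambda$ has a rank one isometry and then reduce to the Prasad--Raghunathan machinery already available. First, observe that the hypothesis $(\ell_g^+,\ell_g^-)\subset\Omega$ is precisely the negation of condition (1) in Theorem~\ref{thm:biprox_prop}. Hence by the equivalences in that theorem, $s_{\partial\Omega}(\ell_g^+,\ell_g^-)\le 2$ fails to be finite only if... wait — more carefully, the negation of (3) gives $s_{\partial\Omega}(\ell_g^+,\ell_g^-)=+\infty$, so in particular $s_{\partial\Omega}(\ell_g^+,\ell_g^-)>2$, which by definition means $g$ is a rank one isometry. So the real content is: a divisible convex domain admitting a rank one isometry has $\mathrm{rank}_{\mathrm{PS}}(\Lambda)=1$.

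Next I would bound $r(\Lambda^*)\ge 1$ from below for every finite index subgroup $\Lambda^*$: since $\Lambda$ is infinite (it acts cocompactly on the noncompact space $\Omega$), so is $\Lambda^*$, hence $\Lambda^*$ contains an infinite order element, whose centralizer contains $\Zb$, so $r(\Lambda^*)\ge 1$ always. The crux is the upper bound $r(\Lambda^*)\le 1$ for a suitable finite index $\Lambda^*$ — equivalently $\mathrm{rank}_{\mathrm{PS}}(\Lambda)\le 1$. For this I would use Theorem~\ref{thm:max_abelian}: every maximal Abelian subgroup $A\le\Lambda$ has a finite index subgroup $\cong\Zb^k$, and moreover by Theorem~\ref{thm:centralizers_act_cocpctly} the centralizer $C_\Lambda(A)$ acts cocompactly on the convex hull of $\mathrm{Min}_\Omega(A)$. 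The key point is that when $\Lambda$ contains a rank one isometry $g$, no maximal Abelian subgroup can have rank $\ge 2$: if $A$ were a $\Zb^2$ (up to finite index), Theorem~\ref{thm:max_abelian} would force an $A$-invariant properly embedded $2$-simplex $S$ (or at least a $2$-dimensional flat-like object) on which $A$ acts cocompactly — and one should derive a contradiction with the existence of the rank one isometry, either directly (a rank one isometry cannot preserve or be "parallel to" such a flat) or via the acylindrical hyperbolicity/contracting-geodesic behavior that rank one isometries produce. Concretely: pick $p$ in the interior of such a simplex, note $H_\Omega(g^np,p)$ grows linearly while the flat gives a direction orthogonal to the $g$-axis displaying non-hyperbolic behavior, contradicting that $g$ acts as a contracting isometry (cf. Theorem~\ref{thm:char_of_rank_one}(2), which says $(\ell_g^+,w)\cup(w,\ell_g^-)\subset\Omega$ for all $w\in\partial\Omega$, a strong convexity statement incompatible with a $2$-flat abutting the axis endpoints).

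Given that every maximal Abelian subgroup of $\Lambda$ (and of any finite index $\Lambda^*$) is virtually cyclic, I would then argue that every infinite-order element of $\Lambda^*$ lies in $A_1(\Lambda^*)$: its centralizer $C_{\Lambda^*}(\gamma)$ is contained in the centralizer of a maximal Abelian subgroup containing $\gamma$, which by the previous step is virtually $\Zb$; combined with Theorem~\ref{thm:centralizers_act_cocpctly} (cocompact action, hence finite generation and the virtual-$\Zb$ structure is inherited) this shows $C_{\Lambda^*}(\gamma)$ is virtually $\Zb$, so $\gamma\in A_1(\Lambda^*)$. The finitely many torsion elements are handled by passing to a torsion-free finite index subgroup (Selberg's lemma), which exists since $\Lambda$ is finitely generated linear. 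Thus $\Lambda^*\subset A_1(\Lambda^*)$, giving $r(\Lambda^*)\le 1$ and hence $\mathrm{rank}_{\mathrm{PS}}(\Lambda)\le 1$. Together with the lower bound this yields $\mathrm{rank}_{\mathrm{PS}}(\Lambda)=1$.

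The main obstacle I anticipate is the step ruling out rank-$2$ maximal Abelian subgroups in the presence of a rank one isometry: one must convert the "combinatorial" rank one condition $s_{\partial\Omega}(\ell_g^+,\ell_g^-)>2$ into an obstruction to an invariant properly embedded $2$-simplex. The cleanest route is probably to use Theorem~\ref{thm:char_of_rank_one}(2): if $A\cong\Zb^2$ (up to finite index) preserved a properly embedded $2$-simplex $S$ with vertices $v_1,v_2,v_3\in\partial\Omega$, then applying powers of $g$ and using the north--south dynamics of $g$ (Lemma~\ref{lem:NS}, valid because $g$ is rank one) one pushes $S$ toward $\ell_g^+$, forcing two of the $v_i$'s limits to be distinct points both joined to $\ell_g^+$ by segments in $\partial\Omega$, eventually contradicting $s_{\partial\Omega}(\ell_g^+,\ell_g^-)>2$ or part (2) of Theorem~\ref{thm:char_of_rank_one}. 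Making this limiting argument precise — in particular controlling where the vertices of $g^nS$ go — is the delicate part.
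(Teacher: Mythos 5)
Your reduction to ``$g$ is a rank one isometry'' and your lower bound sketch are fine in spirit (though note the lower bound $r(\Lambda)\geq 1$ is not immediate from the existence of one infinite-order element: an element outside $A_0(\Lambda)$ could still lie in a translate $\omega_j A_0(\Lambda)$, which is why the paper runs a separate contradiction argument producing a bi-proximal element whose inverse would have to be torsion). The fatal problem is in your upper bound. You propose to show that \emph{every} maximal Abelian subgroup of $\Lambda$ is virtually cyclic, so that every infinite-order element lies in $A_1(\Lambda^*)$ and hence $\Lambda^*\subset A_1(\Lambda^*)$. That statement is false in the generality of the proposition: Benoist's examples of irreducible, non-symmetric divisible domains (in dimension $4$) contain properly embedded triangles stabilized cocompactly by $\Zb^2$ subgroups, and by Theorem~\ref{thm:characterization} these same groups contain rank one isometries. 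For a generator $\gamma$ of such a $\Zb^2$, the centralizer $C_\Lambda(\gamma)$ contains $\Zb^2$, so $\gamma\notin A_1(\Lambda)$ and your inclusion $\Lambda^*\subset A_1(\Lambda^*)$ fails. The ``anticipated obstacle'' you flag --- ruling out invariant $2$-simplices in the presence of a rank one isometry --- is not merely delicate, it is impossible; Theorem~\ref{thm:char_of_rank_one}(2) constrains only the axis endpoints of $g$ itself and says nothing about flats elsewhere in $\Omega$.

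The correct mechanism, which your proposal misses, is that the Prasad--Raghunathan definition only requires covering $\Lambda$ by \emph{finitely many translates} of $A_1(\Lambda)$, not $\Lambda\subset A_1(\Lambda)$. The paper's proof (following Ballmann--Eberlein) exploits exactly this slack: using the north--south dynamics of Theorem~\ref{thm:duality} it produces four fixed elements $\varphi_1,\varphi_2,\psi_1,\psi_2$ and a disjointness lemma guaranteeing that for each $\gamma\in\Lambda$ at least one composition $\varphi_i\gamma$ or $\psi_i\gamma$ has two fixed points in $\partial\Omega$ at simplicial distance $>2$, hence is a rank one isometry by Theorem~\ref{thm:char_of_rank_one}, hence lies in $A_1(\Lambda)$ by Theorem~\ref{thm:biprox_prop}. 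This gives $\Lambda=\bigcup\varphi_i^{-1}A_1(\Lambda)\cup\bigcup\psi_i^{-1}A_1(\Lambda)$ and $r(\Lambda)\leq 1$, with no claim at all about arbitrary Abelian subgroups. You would need to replace your centralizer analysis with an argument of this covering type.
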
 

\begin{remark} The proof of Proposition~\ref{prop:PR_rank_one} is a simple modification of Ballmann and Eberlein's~\cite{BE1987} proof of the analogous statement for $\CAT(0)$ groups. 
\end{remark}

The rest of the section is devoted to the proof of Proposition~\ref{prop:PR_rank_one}. So suppose $\Omega \subset \Pb(\Rb^d)$, $\Gamma \leq \Aut(\Omega)$, and $g \in \Gamma$ satisfy the hypothesis of the proposition. By Corollary~\ref{cor:selberg} it is enough to fix a finite index subgroup $\Gamma^* \subset \Gamma$ and  show that $r(\Gamma^*) \leq 1$. Also by replacing $g$ with a sufficiently large power we may assume that $g \in \Gamma^*$.

\begin{lemma}\label{lem:disjointness} Suppose that $x_1, x_2 \in \partial \Omega$ and $(x_1,x_2) \subset \Omega$. If $A, B \subset \partial \Omega$ are open sets with $\overline{A} \cap \overline{B} = \emptyset$, then we can find disjoint neighborhoods $V_1,V_2$ of $x_1,x_2$ such that: for each $\varphi \in \Aut(\Omega)$ at least one of the following occurs
\begin{enumerate}
\item $\varphi(V_1) \cap A =\emptyset$,
\item $\varphi(V_1) \cap B =\emptyset$,
\item $\varphi(V_2) \cap A =\emptyset$,
\item $\varphi(V_2) \cap B =\emptyset$.
\end{enumerate}
\end{lemma}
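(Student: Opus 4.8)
The plan is to argue by contradiction, producing from a hypothetical bad sequence $\varphi_n \in \Aut(\Omega)$ an obstruction to proper discontinuity of the $\Lambda$-action, or more precisely a line segment forced into $\partial\Omega$ that shouldn't be there. First I would fix small compact neighborhoods $\overline{A}, \overline{B}$ with $\overline{A}\cap\overline{B}=\emptyset$ as given, and suppose for contradiction that no disjoint pair $V_1, V_2$ works. Then for every $k$, taking $V_i$ to be the ball of radius $1/k$ around $x_i$, there is $\varphi_k \in \Aut(\Omega)$ such that all four intersections are nonempty: $\varphi_k(V_i)$ meets both $A$ and $B$ for $i=1,2$. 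So I can pick points $a_i^{(k)}, b_i^{(k)} \in V_i$ with $\varphi_k(a_i^{(k)}) \in A$, $\varphi_k(b_i^{(k)}) \in B$; note $a_i^{(k)}, b_i^{(k)} \to x_i$ as $k\to\infty$.

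The key step is to pass to the limit using the structure of $\overline{\Lambda}^{\End}$ and Proposition~\ref{prop:dynamics_of_automorphisms}. After passing to a subsequence, $\varphi_k \to T$ in $\Pb(\End(\Rb^d))$ for some $T \in \overline{\Aut(\Omega)}^{\End}$ (I'd work in $\Aut(\Omega)$ since that's where the $\varphi$ live; if I need discreteness I should instead note the lemma statement only uses $\Aut(\Omega)$, and the conclusion is geometric). By Observation~\ref{obs:structure_subsets}(a) analogue (really Proposition~\ref{prop:dynamics_of_automorphisms}), $[\ker T]\cap\Omega=\emptyset$. The dichotomy is: either $x_1, x_2 \in [\ker T]$, or at least one of them, say $x_1$, is not in $[\ker T]$. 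In the latter case $T(x_1) = \lim_k \varphi_k(a_1^{(k)}) = \lim_k \varphi_k(b_1^{(k)})$, so $T(x_1)$ lies in $\overline{A}\cap\overline{B}=\emptyset$, a contradiction. So both $x_1, x_2 \in [\ker T]$. But $[\ker T]$ is a projective subspace, so it contains the whole segment $[x_1,x_2]$; combined with $[\ker T]\cap\Omega=\emptyset$ this forces $[x_1,x_2]\subset\partial\Omega$, contradicting the hypothesis $(x_1,x_2)\subset\Omega$. This yields the existence of a good pair $V_1, V_2$ for each individual choice of $A, B$; shrinking them to be disjoint and to lie inside given neighborhoods is automatic since $x_1\neq x_2$ (which holds as $(x_1,x_2)$ is a nondegenerate open segment).

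One subtlety I'd want to handle carefully: the quantification. The lemma asks for $V_1, V_2$ that work for \emph{all} $\varphi\in\Aut(\Omega)$ simultaneously, so the contradiction hypothesis genuinely gives, for the shrinking sequence $V_i = B(x_i, 1/k)$, a witness $\varphi_k$ failing all four conditions — this is what makes the compactness argument go through. The other point is ensuring $a_i^{(k)}, b_i^{(k)}$ can be taken in $\partial\Omega$ or at least that their limit is $x_i$: since $V_i$ is a neighborhood of $x_i$ in $\partial\Omega$ (the sets $A, B$ live in $\partial\Omega$), and we shrink $V_i$ to $x_i$, this is immediate. The main obstacle is organizing the limiting argument cleanly — in particular making sure the convergence $T(x_1) = \lim \varphi_k(a_1^{(k)})$ is valid, which requires $x_1 \notin [\ker T]$ together with the uniform-convergence-on-compacts statement for sequences in $\Pb(\End(\Rb^d))$, applied to the convergent sequence $a_1^{(k)} \to x_1$; this is a standard but slightly delicate "double limit" that I'd spell out. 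Everything else (disjointness, the projective-subspace observation, the final geometric contradiction) is routine.
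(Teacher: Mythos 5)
Your proposal is correct and is essentially the paper's own proof: the same contradiction setup with shrinking neighborhoods $V_{j,n}$, passage to a limit $T\in\Pb(\End(\Rb^d))$, the observation that $[\ker T]\cap\Omega=\emptyset$ forces some $x_j\notin[\ker T]$ (since otherwise the segment $[x_1,x_2]\subset[\ker T]$ would miss $\Omega$), and the final contradiction $T(x_j)\in\overline{A}\cap\overline{B}=\emptyset$ via locally uniform convergence. The "double limit" point you flag is handled exactly as you describe, by the uniform-on-compacts convergence away from $[\ker T]$.
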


\begin{proof} The following argument is essentially the proof of Lemma 3.10 in~\cite{BE1987}.

Fix a distance $d_{\Pb}$ on $\Pb(\Rb^d)$ induced by a Riemannian metric. Then for each $n$ and $j=1,2$, let $V_{j,n}$ be a neighborhood of $x_j$ whose diameter with respect to $d_{\Pb}$ is less than $1/n$.

Suppose for a contradiction that the lemma is false. Then for each $n$ there exists $\varphi_n \in \Aut(\Omega)$ such that 
\begin{align}
\label{eq:intersection}
\varphi_n(V_{j,n}) \cap A \neq \emptyset \quad \text{and} \quad \varphi_n(V_{j,n}) \cap B \neq \emptyset
\end{align}
for $j=1,2$. By passing to a subsequence we can suppose that 
\begin{align*}
T : = \lim_{n \rightarrow \infty} \varphi_n
\end{align*}
exists in $\Pb(\End(\Rb^d))$. Then
\begin{align*}
T(u) = \lim_{n \rightarrow \infty} \varphi_n(u)
\end{align*}
for all  $u \in \Pb(\Rb^d) \setminus \Pb(\ker T)$. Moreover, the convergence is uniform on compact subsets of $\Pb(\Rb^d) \setminus \Pb(\ker T)$.

Proposition~\ref{prop:dynamics_of_automorphisms} implies that $\Pb(\ker T) \cap \Omega = \emptyset$. Then, since $(x_1,x_2) \subset \Omega$, it is impossible for both $x_1,x_2$ to be contained in $\Pb(\ker T)$. So after possibly relabelling, we may assume that $x_1 \notin \Pb(\ker T)$. 

By Equation~\eqref{eq:intersection}, there exists sequences $a_n, b_n \in \partial \Omega$ converging to $x_1$ such that $\varphi_n(a_n) \in A$ and $\varphi_n(b_n) \in B$. Then, since $x_1 \notin \Pb(\ker T)$, 
\begin{align*}
T(x_1) = \lim_{n \rightarrow \infty} \varphi_n(a_n) \in \overline{A}
\end{align*}
and 
\begin{align*}
T(x_1) = \lim_{n \rightarrow \infty} \varphi_n(b_n) \in \overline{B}.
\end{align*}
So $T(x_1) \in \overline{A} \cap \overline{B} = \emptyset$ which is a contradiction. 
\end{proof}

\begin{lemma} $r(\Gamma^*) \leq 1$. \end{lemma}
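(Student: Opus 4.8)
The plan is to adapt the Ballmann--Eberlein argument, using the element $g$ together with Theorem~\ref{thm:duality} and Lemma~\ref{lem:disjointness} to produce finitely many translates of $A_1(\Lambda)$ covering $\Lambda$. First I would fix notation: since $g$ is bi-proximal with $(\ell_g^+,\ell_g^-)\subset\Omega$, the already-proven equivalence in Theorem~\ref{thm:biprox_prop} (namely $(4)\Leftrightarrow(1)$, hence $s_{\partial\Omega}(\ell_g^+,\ell_g^-)>2$ — note $s_{\partial\Omega}=1$ would force $[\ell_g^+,\ell_g^-]\subset\partial\Omega$ and $s_{\partial\Omega}=2$ would force a midpoint on the boundary giving three fixed points, so in fact $s_{\partial\Omega}(\ell_g^+,\ell_g^-)>2$ as $(\ell_g^+,\ell_g^-)\subset\Omega$). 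More carefully: the hypothesis $(\ell_g^+,\ell_g^-)\subset\Omega$ together with Theorem~\ref{thm:biprox_prop} shows $g$ has exactly two fixed points in $\partial\Omega$ and $s_{\partial\Omega}(\ell_g^+,\ell_g^-)>2$, so $g$ is a rank one isometry. Thus I may apply Theorem~\ref{thm:duality} and Theorem~\ref{thm:char_of_rank_one}.

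Next, following Ballmann--Eberlein, I would choose four small pairwise-disjoint open sets $A,B,A',B'\subset\partial\Omega$ around four points, where $A,B$ are near $\ell_g^+,\ell_g^-$, and pick a second rank one element (a conjugate or power of $g$, obtained from Zariski density as in earlier sections, or simply another bi-proximal element produced by Theorem~\ref{thm:biprox_density}) whose attracting/repelling points lie in $A',B'$ and which is again a rank one isometry with simplicial distance $>2$. Applying Lemma~\ref{lem:disjointness} to the pair $(\ell_g^+,\ell_g^-)$ and the pair $(A',B')$ yields disjoint neighborhoods $V_1\ni\ell_g^+$, $V_2\ni\ell_g^-$ such that every $\varphi\in\Aut(\Omega)$ misses $A'$ or $B'$ on one of $V_1,V_2$. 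Shrinking, I arrange $V_1\subset A$, $V_2\subset B$. By Lemma~\ref{lem:NS} applied to $g$ with the neighborhoods $V_1,V_2$, there is $N$ so that for $n\geq N$ the maps $g^{\pm n}$ have the ``north-south'' property moving $\overline\Omega\setminus V_2$ into $V_1$ and $\overline\Omega\setminus V_1$ into $V_2$; I fix such an $n$ and set $h=g^n$.

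The crux is then a pigeonhole/ping-pong argument showing: for any $\varphi\in\Lambda$, there is a bounded power $h^k\varphi$ (with $k$ ranging over a fixed finite set, say $|k|\leq$ some bound, or $k\in\{-1,0,1\}$ after more care) whose centralizer-relevant dynamics forces $h^k\varphi\in A_1(\Lambda)$. Concretely: given $\varphi$, by the alternative in Lemma~\ref{lem:disjointness} at least one of $\varphi(V_1)\cap A'=\emptyset$, $\varphi(V_1)\cap B'=\emptyset$, $\varphi(V_2)\cap A'=\emptyset$, $\varphi(V_2)\cap B'=\emptyset$ holds; combining this with the north-south property of $h^{\pm n}$ one sees that for an appropriate choice of sign $\varepsilon\in\{+1,-1\}$ the element $\psi:=h^{\varepsilon n}\varphi h^{-\varepsilon n}$ (or a one-sided variant $h^{\varepsilon n}\varphi$) maps one of $A,B$ strictly inside itself, hence has an attracting fixed point there and — being a product of automorphisms with the right contraction — is itself a rank one isometry, whence by Theorem~\ref{thm:char_of_rank_one} and Theorem~\ref{thm:biprox_prop} its centralizer in $\Lambda$ contains $\psi^{\Zb}$ with finite index, i.e.\ $\psi\in A_1(\Lambda)$. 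Conjugation-invariance of $A_1(\Lambda)$ then gives $\varphi\in A_1(\Lambda)$ up to multiplying by one of the finitely many elements $h^{\pm n}$, so $\Lambda\subset\bigcup_{j}\gamma_j A_1(\Lambda)$ for a finite set $\{\gamma_j\}$, proving $r(\Lambda)=1$. The main obstacle I anticipate is the bookkeeping in this last step: making precise which translate $h^{k}\varphi$ to use in each of the four cases of Lemma~\ref{lem:disjointness}, and verifying in each case that the resulting element genuinely has the contraction property on a small boundary neighborhood (so that it is bi-proximal with well-separated fixed points), rather than merely some weaker attracting behavior — this is exactly where the $\overline A'\cap\overline B'=\emptyset$ separation and the strictness in Theorem~\ref{thm:duality} must be used carefully.
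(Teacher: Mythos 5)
Your overall strategy (the Ballmann--Eberlein covering argument) is the right one and is the same route the paper takes, but as written there is a genuine gap in the ping-pong mechanism. You apply Lemma~\ref{lem:disjointness} with target sets $A',B'$ near the fixed points of a second rank one element, and then try to close the argument by composing $\varphi$ with powers of $g$ via Lemma~\ref{lem:NS}. This cannot work: the dichotomy tells you that $\varphi(V_i)$ misses $A'$ or $B'$, but missing a small set near the \emph{other} element's fixed points gives no control on whether $\varphi(V_i)$ lands in the basin $\overline{\Omega}\setminus V_2$ (resp.\ $\overline{\Omega}\setminus V_1$) that $g^{\pm n}$ contracts, so the composition $g^{\pm n}\varphi$ need not map any ball strictly into itself. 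What is needed --- and what the paper does --- is four auxiliary elements $\varphi_1,\varphi_2,\psi_1,\psi_2$, one per case of Lemma~\ref{lem:disjointness}, each mapping $\partial\Omega\setminus A$ (resp.\ $\partial\Omega\setminus B$) into a small ball $U_1\subset V_1$ (resp.\ $U_2\subset V_2$). These are supplied by Theorem~\ref{thm:duality} applied to the pairs $(\ell_g^\pm,\ell_h^\pm)$ with $h=\phi g\phi^{-1}$, which is legitimate because $s_{\partial\Omega}(\ell_g^\pm,\ell_h^\pm)=\infty$ by Theorem~\ref{thm:char_of_rank_one}(3). Then in the case $\gamma(V_1)\cap A=\emptyset$ one literally gets $\varphi_1\gamma(U_1)\subsetneq U_1$ and $(\varphi_1\gamma)^{-1}(\partial\Omega\setminus V_1)\subset\partial\Omega\setminus V_1$, producing via Brouwer two fixed points at simplicial distance $>2$.

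Two further steps are missing. First, even with the two fixed points in hand, Theorem~\ref{thm:char_of_rank_one} yields bi-proximality only under the extra hypothesis $\inf_p H_\Omega(p,\varphi_1\gamma\cdot p)>0$; the paper excludes the degenerate alternative by noting that zero translation length, via Proposition~\ref{prop:min_trans_compute}, would force $\varphi_1\gamma$ to generate a relatively compact group, and a fully supported invariant measure on $\partial\Omega$ contradicts the strict inclusion $\varphi_1\gamma(U_1)\subsetneq\varphi_1\gamma(V_1)\subset U_1$. You flag this worry but do not resolve it. Second, covering $\Lambda$ by finitely many translates of $A_1(\Lambda)$ only gives $r(\Lambda)\le 1$; by the definition of $r$ you must still rule out $r(\Lambda)=0$, and the paper devotes the last part of the proof to this (adapting the choice of $A$ and $V_1$ to hypothetical translating elements $\omega_1,\dots,\omega_m$ and showing some $\varphi_1\omega_j$ would be bi-proximal yet lie in $\omega_j^{-1}\varphi_1^{-1}\in A_0(\Lambda)$ up to inversion, hence have finite order). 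Your proposal ends at the upper bound.
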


\begin{proof} The following argument is essentially the proof of Theorem 3.1 in~\cite{BE1987}. 

Since $\Gamma$ is strongly irreducible, $\Gamma^*$ is also strongly irreducible. So by Observation~\ref{obs:str_irred_transverse} there exists $\phi \in \Gamma^*$ such that 
\begin{align*}
\phi \ell_g^+, \phi \ell_g^-, \ell_g^+, \ell_g^-
\end{align*}
are all distinct. Then $h := \phi g \phi^{-1}$ is bi-proximal, $\ell^\pm_h = \phi \ell^\pm_g$, and 
\begin{align*}
(\ell_h^+, \ell_h^-) = \phi (\ell_g^+, \ell_g^-) \subset \Omega. 
\end{align*}

Fix open neighborhoods $A,B \subset \partial \Omega$ of $\ell_h^+, \ell_h^-$ such that $\overline{A} \cap \overline{B} = \emptyset$. Then let $V_1, V_2 \subset \partial \Omega$ be neighborhoods of $\ell_g^+, \ell_g^-$ so that $A,B, V_1, V_2$ satisfy Lemma~\ref{lem:disjointness}. 

By further shrinking each $V_j$, we can assume that each $\partial \Omega \setminus V_j$ is homeomorphic to a closed ball. 

Next let $U_1 \subset V_1$ be a closed neighborhood of $\ell_g^+$ such that: if $x \in U_1$ and $y \in \partial \Omega \setminus V_1$, then $s_{\partial \Omega}(x,y) > 2$. Such a choice is possible by Theorem~\ref{thm:char_of_rank_one} part (2). In a similar fashion, let $U_2 \subset V_2$ be a closed neighborhood of $\ell_g^-$ such that: if $x \in U_2$ and $y \in \partial \Omega \setminus V_2$, then $s_{\partial \Omega}(x,y) > 2$. 

By further shrinking each $U_j$, we can assume that each $U_j$ is homeomorphic to a closed ball. 

By Observation~\ref{obs:fixed pts of proximal elements are extreme} each $\ell^\pm_g, \ell^\pm_h$ is an extreme point of $\Omega$. Further, by Theorem~\ref{thm:char_of_rank_one} part (3) 
\begin{align*}
s_{\partial \Omega}(\ell_g^{\pm}, \ell_h^{\pm}) = \infty = s_{\partial \Omega}(\ell_g^{\pm}, \ell_h^{\mp}).
\end{align*}
So by Theorem~\ref{thm:duality} there exists $\varphi_1, \varphi_2, \psi_1,\psi_2 \in \Gamma^*$ such that 
\begin{enumerate}
\item $\varphi_1(\partial \Omega \setminus A) \subset U_1$ and $\varphi_1^{-1}(\partial \Omega \setminus U_1) \subset A$
\item $\psi_1(\partial \Omega \setminus A) \subset U_2$ and $\psi_1^{-1}(\partial \Omega \setminus U_2) \subset A$
\item $\varphi_2(\partial \Omega \setminus B) \subset U_1$ and $\varphi_2^{-1}(\partial \Omega \setminus U_1) \subset B$
\item $\psi_2(\partial \Omega \setminus B) \subset U_2$ and $\psi_2^{-1}(\partial \Omega \setminus U_2) \subset B$.
\end{enumerate}

We claim that 
\begin{align*}
\Gamma^* =  \varphi_1^{-1} A_1(\Gamma^*) \cup \psi_1^{-1} A_1(\Gamma^*) \cup \varphi_2^{-1} A_1(\Gamma^*) \cup \psi_2^{-1} A_1(\Gamma^*). 
\end{align*}
Fix $\gamma \in \Gamma^*$. By construction at least one of the four possibilities in Lemma~\ref{lem:disjointness} must occur. 

\medskip

\noindent \underline{Case 1:} Assume $\gamma(V_1) \cap A = \emptyset$. Then 
\begin{align}
\label{eq:strict_inequality}
\varphi_1\gamma(U_1) \subsetneq \varphi_1\gamma(V_1) \subset  \varphi_1(\partial \Omega \setminus A) \subset U_1
\end{align}
So by the Brouwer fixed point theorem, $\varphi_1\gamma$ has a fixed point in $x \in U_1$ (recall that $U_1$ is homeomorphic to a closed ball). Further, 
\begin{align*}
(\varphi_1\gamma)^{-1}(\partial \Omega \setminus V_1)\subset (\varphi_1\gamma)^{-1}(\partial \Omega \setminus U_1) \subset \gamma^{-1}(A) \subset \partial \Omega \setminus V_1.
\end{align*}
So $\varphi_1\gamma$ also has a fixed point in $y \in \partial \Omega \setminus V_1$. Now by construction $s_{\partial \Omega}(x,y) > 2$. So by Theorem~\ref{thm:char_of_rank_one} part (1) either 
\begin{align*}
\inf_{p \in \Omega} H_\Omega(\varphi_1\gamma(p), p) = 0
\end{align*}
or $\varphi_1 \gamma$ is bi-proximal with 
\begin{align*}
\{x,y\} = \{ \ell_{\varphi_1 \gamma}^+, \ell_{\varphi_1 \gamma}^-\}.
\end{align*}
In the latter case, $(\ell_{\varphi_1 \gamma}^+, \ell_{\varphi_1 \gamma}^-) \subset \Omega$ and so $\varphi_1 \gamma \in A_1(\Gamma)$ by Observation~\ref{obs:centralizer of rank one isometry}. Thus we have reduced to showing that
\begin{align*}
\inf_{p \in \Omega} H_\Omega(\varphi_1\gamma(p),p) > 0.
\end{align*}
Assume for a contradiction that 
\begin{align*}
\inf_{p \in \Omega} H_\Omega(\varphi_1\gamma(p),p) = 0.
\end{align*}
Then by Proposition~\ref{prop:min_trans_compute} we have 
\begin{align*}
\lambda_1(\varphi_1\gamma) = \lambda_2(\varphi_1\gamma) = \cdots = \lambda_d(\varphi_1\gamma).
\end{align*}
Since $x$ and $y$ are eigenlines of $\varphi_1\gamma$ this implies that $\varphi_1\gamma$ fixes every point of the line $(x,y)$. Then, since $\Aut(\Omega)$ acts properly on $\Omega$ and $\Gamma^*$ is discrete,  the group 
\begin{align*}
K = \left\{  (\varphi_1\gamma)^n : n \in \Zb\right\}
\end{align*} 
is finite. So $(\varphi_1\gamma)^N = \id$ for some large $N$. Then Equation~\eqref{eq:strict_inequality} implies that 
\begin{align*}
U_1 = (\varphi_1\gamma)^N(U_1) \subsetneq U_1.
\end{align*}
So we have a contradiction and hence 
\begin{align*}
\inf_{p \in \Omega} H_\Omega(\varphi_1\gamma(p),p) > 0
\end{align*}
and so $\varphi_1 \gamma \in A_1(\Gamma^*)$.
\medskip

\noindent \underline{Case 2:} Assume $\gamma(V_1) \cap B = \emptyset$. Then arguing as in Case 1 shows that $\varphi_2 \gamma \in A_1(\Gamma^*)$.

\medskip

\noindent \underline{Case 3:} Assume $\gamma(V_2) \cap A = \emptyset$. Then arguing as in Case 1 shows that $\psi_1 \gamma \in A_1(\Gamma^*)$.

\medskip

\noindent \underline{Case 4:} Assume $\gamma(V_2) \cap B = \emptyset$. Then arguing as in Case 1 shows that $\psi_2 \gamma \in A_1(\Gamma^*)$.

\medskip 

Since $\gamma \in \Gamma^*$ was arbitrary, we see that 
\begin{align*}
\Gamma^* = \varphi_1^{-1} A_1(\Gamma^*) \cup \psi_1^{-1} A_1(\Gamma^*) \cup \varphi_2^{-1} A_1(\Gamma^*) \cup \psi_2^{-1} A_1(\Gamma^*). 
\end{align*}
Hence $r(\Gamma^*)\leq 1$.
\end{proof}

\section{Proof of Theorem~\ref{thm:characterization}}\label{sec:pf_of_char}

Suppose for the rest of the section that $\Omega \subset \Pb(\Rb^d)$ is an irreducible properly convex domain and $\Gamma \leq \Aut(\Omega)$ is a discrete group that acts co-compactly on $\Omega$. We will show that the following conditions are equivalent: 

\begin{enumerate}
\item $\Omega$ is symmetric with real rank at least two.
\item $\Omega$ has higher rank.
\item \label{char:line_between_ext_pts} The extreme points of $\Omega$ form a closed proper subset of $\partial \Omega$.
\item $[x_1,x_2] \subset \partial \Omega$ for every two extreme points $x_1,x_2 \in \partial \Omega$.
\item $s_{\partial\Omega}(x,y) \leq 2$ for all $x,y \in \partial \Omega$.
\item $s_{\partial\Omega}(x,y) < +\infty$ for all $x,y \in \partial \Omega$.
\item $\Gamma$ has higher rank in the sense of Prasad-Raghunathan. 
\item For every $g \in \Gamma$ with infinite order the cyclic group $g^{\Zb}$ has infinite index in the centralizer $C_\Gamma(g)$ of $g$ in $\Gamma$.
\item Every $g \in \Gamma$ with infinite order has at least three fixed points in $\partial \Omega$.
\item\label{char:line_between_attracting_repelling} $[\ell_g^+, \ell_g^-] \subset \partial \Omega$ for every bi-proximal element $g \in \Gamma$.
\item $s_{\partial\Omega}(\ell_g^+, \ell_g^-) < +\infty$ for every bi-proximal element $g \in \Gamma$.
\item There exists a boundary face $F \subset \partial \Omega$ such that 
\begin{align*}
F \cap \overline{\Ec_\Omega} = \emptyset.
\end{align*}

\end{enumerate}

{\setlength\mathsurround{0pt}

\begin{figure}
\begin{center}
\begin{tikzcd}[row sep=4 em, column sep=4em]
   (4) \arrow[Leftarrow]{rrrr}{Lem.~\ref{lem:10implies4}} \arrow[Rightarrow]{dr}{Defn.} \arrow[Rightarrow]{ddddd}{Lem.~\ref{lem:4implies12}} & & &  & (10) \arrow[Leftrightarrow]{r}{Cor.~\ref{cor:biprox_prop}} \arrow[Leftarrow]{dddd}{Thm.~\ref{prop:PR_rank_one}} &  (11)  \\
    & (6) \arrow[Rightarrow]{r}{Lem~\ref{lem:6implies8}} \arrow[Leftarrow]{d}{Defn.} &   (8) \arrow[Leftrightarrow]{r}{Thm.~\ref{thm:fixed_pts_versus_centralizers}} & (9) \arrow[Rightarrow]{ur}{Cor.~\ref{cor:biprox_prop}} \\
    & (5)\arrow[Leftarrow]{d}{Lem.~\ref{lem:2implies5}}\\
    & (2)\arrow[Leftarrow]{d}{Lem.~\ref{lem:1implies23}}\\
    & (1) \arrow[Leftarrow, swap]{ld}{Thm.~\ref{thm:main_rigidity_result}} \arrow[Rightarrow]{rd}{Lem.~\ref{lem:1implies23}} \arrow[Rightarrow]{rrr}{Thm.~\ref{thm:PS_uniform_lattices}} & & & (7) \\
    (12) \arrow[Leftarrow]{rr}{Defn.} & & (3) 
  \end{tikzcd}
\caption{The proof of Theorem~\ref{thm:characterization}}\label{fig:pf_of_main_thm}
  \end{center}
\end{figure} 
}

We verify all the implications shown in Figure~\ref{fig:pf_of_main_thm}. First notice that the implications $(3) \Rightarrow (12)$, $(4) \Rightarrow (6)$, and $(5) \Rightarrow (6)$ are by definition. The implication $(1) \Rightarrow (7)$ is due to Prasad-Raghunathan, see Theorem~\ref{thm:PS_uniform_lattices} above. Theorem~\ref{prop:PR_rank_one} implies that $(7) \Rightarrow (10)$. Theorem~\ref{thm:fixed_pts_versus_centralizers} implies that $(8) \Leftrightarrow (9)$. Corollary~\ref{cor:biprox_prop} implies that  $(9) \Rightarrow (10)$ and $(10) \Leftrightarrow (11)$. Theorem~\ref{thm:main_rigidity_result}  implies that $(12) \Rightarrow (1)$. The remaining implications in Figure~\ref{fig:pf_of_main_thm} are given as lemmas below. 

\begin{lemma}\label{lem:1implies23} $(1) \Rightarrow (2)$ and $(1) \Rightarrow (3)$. \end{lemma} 

\begin{proof} These implications follow from direct inspection of the short list of irreducible symmetric properly convex domains.
\end{proof}

\begin{lemma}\label{lem:2implies5} $(2) \Rightarrow (5)$. \end{lemma} 

\begin{proof} Suppose $x,y \in \partial \Omega$. If $[x,y] \subset \partial \Omega$, then $s_{\partial\Omega}(x,y) \leq 1$. If $(x,y) \subset \Omega$, then there exists a properly embedded simplex $S \subset \Omega$ with $\dim(S) \geq 2$ and $(x,y) \subset S$. Then 
\begin{align*}
s_{\partial \Omega}(x,y) \leq s_{\partial S}(x,y) \leq 2.
\end{align*}
Since $x,y \in \partial \Omega$ were arbitrary we see that $(5)$ holds. 
\end{proof}

\begin{lemma}\label{lem:4implies12} $(4) \Rightarrow (12)$. \end{lemma} 

\begin{proof} Fix a boundary face $F \subset \partial \Omega$ of maximal dimension. We claim that
\begin{align*}
\overline{\Ec_\Omega} \cap F = \emptyset.
\end{align*}
Otherwise there exists $x \in F$ and a sequence $x_n \in \Ec_\Omega$ such that $x_n \rightarrow x \in F$. Now fix an extreme point $y \in \partial \Omega \setminus \overline{F}$. Then, by hypothesis, $[x_n,y] \subset \partial\Omega$ for all $n$, so $[x,y] \subset \partial \Omega$. 

Fix $z \in (x,y) \subset \partial \Omega$ and let $C$ denote the convex hull of $y$ and $F$. Then by Observation~\ref{obs:faces}
\begin{align*}
\partial \Omega \supset F_\Omega(z) \supset \relint(C).
\end{align*}
Then 
\begin{align*}
\dim F_\Omega(z) > \dim F
\end{align*}
which is a contradiction. So we must have $\overline{\Ec_\Omega} \cap F = \emptyset$ and hence $(12)$ holds.

\end{proof} 

\begin{lemma}\label{lem:6implies8} $(6) \Rightarrow (8)$. \end{lemma} 

\begin{proof} By Theorem~\ref{thm:max_abelian} every infinite order element $g \in \Gamma$ preserves a properly embedded simplex $S \subset \Omega$ with $\dim(S) \geq 1$. Hence $g$ fixes the vertices $v_1,\dots, v_k$ of $S$. By hypothesis $s_{\partial \Omega}(v_1,v_2) <+\infty$ and hence, by Theorem~\ref{thm:fixed_pts_versus_centralizers}, $g^{\Zb}$ has infinite index in the centralizer $C_\Gamma(g)$. 

\end{proof} 

\begin{lemma}\label{lem:10implies4} $(10) \Rightarrow (4)$. \end{lemma} 

\begin{proof} We prove the contrapositive: if there exist extreme points $x,y \in \partial \Omega$ with $(x,y) \subset \Omega$, then there exists a bi-proximal element $g \in \Gamma$ with $(\ell_g^+,\ell_g^-) \subset \Omega$. If such $x,y$ exist, then by Theorem~\ref{thm:biprox_density}  there exist bi-proximal elements $g_n \in \Gamma$ with $\ell^+_{g_n} \rightarrow x$ and $\ell^-_{g_n} \rightarrow y$. Then for $n$ large we must have $(\ell^+_{g_n}, \ell^-_{g_n}) \subset \Omega$. \end{proof}

\bibliographystyle{alpha}
\bibliography{geom}

\end{document}